\newcommand{\CC}{{\mathbb C}}
\newcommand{\bH}{{\mathbb H}}
\newcommand{\cE}{{\mathcal E}}
\newcommand{\cS}{{\mathcal S}}
\newcommand{\cF}{{\mathcal F}}
\newcommand{\cG}{{\mathcal G}}
\newcommand{\cP}{{\mathcal P}}
\newcommand{\cV}{{\mathcal V}}
\newcommand{\cZ}{{\mathcal Z}}
\newcommand{\cO}{{\mathcal O}}
\newcommand{\cU}{{\mathcal U}}
\newcommand {\PP}{\mathbb{P}}
\newcommand{\cL}{{\mathcal L}}
\newcommand {\ZZ}{\mathbb{Z}}
\newcommand {\NN}{\mathbb{N}}
\newcommand{\deb}{\overline{\partial}}
\DeclareMathOperator{\Gm}{\mathbb{G}_{\text{m}}}
\DeclareMathOperator{\Syz}{Syz}
\DeclareMathOperator{\EExt}{{\cE}xt}
\DeclareMathOperator{\Spl}{Spl}
\DeclareMathOperator{\SSpl}{{\cS}pl}
\DeclareMathOperator{\Pic}{Pic}
\DeclareMathOperator{\End}{End}
\DeclareMathOperator{\ext}{Ext}
\DeclareMathOperator{\tr}{tr}
\DeclareMathOperator{\rank}{rank}
\DeclareMathOperator{\id}{id}
\DeclareMathOperator{\Ext}{Ext}
\DeclareMathOperator{\Aut}{Aut}
\newtheorem{theorem}{Theorem}[section]
\newtheorem{lemma}[theorem]{Lemma}
\newtheorem{question}[theorem]{Question}
\newtheorem{proposition}[theorem]{Proposition}
\newtheorem{corollary}[theorem]{Corollary}
 \theoremstyle{definition}
\newtheorem{definition}[theorem]{Definition}
\newtheorem{example}[theorem]{Example}
\newtheorem{remark}[theorem]{Remark}
\newtheorem{Notation}[theorem]{Notation}
\title[Lagrangian subspaces]{Lagrangian subspaces of the moduli space of simple sheaves on K3 surfaces}
\begin{document}

 \author[B.\ Fantechi]{Barbara Fantechi} 
 \address{SISSA
Via Bonomea 265, I-34136 Trieste, Italy}
  \email{fantechi@sissa.it, 
  ORCID 0000-0002-7109-6818}.
  \author[R.\ M.\ Mir\'o-Roig]{Rosa M.\ Mir\'o-Roig} 
  \address{Facultat de
  Matem\`atiques i Inform\`atica, Universitat de Barcelona, Gran Via des les
  Corts Catalanes 585, 08007 Barcelona, Spain} \email{miro@ub.edu, ORCID 0000-0003-1375-6547}

\thanks{The first author has been partially supported by PRIN 2017 Moduli theory and
birational classification.} 
\thanks{The second author has been partially supported by the grant PID2019-104844GB-I00}

\begin{abstract} Let $X$ be a  K3 surface and let $\Spl(r;c_1,c_2)$ be the moduli space of simple sheaves on $X$ of fixed rank $r$ and Chern classes $c_1$ and $c_2$.  Under suitable assumptions, to a pair $(F,W)$ (respectively, $(F,V)$) where $F\in \Spl(r;c_1,c_2)$ and $W\subset H^0(F)$ (resp.~$V^*\subset H^1(F^*)$) is a vector subspace, we  associate a simple syzygy bundle (resp.~extension bundle) on $X$.

We show that both syzygy bundles and extension bundles can be constructed in families and that the induced morphism to a different component of the moduli of simple sheaves is a locally closed embedding.

We show that this construction associates to every Lagrangian (resp.~isotropic) algebraic subspace of $\Spl(r;c_1,c_2)$ an induced Lagrangian (resp.~isotropic) algebraic subspace of a different component of the moduli of simple sheaves.
\begin{comment}
    Generalized  syzygy bundles (respectively, extension bundles) induce a morphism $\alpha $ (respectively, $\beta$) from an open set $\cG_U^0$ (respectively, $\cP_{U_e}$) in a Grassmann bundle over an open subspace $U$ (respectively, $U_e$) of the moduli space $\Spl(r;c_1,c_2)$ to another moduli space $\Spl(r';c_1',c_2')$ (respectively, $\Spl (\overline{r};c_1,c_2)$). 

Our goal is to show that $\alpha$ and $\beta $ are locally closed embeddings and under the morphism $\alpha $ (respectively, $\beta $) isotropic locally closed subspaces of $\Spl(r;c_1,c_2)$ correspond to locally closed isotropic subspaces of $\Spl(r';c_1',c_2')$ (respectively, $\Spl (\overline{r};c_1,c_2)$). In particular, Lagrangian locally closed subspaces of $\Spl(r;c_1,c_2)$ map to Lagrangian locally closed subspaces of $\Spl(r';c_1',c_2')$ (respectively, $\Spl (\overline{r};c_1,c_2)$).

\end{comment}

\end{abstract}

\maketitle

\tableofcontents

\section{Introduction}
A smooth complex variety (or more generally algebraic space) is called holomorphic symplectic if it has a nowhere degenerate closed holomorphic 2-form. In 1984, S.~Mukai 
showed the existence of a natural non-degenerate closed holomorphic 2-form on the  moduli space $\Spl(r;c_1,c_2)$ of simple sheaves on a K3 surface with fixed rank and Chern classes \cite[Theorem 0.1]{Mu}, thus providing a whole series of examples of  holomorphic symplectic algebraic spaces.

In this paper, by K3 surface we always mean a smooth, connected, complex projective K3 surface $X$. We will present two natural ways to construct  isotropic and Lagrangian subspaces (i.e.  isotropic irreducible subspaces of maximal dimension) in the moduli space of simple bundles on a K3 surface, starting from other suitably chosen isotropic/Lagrangian subspaces. By subspace of an algebraic space we always mean "smooth, irreducible, locally closed algebraic subspace" throught this paper. As starting point for the construction, we remark that points in $\Pic(X)$ are Lagrangian subspaces.

The first method uses generalized syzygy bundles. Given a simple vector bundle $F$ on a K3 surface, and a vector subspace $W\subset H^0(F)$ of dimension $w\ge 2+rk (F)$, the {\em generalized syzygy} bundle $S$ associated to $(F,W)$ is the kernel of the evaluation map $W\otimes \cO_X \twoheadrightarrow F$, when the latter is surjective (see Definition \ref{def1}).

In \cite{FM},  for any K3 surface $X$ and for any $w$ we define the moduli space of generalized syzygy bundles $\cG_U^0$  to be the natural structure on pairs $(F,W)$ where $F\in \Spl(r;c_1,c_2)$ is a simple vector bundle such that $H^1(F)=0$  and $W\subset H^0(F)$ is a generating $w$-dimensional space. We show that any generalized syzygy bundle so obtained is simple of rank $r'=w-r$ and Chern classes $c_1'=-c_1$, and $c_2'=c_1^2-c_2$.

 We also show that the set maps $(F,W)\mapsto F$ and $(F,W)\mapsto S$ are induced by morphisms $\pi:\cG^0_U\to \Spl(r;c_1,c_2)$ and $\alpha:\cG^0_U\to \Spl(r';c_1',c_2')$.
The morphism $\pi:\cG^0_U\to \Spl(r;c_1,c_2)$ is smooth and \'etale locally quasiprojective over its image $U$. In fact, it is locally open in a Grassmann bundle.  Finally, we show that the morphism $\alpha :\cG^0_U \to \Spl(r';c_1',c_2')  $ is a locally closed embedding.

For every $\cL\subset \Spl(r;c_1,c_2)$ locally closed subspace, we let $\cG^0_{\cL}:=\pi^{-1}(\cL\cap U)$. The restriction of $\alpha $ to $\cG^0_{\cL}$ induces a locally closed embedding $\alpha :\cG_{\cL}^0\to \Spl(r';c_1',c_2')$ with image $\Syz^w_\cL$.
Our main result about moduli of generalized syzygy bundles is the following.

\begin{theorem}
\label{mainthm1}
Let $\cL\subset \Spl(r;c_1,c_2)$ be a smooth, irreducible, locally closed subspace and $w\ge 2 + r$ an integer. Then the  locally closed embedding $$\alpha : \cG^0_{\cL}\to \Spl(r';c_1',c_2') 
$$
is isotropic (respectively, Lagrangian) if and only if $\cL$ is isotropic (respectively, Lagrangian) in $\Spl(r;c_1,c_2)$. 
\end{theorem}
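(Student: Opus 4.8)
The plan is to deduce both equivalences from a single comparison between the two Mukai symplectic forms. Write $\sigma$ and $\sigma'$ for the Mukai forms on $\Spl(r;c_1,c_2)$ and $\Spl(r';c_1',c_2')$ respectively; recall that at a point $[E]$ the tangent space is $\Ext^1(E,E)$ and the form is $(a,b)\mapsto \tr(a\circ b)\in H^2(\cO_X)\cong\CC$. Since $\alpha$ is a locally closed embedding, $\cG^0_\cL$ maps to an isotropic (resp.\ Lagrangian) subspace exactly when $\alpha^*\sigma'$ vanishes on $\cG^0_\cL$ (resp.\ and the image is half-dimensional). The first step I would carry out is therefore to prove the key identity
\begin{equation}
\alpha^{*}\sigma' \;=\; \pm\,\pi^{*}\sigma
\qquad\text{as $2$-forms on $\cG^{0}_{U}$.}
\tag{$\star$}
\end{equation}

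To establish $(\star)$ I would work pointwise at $(F,W)\in\cG^0_U$ with the defining sequence $0\to S\to W\otimes\cO_X\to F\to 0$. Smoothness of $\pi$ gives the exact sequence of tangent spaces
\begin{equation*}
0 \to \operatorname{Hom}\!\big(W,\,H^{0}(F)/W\big) \to T_{(F,W)}\cG^{0}_{U} \xrightarrow{\; d\pi \;} \Ext^{1}(F,F) \to 0 ,
\end{equation*}
while $d\alpha$ sends a tangent vector to the class in $\Ext^1(S,S)$ of the induced deformation of $S$. The point is that $d\pi$ and $d\alpha$ are intertwined by the long exact sequences obtained by applying $\operatorname{Hom}(-,-)$ to the defining sequence in both variables, and that the Yoneda product together with the trace are functorial for these sequences. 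Using $H^1(\cO_X)=0$ — so that the trivial bundle $W\otimes\cO_X$ contributes nothing to the relevant $\Ext^2$ and its trace — one checks that for tangent vectors $u,v$ the numbers $\tr(d\alpha(u)\circ d\alpha(v))$ and $\tr(d\pi(u)\circ d\pi(v))$ coincide up to an overall sign, which is immaterial for the isotropy and Lagrangian conditions. I expect this compatibility of trace pairings to be the main obstacle: it is where the whole statement lives, and it requires carefully tracking the connecting homomorphisms and the diagram chase through the double complex computing $\Ext^{\bullet}$ of the sequence with itself.

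Granting $(\star)$, the isotropy equivalence is immediate. Restricting to $\cG^0_\cL=\pi^{-1}(\cL\cap U)$, the morphism $\pi$ is a smooth surjection onto $\cL\cap U$, so $\pi^*\sigma$ vanishes on $\cG^0_\cL$ if and only if $\sigma$ vanishes on $\cL$; by $(\star)$ the same holds for $\alpha^*\sigma'$, which says precisely that $\alpha(\cG^0_\cL)$ is isotropic if and only if $\cL$ is. Note that $\cG^0_\cL$ is a Grassmann bundle over the smooth irreducible base $\cL\cap U$, hence smooth and irreducible, and $\alpha$ being an embedding preserves these properties, so "isotropic of maximal dimension'' makes sense for the image.

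For the Lagrangian case it remains to match half-dimensionality. Here I would use Mukai's formula $\dim\Spl=\dim\Ext^1(E,E)=2-\chi(E,E)=2+\langle v(E),v(E)\rangle$. Writing $v(S)=(w,0,w)-v(F)$ for the Mukai vectors and using $\langle v(E),v(E')\rangle=-\chi(E,E')$ together with $\chi(\cO_X)=2$ and $\chi(F)=h^0(F)=:N$ (constant on the irreducible $\cL\cap U$, using $H^1(F)=H^2(F)=0$ on $U$), a short computation gives $\langle v_H,v_H\rangle=-2w^2$ and $\langle v_H,v(F)\rangle=-wN$ with $v_H=(w,0,w)$, whence $\langle v(S),v(S)\rangle-\langle v(F),v(F)\rangle=2w(N-w)$ and so $\dim\Spl(r';c_1',c_2')=\dim\Spl(r;c_1,c_2)+2w(N-w)$. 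On the other hand $\cG^0_\cL$ is open in a Grassmann bundle of relative dimension $w(N-w)$ over $\cL\cap U$, so $\dim\cG^0_\cL=\dim\cL+w(N-w)$, and since $\alpha$ is an embedding the image has the same dimension. Comparing the two expressions, $\dim\alpha(\cG^0_\cL)=\tfrac12\dim\Spl(r';c_1',c_2')$ if and only if $\dim\cL=\tfrac12\dim\Spl(r;c_1,c_2)$. Combining this with the isotropy equivalence yields that $\alpha(\cG^0_\cL)$ is Lagrangian if and only if $\cL$ is, completing the proof.
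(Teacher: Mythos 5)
Your overall architecture is the same as the paper's: establish that $\alpha^{*}\sigma'$ and $\pi^{*}\sigma$ agree up to sign on $\cG^{0}_{U}$, deduce the isotropy equivalence from the fact that $\pi$ is a submersion of $\cG^0_\cL$ onto $\cL\cap U$ (plus density of $\cL\cap U$ in the irreducible $\cL$), and obtain the Lagrangian statement from a dimension count. Your dimension count via Mukai vectors is correct and is a clean alternative to the paper's Proposition \ref{dims}, which instead chases Euler characteristics through the defining exact sequences to get $h^1(S\otimes S^*)=2\dim Gr(w,\chi(F))+h^1(F\otimes F^*)$; both give exactly the compatibility needed to pass from ``isotropic'' to ``Lagrangian''.

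The genuine gap is at the step you yourself flag as ``where the whole statement lives'': you never prove $(\star)$. Saying that the Yoneda product and the trace are ``functorial for these sequences'' and that ``one checks'' the equality $\tr(d\alpha(u)\circ d\alpha(v))=\pm\,\tr(d\pi(u)\circ d\pi(v))$ is to assume precisely what must be proved. Concretely, writing $e\in\Ext^1(F,S)$ for the class of $0\to S\to W\otimes\cO_X\to F\to 0$ and lifting $u,v\in T_{(F,W)}\cG^0_U$ to $H^0(S^*\otimes F)=\operatorname{Hom}(S,F)$, one has $d\alpha(u)=e\circ u$ and $d\pi(u)=u\circ e$, so the identity to prove is $\tr\bigl((e\circ u)\circ(e\circ v)\bigr)=-\tr\bigl((u\circ e)\circ(v\circ e)\bigr)$, i.e.\ a graded cyclic symmetry of the trace under Yoneda composition; nothing in your sketch supplies this. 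The paper proves exactly this point (the lemma following Proposition \ref{restrict2-forms}) by an explicit Dolbeault computation: represent $e$ as $\deb s$ for a $C^\infty$ splitting $s:F\to W\otimes\cO_X$, write $(0,1)$-representatives $\sum(\phi\circ a_i)d\bar z_i$ and $\sum(a_i\circ\phi)d\bar z_i$ of the two images of a section $\phi$, and reduce everything to the pointwise matrix identity $\tr(AB)=\tr(BA)$. Your proposal contains no substitute for this computation, and the diagram chase you allude to would in the end have to reprove it. A secondary inaccuracy: you invoke $H^1(\cO_X)=0$ as an ingredient of $(\star)$, but it plays no role there --- the form comparison holds on any projective manifold as an identity of $H^2(\cO_X)$-valued $2$-forms; in the paper $H^1(\cO_X)=0$ enters only in Proposition \ref{isot1} (isotropy of the fibre directions $W^*\otimes H^1(S)$ via cup product), a statement that is in any case implied by $(\star)$ since those directions lie in $\ker d\pi$.
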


This allows us to determine when  $\Syz^w_\cL$ is an isotropic (respectively, Lagrangian) subspace of $\Spl(r';c_1',c_2')$.
We will show in Corollary \ref{FirstCoro} how this leads to the construction of infinitely many Lagrangian immersions just starting from points in $\Pic(X)$. 

\smallskip

The second construction is similar in spirit, as it associates a simple vector bundle to a pair consisting of a simple bundle $F$ on a K3 surface and a vector subspace of a cohomology group of $F$; it applies under very different assumptions on the vector bundle, namely when $H^0(F)=H^0(F^*)=0$ and $H^1(F)\ne 0$.

For such a vector bundle, let $V^*\subset H^1(F^*)$ be a  $v$-dimensional space; we can associate to it the {\em extension bundle} $E$ coming from the induced extension of $F$ by $V\otimes \cO_X$ 
(see Definition \ref{DefExt}).

We show that the {extension bundle}  $E$  is simple  of rank $\bar r=v+r$ and Chern classes $c_1$, and $c_2$.
We define the moduli space of extension bundles $\cP_{U_e}$  to be the natural moduli structure on pairs $(F,V)$ where $F\in \Spl(r;c_1,c_2)$ is a simple vector bundle such that $H^0(F)=H^0(F^*)=0$  and $V^*\subset H^1(F^*)$ is a  $v$-dimensional space.

We show that the set maps $(F,V)\mapsto F$ and $(F,V)\mapsto E$ are induced by morphisms $\pi:\cP_{U_e}\to \Spl(r;c_1,c_2)$ and $\beta:\cP_{U_e}\to \Spl(\bar r;c_1,c_2)$.

In analogy to the previous case, the morphism $\pi:\cP_{U_e} \to \Spl(r;c_1,c_2)$ is smooth and \'etale locally quasiprojective; in fact, it is locally open in a Grassmann bundle.
As a first result we prove  that the morphism $\beta :\cP_{U_e} \to \Spl(\bar r;c_1,c_2)  $ is a locally closed embedding.

For every $\cL\subset \Spl(r;c_1,c_2)$ locally closed subspace, we let $\cP_{\cL}:=\pi^{-1}(\cL\cap U_e)$. The restriction of $\beta $ to $\cP_{\cL}$ induces a locally closed embedding $\beta :\cP_{\cL}\to \Spl(\bar r;c_1,c_2) $ with image $\Ext^v_{\cL}$.
Our main result about moduli space of extension bundles is:

\begin{theorem}
\label{mainthm2}
Let $\cL\subset \Spl(r;c_1,c_2)$ be a smooth, irreducible, locally closed subspace and $v\ge 1$ an integer. Then the  locally closed embedding $$\beta : \cP_{\cL}\to \Spl(\overline{r};c_1,c_2)  
$$
is isotropic (respectively, Lagrangian) if and only if $\cL$ is isotropic (respectively, Lagrangian) in $\Spl(r;c_1,c_2)$. 
\end{theorem}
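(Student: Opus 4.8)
The plan is to prove the stronger infinitesimal statement that the two maps pull back Mukai's symplectic forms to the \emph{same} $2$-form on $\cP_{U_e}$, and then to extract the isotropic and Lagrangian assertions formally. Denote by $\sigma_r$, $\sigma_{\bar r}$ Mukai's forms on $\Spl(r;c_1,c_2)$, $\Spl(\bar r;c_1,c_2)$; at a simple sheaf $G$ each is the skew pairing $\Ext^1(G,G)\times\Ext^1(G,G)\to\Ext^2(G,G)\xrightarrow{\tr}H^2(\cO_X)\cong\CC$ given by Yoneda product followed by the trace. The central claim is
\[
\beta^*\sigma_{\bar r}=\pi^*\sigma_r\qquad\text{on }\cP_{U_e}.
\]
Granting it, the isotropic statement follows at once: as $\beta$ is a locally closed embedding, $\beta|_{\cP_\cL}$ is isotropic iff $\beta^*\sigma_{\bar r}$ vanishes on $T\cP_\cL$, hence iff $\pi^*\sigma_r$ does; and since $\pi\colon\cP_\cL\to\cL\cap U_e$ is a smooth surjection with $\operatorname{im}(d\pi)=T\cL$, the latter holds iff $\sigma_r|_{T\cL}=0$, i.e. iff $\cL$ is isotropic.

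To prove the identity I would first pin down the two differentials homologically, exploiting that the extension bundles occur in a family carrying a distinguished sub-bundle. Set $A=V\otimes\cO_X$, so that $0\to A\xrightarrow{\iota}E\xrightarrow{p}F\to 0$ with extension class the tautological inclusion $V^*\hookrightarrow H^1(F^*)$. The Grassmann-bundle structure of $\pi$ gives
\[
0\to\operatorname{Hom}(V^*,H^1(F^*)/V^*)\to T_{(F,V)}\cP_{U_e}\xrightarrow{d\pi}\Ext^1(F,F)\to 0 .
\]
Over $\cP_{U_e}$ the universal extension has sub-bundle $\mathcal V\otimes\cO$, so every deformation of $E$ coming from $d\beta$ carries along the sub-bundle $A$; since $A$ is rigid ($\Ext^1(A,A)=\End(V)\otimes H^1(\cO_X)=0$ on a K3), this means $d\beta(\xi)\circ\iota=0$, i.e. $\operatorname{im}(d\beta)$ lands in the space $\ker\!\big(\Ext^1(E,E)\xrightarrow{-\circ\iota}\Ext^1(A,E)\big)$ of filtration-preserving deformations of $E$. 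Such a deformation induces a deformation $\overline{d\beta(\xi)}\in\Ext^1(F,F)$ of the quotient $F=E/A$, and by construction of $\pi$ this induced class is exactly $d\pi(\xi)$; the vertical (fibre) directions $\operatorname{Hom}(V^*,H^1(F^*)/V^*)$ are carried by $d\beta$ isomorphically onto the remaining piece $\iota_*\Ext^1(E,A)$, on which $\overline{(-)}=0$.

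The heart of the argument, and the step I expect to be the main obstacle, is the trace computation showing $\beta^*\sigma_{\bar r}=\pi^*\sigma_r$. For filtration-preserving classes $\beta,\beta'\in\Ext^1(E,E)$ the trace of the Yoneda product is additive over the associated graded $A\oplus F$, only the graded-diagonal parts contributing; since the $A$-components lie in $\Ext^1(A,A)=0$, this collapses to
\[
\tr_E(\beta\cup\beta')=\tr_F(\bar\beta\cup\bar\beta').
\]
Applying this to $\beta=d\beta(\xi)$, $\beta'=d\beta(\eta)$ and using $\bar\beta=d\pi(\xi)$, $\bar\beta'=d\pi(\eta)$ yields $\sigma_{\bar r}(d\beta\xi,d\beta\eta)=\sigma_r(d\pi\xi,d\pi\eta)$, i.e. the desired equality; in particular the vertical directions, having $\bar\beta=0$, lie in the radical of $\beta^*\sigma_{\bar r}$, matching that they lie in that of $\pi^*\sigma_r$. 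Making the additivity of the trace over the filtration precise—tracking the off-diagonal Yoneda contributions through the exact sequences and the cyclicity of $\tr$, and checking rigorously that the universal sub-bundle forces $d\beta$ into the filtration-preserving locus with $\overline{d\beta}=d\pi$—is where the real work lies.

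Finally, for the Lagrangian statement I would combine the isotropic equivalence with a dimension count. For a simple sheaf $G$ one has $\dim\Spl=\dim\Ext^1(G,G)=2-\chi(G,G)$. Bi-additivity of the Euler pairing along $0\to A\to E\to F\to 0$ gives $\chi(E,E)-\chi(F,F)=\chi(A,A)+\chi(A,F)+\chi(F,A)=2v^2+2v\chi(F)$, using $\chi(\cO_X,\cO_X)=2$ and $\chi(F^*)=\chi(F)$ on a K3. Hence $\dim\Spl(\bar r;c_1,c_2)=\dim\Spl(r;c_1,c_2)+2d$, where $d=v\,(h^1(F^*)-v)=\dim\operatorname{Hom}(V^*,H^1(F^*)/V^*)$ is the fibre dimension of $\pi$ (with $h^1(F^*)=-\chi(F)$ by the vanishings and Riemann--Roch). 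Since $\dim\cP_\cL=\dim\cL+d$, we get $\dim\cP_\cL=\tfrac12\dim\Spl(\bar r;c_1,c_2)$ iff $\dim\cL=\tfrac12\dim\Spl(r;c_1,c_2)$. As a Lagrangian subspace is an isotropic one of half the ambient dimension, combining this with the isotropic equivalence already established shows that $\beta|_{\cP_\cL}$ is Lagrangian iff $\cL$ is Lagrangian, which completes the proof.
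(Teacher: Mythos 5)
Your proposal is correct and takes essentially the same route as the paper: your central identity $\beta^*\sigma_{\bar r}=\pi^*\sigma_r$ is exactly Proposition \ref{restrict2-formsbis}, which the paper proves by lifting tangent vectors to $H^1(F^*\otimes E)=\Ext^1(F,E)$ (equivalent, via the long exact sequence for $0\to V\otimes\cO_X\to E\to F\to 0$, to your filtration-preserving condition $d\beta(\xi)\circ\iota=0$) and invoking the trace-cyclicity Lemma \ref{eppur_commuta}, which is precisely the fact underlying your associated-graded trace computation. The deduction of the isotropic case through the submersion $\pi$ and of the Lagrangian case through the Euler-characteristic dimension count coincide with the paper's Theorem \ref{pfthm2} and Proposition \ref{dim2}.
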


\vskip 2mm
The proof of our main results combines classical computations in vector bundle theory as well as the computation of tangent spaces using deformation theory. 

\vskip 2mm
\noindent{\em Organization of the paper.} In section 2, we set some notation, we recall some standard definitions as well as the constructions of generalized syzygy bundles and extension bundles; and for the sake of completeness we gather some known results on deformation theory and moduli spaces of simple sheaves.

In section 3, we collect the basic properties of generalized syzygy bundles as well as of extension bundles. We
also introduce the moduli spaces we are interested in, namely, the moduli space of syzygy bundles $\cG^0_\cL$ and the moduli space of extension bundles  $\cP_\cL$. 
We  check that if we restrict $F$ to move in a family of dimension half the dimension of the corresponding moduli space, then $S$ (resp.~$E$) also moves in a family of dimension half of the dimension of its moduli space. This is a key step in both of our main theorems, since it allows to go directly from the statement about isotropic subspaces to that about Lagrangian subspaces.

Sections 4 and 5 are the heart of the paper: they are integrally devoted to prove Theorems \ref{mainthm1} and \ref{mainthm2}.
In analogy with the results in \cite{FM} for generalized syzygy bundles, we show that the construction of extension bundles induces a morphism $\beta$ from $\cP_{U_e}$, the moduli space of extension bundles, to $\Spl(r+v;c_1,c_2)$ which is a locally closed embedding.

We then show that, for a fixed isotropic subspace $\cL\subset \Spl(r;c_1,c_2)$, the restriction of $\alpha$ to $\cG^0_\cL$ (resp.~of $\beta$ to $\cP_\cL$) is isotropic, by explicitly comparing the Mukai symplectic holomorphic 2-forms. The analogous result with isotropic replaced by Lagrangian follows immediately by the previous dimension count.

Finally, in section 6, we discuss the stability of the generalized syzygy bundles. We summarize what is known and we  pose a problem that naturally arises in our more general set up.

\vskip 2mm

\noindent {\bf Acknowledgement.} The first author is thankful for hospitality at Universitat de Barcelona and Chennai Mathematical Institute, as well as to Sissa for a sabbatical year, during which this research took place.

\section{Set up}

\subsection{Notation and conventions} Throughout this paper we work over the complex numbers $\CC$. A scheme, algebraic space, algebraic stack will always be assumed to be locally of finite type over $\CC$; a point will always be a $\CC$-valued (i.e. closed) point.

By $X$ is a {\em K3 surface} we mean that $X$ is a smooth projective connected complex surface with trivial canonical line bundle $\omega _{\cO_X}={\mathcal O_X}$ and irregularity $q=0$. Hence $X$ has $p_a=p_g=1$ and $\chi ({\mathcal O}_X)=2$. Basic examples of K3 surfaces are quartic surfaces in $\PP^3$ and complete intersections of a quadric and a cubic hypersurfaces in $\PP^4$.

We will fix a $K3$ surface $X$ and we denote by
$\Spl(r;c_1,c_2)$ the moduli space of rank $r$ simple sheaves $E$ on $X$ with fixed Chern classes $c_1$ and $c_2$.

\subsection{Moduli spaces of simple sheaves}\label{moduli_spaces}

In this paper we study moduli spaces of simple sheaves on the K3 surface $X$.  Here we collect for the reader's conveniences some basic information and we fix relevant notation. 

A coherent sheaf $E$ on a K3 surface $X$ is called  {\em simple} if the natural injection $$H^0(X,\cO_X) \longrightarrow \End_{\cO_X}(E)$$ is an isomorphism. In particular, because $X$ is smooth, projective and connected, $E$ being simple is equivalent to $\End_{\cO_X}(E)$ being one-dimensional, or to $\Aut(E)=\CC^*\id_E$.

The moduli space of simple sheaves on $X$ is a natural algebraic space structure on the set of isomorphism classes of simple sheaves. It is defined by the properties, that are expressed in terms of families.

A {\em family} of simple sheaves on $X$ parametrized by a scheme $B$ is a simple sheaf $\cF$ on $X\times B$, flat over $B$ and such that, for every $b\in B$, the sheaf $\cF_b:=\cF|_{X\times\{b\}}$ is simple. Since flatness and fibers commute with pullback, a family $\cF$ parametrized by $B$ induces a pullback family $(\id_X\times \varphi)^*\cF$ on $B'$ for every morphism $\varphi: B'\to B$. We remark that if $L\in \Pic(B)$, then $\tilde\cF:=\cF\otimes p_B^*L$ is also a family parametrized by $B$ and for every $b\in B$, the sheaves $\cF_b$ and $\tilde\cF_b$ are isomorphic.

The moduli space of simple sheaves on $X$ is a reduced algebraic space $\Spl(X)$ with the following properties:
\begin{enumerate}
    \item Its points are the set of isomorphism classes of simple sheaves on $X$;
    \item For every family $\cF$ of simple sheaves parametrized by a scheme $B$, the set theoretic map $B\to \Spl(X)$ given by $b\mapsto [\cF_b]$ is induced by a morphism;
    \item The structure is universal with respect to properties (1) and (2), i.e., if $\Spl'(X)$ is another such structure, then the identity map between their points is induced by a (necessarily unique) morphism $\Spl(X)\to \Spl'(X)$.
\end{enumerate}
One can prove that such a structure always exists, and by (3), it is unique up to canonical isomorphism.

By flatness, the rank and Chern classes of fibers $\cF_b$ in a family $\cF$ are locally constant on $B$. Hence, we get a decomposition of $\Spl(X)$ as a disjoint union of open subspaces $\Spl(r;c_1,c_2)$
 with fixed rank $r$ and Chern classes $c_1$ and $c_2$.  For any polarization $H$ of $X$, every $H$-stable sheaf is also simple. Since $H$-stability is open in families, this defines an open subspace $M^H(r;c_1,c_2)$ of $\Spl(r;c_1,c_2)$ which is a quasiprojective scheme, while in general $\Spl(r;c_1,c_2)$ may be not separated.

A family of simple sheaves $\cU$ parametrized by $\Spl(r;c_1,c_2)$ is called {\em universal}, if for any other family $\cF$ parametrized by a reduced scheme $B$, we have that $\cF$ is isomorphic to the pullback of $\cU$ via the induced morphism $B\to X$ (up to tensoring with the pullback of a line bundle on $B$).

In some cases, for instance $\Spl(1;0,0)=\Pic(X)$, a universal family exists. When it exists, it is unique up to tensoring with the pullback of a line bundle on $\Spl(r;c_1,c_2)$. Such a family may not exist; however, it exists \'etale locally on $\Spl(r;c_1,c_2)$. The key consequence for our work is the following: to prove that a subset $U$ of $\Spl(r;c_1,c_2)$ is open, it is enough to show that for any family of simple sheaves $\cF$ with the given invariants parametrized by $B$, the set $B_U:=\{b\in B\,|\, [\cF_b]\in U\}$ is open in $B$.

For the sake of completeness we recall what is the tangent space to $\Spl(r;c_1,c_2)$.

\begin{proposition}\label{tgt-obs} Let $F\in \Spl(r;c_1,c_2)$. Then $T_{[F]}\Spl(r;c_1,c_2)\cong Ext^1(F,F)$. In particular, if $F$ is a vector bundle then $T_{[F]}\Spl(r;c_1,c_2)\cong H^1(F^*\otimes F)$.
\end{proposition}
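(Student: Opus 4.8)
The plan is to prove that $T_{[F]}\Spl(r;c_1,c_2)\cong \Ext^1(F,F)$ via standard deformation theory, identifying the Zariski tangent space at $[F]$ with the set of first-order deformations of $F$. First I would recall that the tangent space to a moduli space at a point $[F]$ is, by the defining universal property, the set of isomorphism classes of families parametrized by the dual numbers $\Spec\CC[\epsilon]/(\epsilon^2)$ whose special fiber is $F$. That is, a tangent vector corresponds to a flat sheaf $\widetilde F$ on $X\times\Spec\CC[\epsilon]/(\epsilon^2)$ restricting to $F$ over the closed point, together with its simpleness being automatic since simpleness is an open condition and $F$ is simple. The universal property stated in properties (1)--(3) of $\Spl(X)$ guarantees that such first-order families are exactly the $\CC$-points of the tangent space, so the whole problem reduces to classifying these infinitesimal deformations.

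Next I would invoke the classical description of infinitesimal deformations of a coherent sheaf: the set of isomorphism classes of flat deformations of $F$ over $\CC[\epsilon]/(\epsilon^2)$ is canonically in bijection with $\Ext^1_{\cO_X}(F,F)$. The cleanest way to see this is to note that a flat extension of $F$ by itself (as $\cO_X$-modules, via the extension $0\to F\otimes(\epsilon)\to \widetilde F\to F\to 0$ coming from the ideal $(\epsilon)\cong\CC$) is classified by the group $\Ext^1_{\cO_X}(F,F)$, with the trivial extension corresponding to the zero vector and hence to the trivial first-order deformation. This is entirely parallel to the deformation theory of any module over a base, and on a smooth projective variety one can either cite the standard references or give the Čech/local-to-global argument. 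The additive structure on $\Ext^1(F,F)$ matches the $\CC$-vector space structure on the tangent space, which is what makes it a genuine linear isomorphism rather than a mere bijection.

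For the final sentence, when $F$ is locally free I would simply use the canonical identification $\EExt^i_{\cO_X}(F,F)\cong \cO_X$-sheaves $\cH om(F,F)\cong F^*\otimes F$, with all higher local $\EExt$ sheaves vanishing because $F$ is a vector bundle. Consequently the local-to-global spectral sequence degenerates and gives $\Ext^1_{\cO_X}(F,F)\cong H^1(X,F^*\otimes F)$, which yields the stated refinement $T_{[F]}\Spl(r;c_1,c_2)\cong H^1(F^*\otimes F)$.

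The main obstacle, such as it is, is not the algebra but making the identification between the \emph{moduli-theoretic} tangent space and the \emph{deformation-theoretic} $\Ext^1$ rigorous: one must use the universal property of $\Spl(X)$ (rather than representability by a fine moduli scheme, which may fail) to argue that morphisms $\Spec\CC[\epsilon]/(\epsilon^2)\to\Spl(X)$ lying over $[F]$ are in natural bijection with first-order families, and that this bijection respects the vector space structures. Since a universal family exists only étale locally, I would phrase this using the étale-local universal families mentioned in the excerpt, checking that the resulting identification is independent of the chosen local family (the ambiguity being tensoring by a line bundle on the base, which acts trivially on $\Spec\CC[\epsilon]/(\epsilon^2)$). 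Everything else is routine and can be cited.
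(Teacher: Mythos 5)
Your argument is correct, but note that the paper does not actually prove this proposition internally: its ``proof'' is a single citation to \cite[Theorem 0.1]{Mu}. What you have written is essentially the standard deformation-theoretic argument underlying that citation: tangent vectors at $[F]$ are morphisms $\Spec\CC[\epsilon]/(\epsilon^2)\to\Spl(r;c_1,c_2)$ centered at $[F]$; these correspond to first-order flat deformations of $F$; such deformations are classified by $\Ext^1(F,F)$ via the extension $0\to \epsilon\widetilde F\to \widetilde F\to F\to 0$; and for $F$ locally free the local-to-global spectral sequence degenerates (all higher local Ext sheaves vanish) to give $\Ext^1(F,F)\cong H^1(F^*\otimes F)$. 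The place where your write-up adds genuine value --- and where care is truly needed --- is the passage from the paper's coarse universal properties (1)--(3) to the claim that dual-number points of the algebraic space biject with first-order families: those properties only produce a morphism out of a family, not a family out of a morphism, and since $\Spec\CC[\epsilon]/(\epsilon^2)$ has a single point one cannot argue pointwise. You correctly resolve this by passing to an \'etale chart carrying a universal family (\'etale maps induce isomorphisms on tangent spaces), and by observing that the line-bundle ambiguity in the universal family is harmless because every line bundle on $\Spec\CC[\epsilon]/(\epsilon^2)$ is trivial. The trade-off between the two treatments is the expected one: the citation to \cite{Mu} is economical and simultaneously delivers the smoothness of $\Spl(r;c_1,c_2)$ and the symplectic form, both of which the paper needs later, whereas your proof is self-contained for the tangent-space identification but would still have to invoke Mukai for those additional facts.
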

\begin{proof} See, for instance, \cite[Theorem 0.1]{Mu}.
\end{proof}

\begin{corollary}\label{spl_dim} Let $E\in \Spl(r;c_1,c_2)$ be a vector bundle. Then by Riemann-Roch \[
\dim T_{[E]}\Spl(r;c_1,c_2)=2-\chi(E^*\otimes E)=
c_1^2-2r\chi(E)+2r^2+2,\] 
where (again by Riemann Roch) $\chi(E)=2r+c_1^2/2-c_2$. In other words, \[
\dim T_{[E]}\Spl(r;c_1,c_2)=-2r^2+(1-r)c_1^2+2rc_2+2.
\]
\end{corollary}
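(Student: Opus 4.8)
The plan is to reduce everything to the identification $T_{[E]}\Spl(r;c_1,c_2)\cong H^1(X,E^*\otimes E)$ furnished by Proposition~\ref{tgt-obs}, and then to compute the dimension of this cohomology group by combining simplicity, Serre duality, and Hirzebruch--Riemann--Roch.

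First I would pin down the two ``outer'' cohomology groups of $E^*\otimes E$. Since $E$ is simple, $h^0(X,E^*\otimes E)=\dim\End_{\cO_X}(E)=1$. Because $X$ is a K3 surface, $\omega_X\cong\cO_X$, and as $E^*\otimes E$ is self-dual, Serre duality gives $H^2(X,E^*\otimes E)\cong H^0(X,E^*\otimes E)^*$, so $h^2(X,E^*\otimes E)=1$ as well. Hence
\[
\chi(E^*\otimes E)=h^0-h^1+h^2=2-h^1(X,E^*\otimes E),
\]
which combined with Proposition~\ref{tgt-obs} yields the first claimed equality $\dim T_{[E]}\Spl(r;c_1,c_2)=2-\chi(E^*\otimes E)$.

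The second step is the Riemann--Roch computation of $\chi(E^*\otimes E)$. On the K3 surface $X$ one has $\operatorname{td}(X)=1+2[\mathrm{pt}]$, so for any coherent sheaf $G$ of rank $\rho$ the Hirzebruch--Riemann--Roch formula reads $\chi(G)=2\rho+\operatorname{ch}_2(G)$. Applied to $E$ this gives the auxiliary formula $\chi(E)=2r+c_1^2/2-c_2$ quoted in the statement. For $E^*\otimes E$ I would use $\operatorname{ch}(E^*\otimes E)=\operatorname{ch}(E^*)\operatorname{ch}(E)$ with $\operatorname{ch}(E^*)=r-c_1+(c_1^2-2c_2)/2$; extracting the codimension-two part yields $\operatorname{ch}_2(E^*\otimes E)=(r-1)c_1^2-2rc_2$, whence $\chi(E^*\otimes E)=2r^2+(r-1)c_1^2-2rc_2$. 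Substituting this into $2-\chi(E^*\otimes E)$ and rewriting $c_2$ in terms of $\chi(E)$ produces the intermediate expression $c_1^2-2r\chi(E)+2r^2+2$, while keeping $c_2$ gives the final closed form $-2r^2+(1-r)c_1^2+2rc_2+2$.

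There is no genuine conceptual obstacle here; the only thing requiring care is the sign bookkeeping in the product $\operatorname{ch}(E^*)\operatorname{ch}(E)$ together with the observation that terms landing in cohomological degree exceeding $4$ vanish on a surface. As a cross-check, one could instead phrase the computation through the Mukai vector $v(E)=(r,c_1,\,r+c_1^2/2-c_2)$ and the identity $\chi(E^*\otimes E)=-\langle v(E),v(E)\rangle$ for the Mukai pairing, which reproduces $\chi(E^*\otimes E)=2r^2+(r-1)c_1^2-2rc_2$ without expanding the Chern character by hand.
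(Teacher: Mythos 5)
Your proposal is correct and follows exactly the route the paper intends (the corollary is stated with its proof left implicit as ``by Riemann--Roch''): Proposition~\ref{tgt-obs} identifies the tangent space with $H^1(E^*\otimes E)$, simplicity and Serre duality on the K3 give $h^0(E^*\otimes E)=h^2(E^*\otimes E)=1$, hence $h^1=2-\chi(E^*\otimes E)$, and Hirzebruch--Riemann--Roch with $\operatorname{td}(X)=1+2[\mathrm{pt}]$ yields $\chi(E^*\otimes E)=2r^2+(r-1)c_1^2-2rc_2$. All your sign bookkeeping and the Mukai-vector cross-check are accurate.
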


\subsection{Isotropic and Lagrangian subspaces of symplectic algebraic spaces}

We recall that a symplectic form on a smooth algebraic space $M$ is a closed non-degenerate differential 2-form $\omega $. Therefore, for any $p\in M$, we have a skew symmetric perfect  pairing  $$\omega:T_pM\wedge T_pM \to \CC.$$ A symplectic algebraic space is a pair $(M,\omega)$ where $M$ is a smooth algebraic space and $\omega$ is a chosen symplectic form on $M$.

\vskip 2mm
 In \cite[Theorem 0.1]{Mu}, Mukai proved

\begin{theorem} The moduli space $\Spl(r;c_1,c_2)$ is  smooth and it has a natural symplectic structure, induced on the tangent space at the point $[E]$ by the Yoneda pairing on  $\ext^1(E,E)$.
\end{theorem}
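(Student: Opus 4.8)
The plan is to establish three separate assertions: that $\Spl(r;c_1,c_2)$ is smooth, that the Yoneda--trace pairing defines a fibrewise non-degenerate alternating $2$-form, and that this form is algebraic and closed. Only the last of these is genuinely delicate. For smoothness I would appeal to the deformation theory of simple sheaves. By Proposition \ref{tgt-obs} the tangent space at $[E]$ is $\Ext^1(E,E)$, while the obstructions to extending a first-order deformation lie in $\Ext^2(E,E)$. The decisive input is that obstruction classes are traceless, i.e.\ they lie in $\ker\big(\tr\colon \Ext^2(E,E)\to H^2(\cO_X)\big)$. On a K3 surface $\omega_X\cong\cO_X$, so Serre duality gives $\Ext^2(E,E)\cong \operatorname{Hom}(E,E)^*\cong\CC$, the last identification because $E$ is simple. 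Since the composite of the natural map $H^2(\cO_X)\to\Ext^2(E,E)$ (cup product with $\id_E$) and the trace is multiplication by $r=\rk E\neq 0$, the trace is an isomorphism of one-dimensional spaces, its kernel vanishes, all obstructions vanish, and $\Spl(r;c_1,c_2)$ is smooth of the dimension computed in Corollary \ref{spl_dim}.

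At a point $[E]$ I would define
$$\omega_{[E]}(a,b) = \tr(a\cup b), \qquad a,b\in \Ext^1(E,E),$$
where $\cup$ is the Yoneda product $\Ext^1(E,E)\times\Ext^1(E,E)\to\Ext^2(E,E)$ and $\tr\colon\Ext^2(E,E)\to H^2(\cO_X)\cong\CC$ is the trace. That $\omega_{[E]}$ is alternating is the graded commutativity of the Yoneda product under the trace: for degree-one classes one has $\tr(a\cup b)=-\tr(b\cup a)$. Non-degeneracy is precisely Serre duality: the pairing $\Ext^1(E,E)\times\Ext^1(E,E\otimes\omega_X)\to H^2(\omega_X)\cong\CC$ is perfect, and the isomorphism $\omega_X\cong\cO_X$ identifies it, up to sign, with $\omega_{[E]}$. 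Thus fibrewise $\omega$ is a non-degenerate alternating form, and in particular $\dim\Ext^1(E,E)$ is even.

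It remains to assemble the $\omega_{[E]}$ into a global algebraic $2$-form and to prove $d\omega=0$. Working \'etale-locally, where a universal sheaf $\cU$ on $X\times\Spl(r;c_1,c_2)$ exists, I would take the Kodaira--Spencer class $\kappa$, namely the component of the Atiyah class of $\cU$ along the base, an element of $\Ext^1_{X\times\Spl}(\cU,\cU\otimes p^*\Omega)$ (with $p\colon X\times\Spl(r;c_1,c_2)\to\Spl(r;c_1,c_2)$ the projection) inducing fibrewise the identification $T\cong\Ext^1(E,E)$, and set $\omega=p_{*}\,\tr(\kappa\cup\kappa)$, a section of $\wedge^2\Omega$ whose value at $[E]$ is the form above. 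Since a local universal sheaf is unique up to twist by a line bundle pulled back from the base, which does not affect $\tr\kappa$, these local forms glue to a global $\omega$.

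The closedness $d\omega=0$ is the main obstacle. Following Mukai, I would express $d\omega$ as the pushforward of the trace of a triple Yoneda product of Kodaira--Spencer classes; the derivation (Bianchi) property of the Atiyah class together with the graded commutativity and associativity of the Yoneda product force the totally antisymmetric part of this triple product, which is exactly what computes $d\omega$, to cancel in pairs, giving $d\omega=0$. A fallback I would keep in reserve is to exhibit $\omega$ locally as $d\eta$ for an explicit $1$-form $\eta$ built from the universal family, which would make closedness automatic. This is where essentially all the work lies; smoothness, alternation and non-degeneracy are formal consequences of Serre duality and of the multiplicative structure on $\Ext^\bullet(E,E)$.
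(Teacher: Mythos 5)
The first thing to note is that the paper does not prove this statement at all: it is Mukai's theorem, quoted with the citation \cite[Theorem 0.1]{Mu} and used as a black box throughout (Proposition \ref{tgt-obs} is likewise deferred to that reference). So your proposal must be measured against Mukai's own proof rather than against anything in this paper.

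Measured that way, your first two steps are correct and are exactly the standard argument. For smoothness: tangent space $\Ext^1(E,E)$, obstructions in $\Ext^2(E,E)$, obstructions traceless, and on a K3 the trace map $\Ext^2(E,E)\to H^2(\cO_X)$ is an isomorphism because Serre duality makes it dual to $H^0(\cO_X)\to\End(E)$, which is an isomorphism precisely because $E$ is simple; your variant via the composite $H^2(\cO_X)\to\Ext^2(E,E)\to H^2(\cO_X)$ being multiplication by $r$ also works since $r\ge 1$ here. Do note, however, that ``obstruction classes are traceless'' is itself a nontrivial theorem (the trace of the obstruction is the obstruction to deforming $\det E$, which vanishes because $\Pic(X)$ is unobstructed in characteristic zero); you are entitled to quote it, but it is an input, not a formality. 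Alternation and non-degeneracy via graded commutativity of the trace pairing and Serre duality are also right, as is the remark that twisting a local universal family by a line bundle from the base does not change the induced form, so the local definitions glue. The genuine gap is exactly where you place it: closedness. As written, ``the Bianchi property of the Atiyah class together with graded commutativity and associativity force the totally antisymmetric part of this triple product to cancel in pairs'' is an assertion, not an argument --- nothing in your proposal identifies the terms of $d\omega$, let alone shows how they pair off --- and this cancellation is the real content of Mukai's theorem; it is where his paper, and the later textbook treatment by Huybrechts and Lehn, invest essentially all of their effort, using the full Atiyah class of the universal sheaf on $X\times\Spl(r;c_1,c_2)$ and not only its Kodaira--Spencer component along the base. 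The fallback of exhibiting a local primitive $\eta$ is not known to be any easier. In summary: you have correctly proved smoothness and that $\omega$ is a pointwise non-degenerate alternating algebraic $2$-form, but not that it is closed, and that missing step is the one part of the theorem that cannot be dismissed as formal.
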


We now recall the definition of isotropic and Lagrangian subspaces for a symplectic algebraic space. The main result of this paper will be a way to construct isotropic and Lagrangian subspaces in $\Spl(r;c_1,c_2)$.

\vskip 2mm
{\em Isotropic subspaces} are smooth irreducible locally closed subspaces $L$ of a symplectic algebraic space $(M,\omega)$ such that $\omega _{|L}=0$ (i.e. for any $q\in L$ the map $$\omega _{|L}: T_qL\wedge T_qL \hookrightarrow T_qM\wedge T_qM \to \CC$$ is zero). For any isotropic subspace $L$ of $(M,\omega)$ and for any $q\in L$, we have a pairing $T_qL\otimes (T_qM/T_qL) \to \CC$ inducing an inclusion $T_qL\hookrightarrow (T_qM/T_qL)^*$, and globally on $L$ an inclusion of vector bundles $T_L\hookrightarrow N_{L/M}^*$.

{\em Lagrangian subspaces} are isotropic subspaces $L$ of a symplectic algebraic space $(M,\omega)$ of maximal dimension $\dim L=\dim M/2$. In this case, the pairing $T_qL\otimes T_qM/T_qL \to \CC$ is a perfect pairing and $T_qL\cong (T_qM/T_qL)^*$ for all $q\in L$. Globally, $\omega$ induces an isomorphism $T_L\cong N_{L/M}^*$.

\vskip 2mm
\subsection{Generalized syzygy bundles.}

We will now introduce the first construction  of simple sheaves which will provide us isotropic and Lagrangian subspaces of the moduli space $\Spl(r;c_1,c_2)$.

\begin{definition}\label{def1}
Let $F$ be a globally generated vector bundle of rank $r$ on a K3 surface $X$. For any general subspace $W\subset  H^0(X, F)$ with $w:=\dim W\ge 2+r$ we define the {\em generalized syzygy bundle} $S$ on $X$ as the kernel of the evaluation map $$eval_{W}: W\otimes \mathcal O_X \to {\mathcal F}.$$ Therefore, $S$  fits inside a short exact sequence:
\begin{equation}
    \label{exact1}
e: \quad 0\to  S\to W\otimes \mathcal O_X \to  F\to 0, \text{ and }
\end{equation}
$$\begin{array}{rcl} rank(S)& = & \dim W- rank(F)= w-r, \\
c_1(S) & = & -c_1(F) \\
c_2(S)& = & -c_2(F)+c_1(F)^2.
\end{array}
$$
We will call  $S$ the {\em generalized syzygy bundle} associated to the couple $(F,W)$.
\end{definition}
Notice that when $F=\cO _X(L)$ is a very ample line bundle on $X$ and $W=H^0(X,L)$ we recover the classical definition of syzygy bundle, namely, the {\em syzygy bundle} $M_L$ associated to $\cO _X(L)$ is by definition the kernel of the evaluation map  $eval_{L}: H^0{\mathcal O}_X(L)\otimes \mathcal O_X \to {\mathcal O}_X(L)$. Thus, $M_L$ sits in the exact sequence:
\[
0\to M_L\to H^0({\mathcal O}_X(L)\otimes \mathcal O_X \to {\mathcal O}_X(L)\to 0.
\]
Syzygy bundles  on smooth curves of genus $g\ge 1$ are well understood thanks to work of  Butler \cite{Bu}, Beauville \cite{Be} and Ein-Lazarsfeld \cite{EL} among others. 
Syzygy bundles and generalized syzygy bundles on higher dimensional varieties arise in a variety of geometric and algebraic problems; and there has been great interest in trying to establish their stability with respect to a suitable ample line bundle as well as in describing their moduli spaces (see, for instance, \cite{BP}, \cite{B}, \cite{CL}, \cite{CMM}, \cite{ELM}, \cite{MM}, \cite{MS}, \cite{MS2} and \cite{MR}).
For recent results on generalized syzygy bundles and their moduli spaces on smooth projective varieties of dimension $\ge 2$ the reader can look at \cite{FM}.

\vskip 2mm
\noindent  {\bf Toy example.} Let $X\subset \PP^3$ a general quartic surface, $L=\cO_X(1)$ and $W\subset H^0(\cO_X(L))$ a general subspace of dimension 3 (Note that we are dealing with non-complete linear systems). We consider the syzygy bundle $M$ associated to  $(\cO_X(L), W)$. So, we have a short exact sequence:
$$
0 \to M \to W\otimes \cO_X \to \cO_X(L) \to 0.
$$
By construction $M$ is a rank 2  vector bundle on $X$ with Chern classes $(c_1(M),c_2(M))=(-L,4)$. $M$ is parameterized by an open subset of $Gr(3,H^0(\cO_X(L))$ which is a smooth rational quasi-projective variety of dimension 3. On the other hand, $M$ is simple. Hence $M$ lies in the moduli space $\Spl(2;-L,4)$ of rank 2 simple  sheaves on $X$ with Chern classes $(-L,4)$ which is a smooth symplectic algebraic space of dimension $\dim Ext^1(M,M)=6$. Indeed, $\dim Ext^1(M,M)=-\chi(M\otimes M^*)+2.$ If we tensor the exact sequence 
$$
0 \to \cO_X(-L) \to W^*\otimes \cO_X \to M^*\to 0
$$
with $M$ we get
$$
0 \to M(-L) \to W^*\otimes M \to M \otimes M^* \to 0.
$$
Therefore, we have $\dim Ext^1(M,M)=-3\chi (M)+\chi M(-L)+2=6$. 

\noindent {\em  Claim:} $\alpha :Gr(3,H^0(\cO_X(L)) \to \Spl(2;-L,4)$ is a Lagrangian locally closed embedding.

\noindent {\em Proof of the Claim.} Clearly $\alpha $ is injective and the differential 
$$\begin{array}{ccc} d \alpha :T_{[M]}Gr(3,H^0(\cO_X(L)) & \to &  T_{[M]}\Spl(2;-L,4)
\\ \parallel & & \parallel
\\
W^*\otimes H^1(M) & &  H^1(M\otimes M^*) \end{array} $$ is also injective. To prove that $\alpha $ is a Lagrangian locally closed embedding we only need to check that $W^*\otimes H^1(M)\subset H^1(M\otimes M^*)$  is an isotropic vector subspace. We observe that $W^*\otimes H^1(M)\cong H^0(M^*)\otimes H^1(M)$ and the map 
$$d \alpha :W^*\otimes H^1(M)\to H^1(M\otimes M^*)$$ is the cup product. To prove that $\wedge $ on $ H^0(M^*)\otimes H^1(M)$ is trivial it suffices to check it on simple tensors.   We take $v_1,v_2\in H^0(M^*)$ and $s_1,s_2\in H^1(M)$. So, $(v_1\otimes s_1)\otimes (v_2\otimes s_2)\in H^1(M\otimes M^*)\otimes H^1(M\otimes M^*)$ and when we compose to $H^2(M\otimes M^*)$ we get $v_1\otimes s_1(v_2)\otimes s_2\in H^0(M^*)\otimes H^1(\cO _X)\otimes H^1(M)=0$  because being $X$ a quartic surface in $\PP^3$ we have $H^1(\cO _X)=0$.

\vskip 2mm
\subsection {Extension of bundles.}
 Let us introduce the other construction of simple sheaves which will provide us new families of
isotropic and Lagrangian subspaces of the moduli space $\Spl(r; c_1, c_2)$.

\begin{definition}\label{DefExt}
Let $F$ be a  vector bundle of rank $r$ on a K3 surface $X$. For any subspace $V^*\subset  H^1(X,F^*)\cong Ext^1(F,\cO_X)$ of dimension $v:=\dim V\ge 1$ we define the {\em extension  bundle} $E$ on $X$ associated to the couple $(F,V)$
as the vector bundle that comes up from the induced extension of $F$ by $V\otimes \cO_X$. Therefore, $ E$  fits into a short exact sequence:
\begin{equation}
    \label{extension1}
e: \quad 0\to V\otimes \cO_X\to E \to F\to 0
\end{equation}
where $e\in H^1(V\otimes F^*)=Hom(V^*,H^1(F^*))$ is the inclusion $V^*\hookrightarrow H^1(F^*)$. Moreover, we have:
$$\begin{array}{rcl} rank(E)& = & rank(F)+ \dim V= r+v  , \\
c_1(E) & = & c_1(F) \\
c_2(E)& = & c_2(F).
\end{array}
$$
\end{definition}
Extension bundles
have been much investigated and used in many contexts. See, for instance, \cite{H}, \cite{CF} and \cite{L}.

\section{Moduli of syzygy bundles and extension bundles}

The following is Lemma 3.1 in \cite{FM} and it will be used many times.
\begin{lemma} \label{aux} Let $F $ be a simple vector bundle on a K3 surface $X$, If $F $ is generated by global sections, then $H^2(F)=0$ unless $F=\cO_X$.
\end{lemma}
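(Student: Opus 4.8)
The plan is to use Serre duality to convert the vanishing of $H^2(F)$ into a statement about global homomorphisms. Since $X$ is a K3 surface we have $\omega_X \cong \cO_X$, so for the vector bundle $F$ Serre duality gives $H^2(F) \cong H^0(F^*)^*$; as $H^0(F^*) = \operatorname{Hom}(F,\cO_X)$, it suffices to prove that any simple, globally generated $F$ admitting a nonzero morphism $\phi\colon F\to \cO_X$ must be isomorphic to $\cO_X$.

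So assume $\phi\colon F\to\cO_X$ is nonzero. The key point is that $X$ is connected, so $H^0(\cO_X)=\CC$; thus for each $s\in H^0(F)$ the element $\phi(s)\in H^0(\cO_X)=\CC$ is just a scalar. I first claim that some global section of $F$ is not annihilated by $\phi$. Indeed, precomposing $\phi$ with the evaluation map $H^0(F)\otimes\cO_X\to F$, which is surjective because $F$ is globally generated, yields a map $H^0(F)\otimes\cO_X\to\cO_X$ determined by the constants $\phi(s_i)$ on a basis $(s_i)$; if these all vanished the composite would be zero, and by surjectivity of the evaluation map $\phi$ itself would vanish, contrary to assumption.

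Fix then a section $s\in H^0(F)$, viewed as a morphism $\psi\colon\cO_X\to F$, with $\phi\circ\psi=\lambda\neq 0$. After rescaling, this composite is the identity of $\cO_X$, so $\psi$ is a split injection and $\cO_X$ is a direct summand of $F$: we obtain $F\cong\cO_X\oplus K$ with $K=\ker\phi$. Here simplicity enters decisively: the projection of $F$ onto the $\cO_X$ summand is an idempotent endomorphism, and an idempotent that is a scalar multiple of the identity must be $0$ or $\mathrm{id}$, the latter forcing $K=0$. Since $F$ is simple we have $\End(F)=\CC\cdot\mathrm{id}$, so $K=0$ and $F\cong\cO_X$. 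This establishes the contrapositive: if $F\not\cong\cO_X$, then no nonzero $\phi$ exists, i.e.\ $H^0(F^*)=0$ and therefore $H^2(F)=0$.

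I expect the only delicate point to be the production of a global section on which $\phi$ does not vanish; this is precisely where global generation is indispensable, and it is the only place the hypothesis is used beyond simplicity and the K3 condition $\omega_X\cong\cO_X$. The remaining steps are formal: Serre duality, the elementary splitting principle that $\phi\circ\psi$ being an isomorphism makes $\cO_X$ a direct summand, and the characterization $\End(F)=\CC$ of simple bundles.
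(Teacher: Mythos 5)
Your proof is correct and is essentially the expected one: the paper gives no argument of its own for this lemma (it simply cites \cite[Lemma 3.1]{FM}), and your route---Serre duality with $\omega_X\cong\cO_X$ to identify $H^2(F)^*$ with $\operatorname{Hom}(F,\cO_X)$, global generation to produce a section on which a nonzero $\phi$ is a nonzero constant, hence a splitting $F\cong\cO_X\oplus K$, and simplicity to force $K=0$---is the standard argument for this statement. The only point you leave implicit, that the idempotent $\psi\circ\phi$ cannot be the zero endomorphism, is immediate since it restricts to the identity on the nonzero summand $\cO_X$.
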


\begin{Notation} We will denote by $$U:=U(r;c_1,c_2)\subset  \Spl(r; c_1,c_2)$$ the open locus parametrizing  globally generated rank $r$ simple vector bundles $F$ on $X$ such that $H^1(F)=0$.
\end{Notation}

\begin{remark}
By Lemma \ref{aux}, for any $F\in U$, we have $H^2(F)=0$. Therefore, $\dim H^0(F)=\chi(F)$ is constant on $U$. The fact that $U$ is open follows from the cohomology and base change theorem. 
\end{remark}

\begin{remark} (1) The results are only interesting when $U$ is non-empty. This is not very restrictive, since the theorems A and B of Serre guarantee that for any coherent sheaf $F$ and any ample line bundle $L$ on $X$, there always exists an integer $n_0$ such that for all $n\ge n_0$,   $F (nL)$ is generated by global sections
    and $H^1(F(nL))=0.$ Moreover, there is an isomorphism $$\Spl(r;c_1,c_2)\cong \Spl(r; c_1+rnL, c_2+(r-1)nc_1L+\binom{r}{2}n^2L^2)$$ which maps $F$ to $F(nL):=F\otimes \cO_X(nL)$. 

\vskip 2mm \noindent (2) For ample line bundles $\cO_X (L)$ on a K3 surface $X$ the hypothesis $H^1(\cO _X(L))=0
$ is always satisfied. Indeed, by Kodaira vanishing theorem we have  $H^1(\cO _X(L))=H^1(\cO _X(-L))=0
$.
\end{remark}

\begin{Notation} We will denote by $$U_e:=U_e(r;c_1,c_2)\subset  \Spl(r; c_1,c_2)$$ the open locus parametrizing rank $r$ simple vector bundles $F$ on $X$ such that $H^0(F)=H^0(F^*)=0.$
We note that for such an $F$, we have  $h^1(F)=-\chi (F)$.
\end{Notation}

\begin{remark} Again, our results are only  interesting if $U_e$ is nonempty. There are interesting examples of this. For instance, for any choice of ample line bundle $H$ on $X$, we have $M^H(r;0,c_2)\subset U_e$. 
For a line bundle example, assume that $X$ contains two disjoint smooth rational curves $C_1$ and $C_2$. Then any line bundle $L_n:=\cO_X(nC_1-nC_2)$ with $n\ge 1$ will be in $U_e$, with $h^0(L_n)=h^2(L_n)=0$, and thus by Riemann-Roch \[h^1(L_n)=-\chi (\cO_X(nC_1-nC_2))=-\frac{(nC_1-nC_2)^2}{2}-2= 2n^2-2\] 
In particular, we need $n>1$ to ensure that a nonsplitting extension exists.
\end{remark}

\begin{definition}
We define
$$\cG _U:=\{(F,W) \mid F\in U, W\subset H^0(F), \dim (W)=w\}.$$
The natural projection $\pi: \cG_U\to U$ is a Grassmann bundle and  $\cG_U$ is a smooth algebraic space of dimension $$\dim \cG_U=\dim \Spl(r;c_1,c_2)+ w(v-w)$$ where $v:=\dim H^0(F)$. For more details, see \cite[Definition 3.4]{FM}.
Moreover, we define 
$$\cG^0 _U:=\{(F,W)\in \cG _U  \mid eval_W \text{ is surjective}\}$$
which is open in $\cG _U$ and we keep denoting  by $\pi $ the natural projection $\pi : \cG^0_U \to U$.
\end{definition}

\begin{definition}
As in \cite[Definition 3.4]{FM}, we define 
 $$\cP _{U_e}:=\{ (F,V) \mid F \in U_e(r;c_1,c_2) \text{ and } V^*\subset H^1(F^*)\}.$$
More precisely, we consider the open substack $\cU_e$ in the moduli stack of simple sheaves $\SSpl(r;c_1,c_2)$ with the same points as $U_e$ (for details, see \cite[Section 2.1]{FM}  and, in particular, Proposition 2.5 and Definition 3.4). We denote by $\cF_{\cU_e}$ the universal bundle on $X\times {\cU_e}$, and let $\cP_{\cU_e}$ be the relative Grassmann bundle of rank $v$ subspaces in the 
vector bundle $R^1\bar p_*\cF_{\cU_e}$, where $\bar p:X\times {\cU_e}\to {\cU_e}$ is the projection. Let $\bar \pi: \cP_{\cU_e}\to  \cU_e$ be the projection  map and  denote by $\cF$ the  pullback vector bundle $\cF:=(\id_X\times \bar \pi)^*\cF_{\cU_e}$ on $X\times \cP_{\cU_e}$. \\

We  define,  in analogy with \cite{FM}, the algebraic space $\cP _{U_e}$ to be the associated coarse moduli space.
 The natural projection $\pi: \cP_{U_e}\to U_e$ is a Grassmann bundle. Therefore,  $\cP_{U_e}$ is a smooth algebraic space, of dimension $$\dim \cP_{U_e}=\dim \Spl(r;c_1,c_2)+v(u-v)$$ where $$u=\dim H^1(F^*)=-\chi(F^*)=-2r-c_1^2/2+c_2.$$ Moreover, $\cP_{U_e}\to U_e$ is proper.
 \end{definition}

%%%%%%%%%%%%%%%%%%%%%%%%%%%%%%%%%%%%%%%%%%%%
\subsection{Basic properties of generalized syzygy bundles and extension bundles}

In  this subsection, we collect the general results on generalized syzygy bundles as well as on extension bundles needed in the sequel. As a first result we will prove that  the generalized syzygy bundle $S$ and the extension bundle $E$ associated to $(F ,W)\in \cG _U^0$ and $(F,V)\in \cP_{U_e}$, respectively, are simple. We then state  the injectivity of the natural morphism $\cG_U^0\to \Spl(r';c_1',c_2')$ induced by the set  theoretic map $(F,W)\mapsto S$. In a similar vein, we prove that the morphism $\cP_{U_e}\to \Spl(r+v;c_1,c_2)$ inducing the set map $(F,V)\mapsto E$ is also injective.

\vskip 2mm

\begin{remark}
Let $F$ be a nontrivial vector bundle on a K3 surface $X$ and $w\in \NN_{>0}$. The following conditions are equivalent:\begin{itemize}
\item[(1)] For a generic $W\subset H^0(F)$, the evaluation map $W\otimes \cO_X \to F$ is surjective; and
\item[(2)] $F$ is generated by global sections and $w\ge rank(F) +2$.
\end{itemize}
\end{remark}

\begin{lemma} \label{simple} Let $X$ be a  K3 surface 
 and let $S$ be a generalized syzygy bundle associated to $(F,W)\in \cG_U^0$. Then:
\begin{itemize}
    \item[(a)] $H^0(S)=0$,
    \item[(b)] There is a natural isomorphism $W^*\to H^0(S^*)$,
    \item[(c)] $S$ is simple.
    \end{itemize} 
\end{lemma}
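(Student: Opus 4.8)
The plan is to prove the three statements (a), (b), (c) in order, since each feeds into the next. Throughout I work with the defining short exact sequence
\begin{equation*}
0\to S\to W\otimes \cO_X\to F\to 0
\end{equation*}
of \eqref{exact1}, together with its dual sequence, and with the fact (from the definition of $\cG_U^0$) that $F\in U$, so that $F$ is a globally generated simple vector bundle with $H^1(F)=0$, and $W\subset H^0(F)$ is a $w$-dimensional generating subspace with $w\ge 2+r$.

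For part (a), I would take the long exact sequence in cohomology associated to \eqref{exact1}. This gives
\begin{equation*}
0\to H^0(S)\to W\otimes H^0(\cO_X)\xrightarrow{\ ev\ } H^0(F)\to H^1(S)\to\cdots.
\end{equation*}
Since $H^0(\cO_X)\cong \CC$, the middle map is exactly the inclusion $W\hookrightarrow H^0(F)$ induced by the evaluation map, which is injective by construction. Hence $H^0(S)=\ker(W\to H^0(F))=0$.

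For part (b), I would dualize \eqref{exact1}. Because $F$ is locally free, so is $S$, and I get
\begin{equation*}
0\to F^*\to W^*\otimes \cO_X\to S^*\to 0,
\end{equation*}
noting that $\EExt^1(F,\cO_X)$ vanishes locally since everything is a vector bundle. Taking global sections yields
\begin{equation*}
0\to H^0(F^*)\to W^*\to H^0(S^*)\to H^1(F^*)\to W^*\otimes H^1(\cO_X).
\end{equation*}
The natural map $W^*\to H^0(S^*)$ I want to show is an isomorphism. Surjectivity requires that $W^*\to H^0(S^*)$ hits everything, i.e. that the connecting map $H^0(S^*)\to H^1(F^*)$ is zero; injectivity requires $H^0(F^*)=0$. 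The main subtlety here is controlling $H^0(F^*)$ and the connecting map. I expect $H^0(F^*)=0$ to follow from $F$ being globally generated and nontrivial: a nonzero section of $F^*$ would give a map $F\to\cO_X$, and global generation forces such a map to be either zero or to exhibit $\cO_X$ as a quotient in a way incompatible with the invariants (alternatively one argues that a globally generated bundle with a section of its dual splits off a trivial summand, contradicting simplicity unless $F=\cO_X$, which is excluded since $w\ge r+2$ forces $\rk S\ge 2$). For the connecting map, I would use $H^1(\cO_X)=0$ (valid since $q=0$ on a K3), so the term $W^*\otimes H^1(\cO_X)=0$; this alone forces $H^1(F^*)$ to be a quotient, but to get the connecting map to vanish I instead compare with the dual of the statement $H^1(F)=0$ via Serre duality on the K3 ($\omega_X=\cO_X$), which gives $H^1(F^*)\cong H^1(F)^*=0$. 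With $H^1(F^*)=0$ and $H^0(F^*)=0$ the displayed sequence collapses to the desired isomorphism $W^*\xrightarrow{\sim} H^0(S^*)$.

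For part (c), I would compute $\End(S)=\Hom(S,S)$ and show it is $1$-dimensional. The cleanest route is to apply $\Hom(S,-)$ to \eqref{exact1}, or equivalently to tensor the dual sequence by $S$ and take cohomology, obtaining
\begin{equation*}
0\to \Hom(S,S)\to W\otimes H^0(S^*)^{\vee}?\,\cdots
\end{equation*}
more concretely: apply $\Hom(-,S)$ to \eqref{exact1} to get
\begin{equation*}
0\to \Hom(F,S)\to W^*\otimes H^0(S)\to \Hom(S,S)\to \Ext^1(F,S)\to\cdots.
\end{equation*}
Here $H^0(S)=0$ by part (a), so the term $W^*\otimes H^0(S)$ vanishes and I obtain an injection $\Hom(S,S)\hookrightarrow \Ext^1(F,S)$. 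This suggests instead applying $\Hom(S,-)$ to \eqref{exact1}, giving
\begin{equation*}
0\to \Hom(S,S)\to W\otimes H^0(S)^{*}\cdots,
\end{equation*}
which again involves $H^0$ terms I can control via (a) and (b). The strategy is to chase the relevant long exact sequence so that $\End(S)$ is squeezed between a term that vanishes by part (a) and a term identified via part (b) with the line $\CC\cdot\id$; the identification $W^*\cong H^0(S^*)$ from (b) is precisely what pins the Hom-space down to one dimension, reflecting that the only endomorphisms come from scalars acting on $W$. I expect part (c) to be the main obstacle, since it requires carefully assembling the cohomological input from (a) and (b) together with the simplicity of $F$ (which gives $\End(F)=\CC$) and the vanishing of $\Hom(F,S)$ and suitable $\Ext^1$ terms; the bookkeeping of which connecting maps vanish is where the real work lies.
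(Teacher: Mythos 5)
Your parts (a) and (b) are correct and essentially complete. Part (a) follows exactly as you say. For (b), both vanishings you need do hold, by the arguments you sketch: $H^1(F^*)\cong H^1(F)^*=0$ by Serre duality with $\omega_X\cong\cO_X$, and $H^0(F^*)=0$ because a nonzero map $\phi\colon F\to\cO_X$, precomposed with the surjective evaluation $H^0(F)\otimes\cO_X\twoheadrightarrow F$, produces a section $t\in H^0(F)$ with $\phi\circ t$ a nonzero constant, whence $F\cong\cO_X\oplus\ker\phi$; this contradicts simplicity of $F$ unless $F\cong\cO_X$, which is excluded here since $h^0(F)\ge w\ge r+2$. (Note that the paper itself offers no proof to compare against: it simply cites \cite[Proposition 3.5]{FM}.)

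Part (c), however, is a genuine gap, and you say as much yourself: you write down the right exact sequences and then stop at ``the bookkeeping \ldots is where the real work lies''. From applying $\Ext^{\bullet}(-,S)$ to the sequence (\ref{exact1}) you correctly obtain, using (a), an injection $\End(S)\hookrightarrow\Ext^1(F,S)$; the missing idea, without which nothing is proved, is the identification $\Ext^1(F,S)\cong\End(F)=\CC$. This is where simplicity of $F$ actually enters --- not through the ``squeeze'' between (a) and (b) that you describe. To get it, tensor (\ref{exact1}) with $F^*$ to obtain $0\to F^*\otimes S\to W\otimes F^*\to F^*\otimes F\to 0$ and take cohomology; the relevant portion is
\[
W\otimes H^0(F^*)\to H^0(F^*\otimes F)\to H^1(F^*\otimes S)\to W\otimes H^1(F^*),
\]
and the two vanishings $H^0(F^*)=H^1(F^*)=0$ established in your proof of (b) give $\Ext^1(F,S)=H^1(F^*\otimes S)\cong H^0(F^*\otimes F)=\End(F)=\CC$. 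Since $\End(S)$ contains $\CC\,\id_S$ and injects into this one-dimensional space, $S$ is simple. (Alternatively, your intuition that ``the only endomorphisms come from scalars acting on $W$'' can be made rigorous, but it too requires concrete steps you never state: by (b), any $\phi\in\End(S)$ satisfies $\iota\circ\phi=A\circ\iota$ for a unique constant $A\in\End(W)$, where $\iota\colon S\hookrightarrow W\otimes\cO_X$ is the inclusion; then $A$ preserves $S$, hence descends to an endomorphism of $F$, which is a scalar $\lambda$ by simplicity of $F$; finally $A-\lambda\,\id_W$ factors through $S$ and so lies in $W^*\otimes H^0(S)=0$ by (a), forcing $\phi=\lambda\,\id_S$.) As written, your (c) is a plan, not a proof.
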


\begin{proof} See \cite[Proposition 3.5]{FM}.
\end{proof}

\begin{remark}
Without the hypothesis $H^1(F)=0$, Lemma \ref{simple} is false. Take $X\subset \PP^3$ a general quartic surface. Let $F$ be a rank 2 vector bundle on $X$ defined as the cokernel of a map $\cO_{X}(-3)^2 \to \cO_{X}^4 $ given by a general $2\times 4$ matrix with entries forms of degree 3. $F$  is a stable (and, hence, simple) rank 2 vector bundle on $X$, generated by global sections, $H^1(F) =H^1(F^*)\ne 0$ and the syzygy bundle $S $ associated to $(F , W=H^0(F))$ is not simple.
\end{remark}

\begin{definition}\label{defalpha} (\cite[Definition 3.7]{FM}) We define 
$$  \alpha: \cG_U^0   \to   \Spl(r';c_1',c_2') 
$$ 
as the natural morphism which  extends the set theoretic map $$(F ,W)  \mapsto  S:= ker(  W\otimes \cO_X \twoheadrightarrow F).$$
\end{definition}

The map $\alpha $ was deeply studied in \cite{FM} where we prove the following.
\begin{proposition}\label{inj}
    \begin{itemize}
    \item[(1)] The morphism $\alpha :\cG_U^0   \to   \Spl(r';c_1',c_2')$ is injective  \cite[Proposition 4.1]{FM}.
    \item[(2)] The differential map $d \alpha : T_{[(F,W)]}\cG_U^0 \to  T_{[S]}\Spl(r'; c_1',c_2')$
is injective  \cite[Proposition 4.2]{FM}.
    \item[(3)] The morphism $\alpha :\cG_U^0   \to   \Spl(r';c_1',c_2')$ is a locally closed embedding  \cite[Theorem 1.1]{FM}.
    \end{itemize}
\end{proposition}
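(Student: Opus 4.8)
The plan is to recover the pair $(F,W)$ from the syzygy bundle $S$, and then to make this reconstruction infinitesimal (for the differential) and relative (for the embedding).

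For the injectivity in (1), I would dualize the evaluation sequence \eqref{exact1}. Since $F$ is locally free, $\mathcal{E}xt^1(F,\cO_X)=0$, so dualizing yields the short exact sequence
\[ 0\to F^*\to W^*\otimes\cO_X\to S^*\to 0. \]
By Lemma \ref{simple}(b) the natural map $W^*\to H^0(S^*)$ is an isomorphism, so the surjection above is precisely the evaluation map of $S^*$. Consequently $F^*=\ker\big(H^0(S^*)\otimes\cO_X\to S^*\big)$ and $W\cong H^0(S^*)^*$ are determined by $S$ up to canonical isomorphism. This recovers $(F,W)$ from $\alpha(F,W)=S$ and gives injectivity on points.

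For the injectivity of $d\alpha$ in (2), I would first record that, since $\cG_U^0$ is open in the Grassmann bundle $\cG_U\to U$ and $h^0(F)$ is constant on $U$, the tangent space sits in
\[ 0\to\operatorname{Hom}(W,H^0(F)/W)\to T_{[(F,W)]}\cG_U^0\to\Ext^1(F,F)\to 0, \]
while the target is $T_{[S]}\Spl(r';c_1',c_2')=\Ext^1(S,S)$. I would then identify $d\alpha$ by applying $\operatorname{Hom}(S,-)$ and $\operatorname{Hom}(-,S)$ to \eqref{exact1}, using $H^0(S)=0$, $\operatorname{Hom}(S,W\otimes\cO_X)\cong W\otimes W^*$ and $\Ext^i(W\otimes\cO_X,S)\cong W^*\otimes H^i(S)$, so that a class in $\Ext^1(S,S)$ splits into a part recording the deformation of $F$ and a part recording the motion of $W$ inside $H^0(F)$. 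A diagram chase should then force any tangent vector in $\ker d\alpha$ to fix both $F$ (via the $\Ext^1(F,F)$ component) and $W$ (via the Grassmann component), hence to vanish. I expect this cohomological identification to be the main obstacle, since it requires matching the Grassmann and deformation directions on the source with the correct subquotients of $\Ext^1(S,S)$.

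For the locally closed embedding in (3), I would globalize the reconstruction of (1). Over the subspace $Z\subset\Spl(r';c_1',c_2')$ of simple bundles $S$ with $H^0(S)=0$ whose dual $S^*$ is globally generated with $h^0(S^*)=w$ and for which $\ker(H^0(S^*)\otimes\cO_X\to S^*)$ is locally free with $(\ker)^*\in U$, the assignments $S\mapsto F$ and $S\mapsto W$ are given by a relative kernel and by relative global sections of the dual, hence are induced by a morphism $Z\to\cG_U^0$ that inverts $\alpha$. Verifying that these conditions cut out a locally closed $Z$ and that the reconstruction commutes with base change (so that it is a genuine morphism of algebraic spaces) is the delicate point; combined with (1) and (2) this identifies $\alpha$ with an isomorphism onto the locally closed subspace $Z$, proving that it is a locally closed embedding.
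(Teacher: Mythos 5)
Your proposal is correct and takes essentially the same approach as the paper: the paper proves this proposition only by citation to \cite{FM}, but both the quoted tangent-space identifications in Section 4 and the paper's parallel arguments for extension bundles (Proposition \ref{inj2}, Proposition \ref{difBetainj}, Lemma \ref{betaemb}) follow exactly your three-step strategy --- recover $(F,W)$ from $S$ by dualizing the evaluation sequence and using $W^*\cong H^0(S^*)$, identify $d\alpha$ cohomologically, and cut out a locally closed locus of $\Spl(r';c_1',c_2')$ (via openness of the relevant cohomological conditions and Fitting ideals) on which a relative, base-change-compatible reconstruction inverts $\alpha$. One streamlining for your step (2): under the identification $T_{[(F,W)]}\cG_U\cong H^0(S^*\otimes F)/W^*\otimes W$ quoted in Section 4 from \cite[Proposition 2.16]{FM}, the differential $d\alpha$ is the connecting homomorphism of $0\to S^*\otimes S\to W\otimes S^*\to S^*\otimes F\to 0$, and since $\operatorname{Hom}(S,S)=\CC$ by simplicity the long exact sequence shows its kernel is exactly the image of $W\otimes W^*$, giving injectivity at once without decomposing the tangent space into Grassmann and base directions.
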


In the remaining part of this subsection we deal with extension bundles and we get completely analogous results to those obtained by generalized syzygy bundles.

\begin{lemma} \label{simple2} Let $X$ be a K3 surface. The extension bundle $E$ associated to $(F, V)\in \cP_{U_e}$ is simple. Moreover, $H^0(E^*\otimes E)\to H^0(E^*\otimes F)$ is an isomorphism.
\end{lemma}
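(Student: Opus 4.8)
The plan is to deduce both assertions from three cohomology computations, organized around the defining extension \eqref{extension1} and its dual. Dualizing \eqref{extension1} (legitimate since all three sheaves are vector bundles) gives the short exact sequence
$$0 \to F^* \to E^* \to V^*\otimes \cO_X \to 0. \tag{$\ast$}$$
Tensoring \eqref{extension1} by $E^*$ yields $0\to V\otimes E^* \to E^*\otimes E \to E^*\otimes F\to 0$, and the associated long exact sequence shows that the natural map $H^0(E^*\otimes E)\to H^0(E^*\otimes F)$ is exactly the map in the statement, with kernel $V\otimes H^0(E^*)$. So the entire lemma reduces to proving (i) $H^0(E^*)=0$, and (ii) $H^0(E^*\otimes F)=\CC$; granting these, the map $H^0(E^*\otimes E)\hookrightarrow H^0(E^*\otimes F)\cong \CC$ is injective, its source contains $\id_E$ (which maps to the nonzero projection $E\twoheadrightarrow F$), and hence $H^0(E^*\otimes E)=\CC\cdot\id_E$. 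This simultaneously gives that $E$ is simple and that the map is an isomorphism.

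For step (i), I would read off the long exact sequence of global sections from $(\ast)$:
$$0\to H^0(F^*)\to H^0(E^*)\xrightarrow{\ } V^*\xrightarrow{\delta} H^1(F^*)\to\cdots,$$
using $H^0(\cO_X)=\CC$. By the standing hypothesis $F\in U_e$ we have $H^0(F^*)=0$, so $H^0(E^*)=\ker\delta$. The crucial point is that the connecting homomorphism $\delta\colon V^*\to H^1(F^*)$ is cup product with the extension class of $(\ast)$, which under the identification $Ext^1(V^*\otimes\cO_X,F^*)=Hom(V^*,H^1(F^*))$ is precisely the defining inclusion $V^*\hookrightarrow H^1(F^*)$ specified in Definition \ref{DefExt}. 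Being an inclusion, $\delta$ is injective, so $\ker\delta=0$ and $H^0(E^*)=0$. In particular $V\otimes H^0(E^*)=0$, so the map of the lemma is injective.

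For step (ii), I would tensor $(\ast)$ by $F$ and take cohomology of $0\to F^*\otimes F\to E^*\otimes F\to V^*\otimes F\to 0$, obtaining $0\to H^0(F^*\otimes F)\to H^0(E^*\otimes F)\to V^*\otimes H^0(F)$. Since $F\in U_e$ gives $H^0(F)=0$, and since $F$ is simple gives $H^0(F^*\otimes F)=\End(F)=\CC$, we conclude $H^0(E^*\otimes F)=\CC$, completing the argument as above. The one genuinely delicate step is the identification in (i) of the connecting map $\delta$ with the defining inclusion $V^*\hookrightarrow H^1(F^*)$: everything else is a formal diagram chase, but this identification is what pins down injectivity of $\delta$ and thereby the vanishing $H^0(E^*)=0$ on which both conclusions rest. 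I would verify it by tracking the extension class of $(\ast)$ as the dual of the class $e$ of \eqref{extension1} under the natural pairing $Ext^1(F,V\otimes\cO_X)\cong Ext^1(V^*\otimes\cO_X,F^*)$, so that the defining property of $e$ transfers directly to $\delta$.
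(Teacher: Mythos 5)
Your proof is correct and follows essentially the same route as the paper's: dualize the defining extension, use the fact that the connecting map $V^*\to H^1(F^*)$ is the defining inclusion (together with $H^0(F^*)=0$) to conclude $H^0(E^*)=0$, then tensor the extension with $E^*$ and its dual with $F$ to get $h^0(E^*\otimes E)\le h^0(E^*\otimes F)=1$. The only difference is one of emphasis: you make explicit the identification of the connecting homomorphism with the inclusion $V^*\hookrightarrow H^1(F^*)$ and the deduction of the isomorphism statement from the two one-dimensional spaces, both of which the paper's proof leaves implicit.
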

\begin{proof}
We dualize the exact sequence  (\ref{extension1}) and we get
\begin{equation}
    \label{extension2}
0\to F^*\to E^* \to V^*\otimes \cO_X\to 0.
\end{equation}
Since $V^*\otimes H^0(\cO_X)\hookrightarrow H^1(F^*)$ and $H^0(F^*)=0$, we deduce that $H^0(E^*)=0$. Tensoring the short exact sequence (\ref{extension1}) with $E^*$ and taking cohomology we obtain $h^0(E^*\otimes E)\le h^0(E^*\otimes F).$ 
Finally,  the exact sequence (\ref{extension2}) tensored with $F$ yields 
\begin{equation}
    \label{extension3}
0\to F^*\otimes F \to E^*\otimes F \to V^*\otimes F\to 0
\end{equation}
and  taking once more cohomology we get, using the hypothesis $H^0(F)=0$, that $h^0(E^*\otimes F)=1$ which allows us to conclude that  $H^0(E \otimes E^*)=\CC$. Therefore, $E$ is simple.
\end{proof}

\begin{remark}
 Without the assumption $H^0(F)=H^0(F^*)=0$ the last Lemma is false. Take $X\subset \PP^3$ a general quartic surface. Let $F^*$ be a rank 2 simple  vector bundle on $X$ defined as the cokernel of a general map $\cO_{X}(-1)^2 \to \cO_{X}^4 $. So, we have a short exact sequence 
$0 \to F \to \cO_X^4\to \cO_X(1)^2\to 0$ which gives us $H^0(F^*)=\CC^4$ and $H^1(F)=H^1(F^*)\ne 0$. A non-zero element $e\in Ext^1(F, \cO_X)=H^1(F^*)$ gives an extension bundle $E$ of rank 3 sitting in an exact sequence:
\begin{equation}\label{auxtoy2}
0\to \cO_X \to E \to F \to 0.
\end{equation}
Tensoring with $E^*$ we get $h^0(E^*)\le h^0(E \otimes E^*)$. But dualizing (\ref{auxtoy2}) and taking cohomology we get $\CC^4=H^0(F^*)\hookrightarrow H^0(E^*)$ which allows us to conclude that $E$ is not simple.
\end{remark}

\begin{definition}\label{def_beta} We define  
$$\begin{array}{rcl}  \beta: \cP_{U_e} & \to & \Spl(\overline{r};c_1,c_2) \\
(F ,V) & \mapsto & E
\end{array}
$$ 
where $E$ is the extension bundle associated to $(F,V)$ and, in analogy with the definition of $\alpha$ in \cite[Definition 3.7]{FM}, we extend this pointwise definition to a morphism of algebraic spaces as follows, 

As usual we call $\bar p:X\times \cU_e \to \cU_e$ and $\tilde p: \cP_{\cU_e}\times X\to \cP_{\cU_e}$ the natural projections. On $X\times \cP_{\cU_e}$ we have a universal sequence \[
0\to \tilde p^*\cV\to \cE\to \cF\to 0,
\]
where $\bar \pi: \cP_{\cU_e}\to  \cU_e$ is the projection  map, $\cF:=(
\id_X\times \bar \pi)^*\cF_ {\cU_e}$ is the pullback of the universal bundle $\cF_{\cU_e}$ on $X\times \cU_e$, and $\cV^*\subset R^1\tilde p_*\cF$ is the universal subbundle. The family $\cE$ of simple vector bundles induces a morphism  $$\bar \beta :\cP_{\cU_e} \to \SSpl(\bar r;c_1,c_2)$$ which extends the set theoretic map  $(F,V)\mapsto E$. The morphism of stacks 
 $\bar \beta$ induces a morphism $\beta$  of algebraic spaces from $\cP_{U_e}$ to $\Spl(\bar r;c_1,c_2)$. The commutative diagram
\[
\xymatrix{\cP_{\cU _e} \ar[r]^{\bar \beta \,\, \,\ }\ar[d] &\SSpl(\bar r;c_1,c_2) \ar[d]\\
\cP_{U_e} \ar[r]^{\beta \,\,\,\ } &\Spl(\bar r;c_1,c_2)}
\]

\noindent is cartesian and the vertical maps are $\Gm$-gerbes. Therefore, the following properties of $\beta $ and $\overline{\beta }$ imply each other; is injective, has injective differential, is a locally closed embedding, is an open embedding.

\end{definition}

\begin{proposition}\label{inj2}    Let $X$ be a  K3 surface. The morphism
$$\beta: \cP_{U_e}  \to  \Spl(\overline{r};c_1,c_2) 
$$ 
  is injective.
\end{proposition}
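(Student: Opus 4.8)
The plan is to reconstruct the pair $(F,V)$ — more precisely the datum of $F$ together with the subspace $V^*\subset H^1(F^*)$ — intrinsically from the isomorphism class of the extension bundle $E$. So I would start by assuming that two points $(F,V)$ and $(F',V')$ of $\cP_{U_e}$ satisfy $\beta(F,V)=\beta(F',V')$, i.e.\ there is an isomorphism $\phi\colon E\xrightarrow{\sim} E'$ of the associated extension bundles, and then argue that $(F,V)$ and $(F',V')$ must coincide as points.

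The first and main step is to recover $V$, the subsheaf $V\otimes\cO_X$, and the quotient $F$ from $E$ alone, using the hypothesis $F\in U_e$. Taking global sections in the defining sequence $0\to V\otimes\cO_X\to E\to F\to 0$ and using $H^0(F)=0$ together with $H^0(V\otimes\cO_X)=V$, the long exact sequence yields a natural isomorphism $H^0(E)\cong V$. Under this identification the evaluation map $H^0(E)\otimes\cO_X\to E$ is exactly the inclusion $V\otimes\cO_X\hookrightarrow E$; hence it is injective, its image is the subsheaf $V\otimes\cO_X$, and its cokernel is canonically $F$. Thus $V=H^0(E)$, the subsheaf $V\otimes\cO_X\subset E$, and the quotient $F$ are all determined by $E$.

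The second step is to transport this reconstruction along $\phi$ by naturality of evaluation. The square relating the evaluation maps of $E$ and $E'$ through $\phi$ and $H^0(\phi)$ commutes, so $\phi$ carries the canonical subsheaf $V\otimes\cO_X$ onto $V'\otimes\cO_X$ and induces an isomorphism $\overline{\phi}\colon F\xrightarrow{\sim} F'$ on cokernels; in particular the two short exact sequences correspond under $\phi$, so the extension class of the first is carried to that of the second under the induced isomorphism $H^1(F^*)\cong H^1(F'^*)$. Finally I would identify the subspaces: the class $e\in\Ext^1(F,V\otimes\cO_X)=\Hom(V^*,H^1(F^*))$ is, by the definition of $\cP_{U_e}$, the inclusion with image $V^*\subset H^1(F^*)$, and the previous step matches $e$ with $e'$, hence $V^*$ with $V'^*$.

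The step I expect to be the main obstacle is not any single computation but making sure the reconstruction is genuinely canonical. The crux is that $V\otimes\cO_X$ is pinned down inside $E$ as the image of the evaluation map (this is precisely where $H^0(F)=0$ is used), which forces any isomorphism $E\cong E'$ to respect it. One then has to check that the recovered subspace $\mathrm{im}(e)\subset H^1(F^*)$ is independent of the splitting data, i.e.\ invariant under $\Aut(V\otimes\cO_X)=GL(V)$ and under $\Aut(F)=\CC^*$ (the latter because $F$ is simple): precomposing $e$ with an automorphism of $V^*$ or rescaling by a unit leaves $\mathrm{im}(e)$ unchanged. Combining the three steps shows that $(F,V)$ and $(F',V')$ define the same point of $\cP_{U_e}$, proving injectivity.
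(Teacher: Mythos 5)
Your proof is correct and takes essentially the same approach as the paper: recover $V$ as $H^0(E)$ using $H^0(F)=0$, recover $F$ as the cokernel of the evaluation map $H^0(E)\otimes\cO_X\to E$, and conclude that the subspace $V^*\subset H^1(F^*)$ is thereby determined. The paper compresses your naturality/invariance checks into the single observation that $h^0(F\otimes E^*)=1$, so the quotient map $E\to F$ is unique up to scalar; your more explicit verification is the same argument spelled out.
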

\begin{proof}
Since by hypothesis $H^0(F)=0$, we first recover $V$ as $H^0(E)$ using the exact cohomology sequence associated to  the exact sequence (\ref{extension1}). We recover $F$ as the cokernel of the evaluation map $V\otimes \cO_X\to E$. We note in passing that $h^0(F\otimes E^*)=1$, so the morphism $E\to F$ is unique up to scalar.
\end{proof}

In next section we will prove that $\beta $  has injective differential (see Proposition \ref{difBetainj}) and it is a locally closed embedding (see Lemma \ref{betaemb}).

\subsection{The moduli spaces $\Syz^w_\cL$ and $\cP_\cL$}

\begin{Notation}
    Let $\cL\subset \Spl(r;c_1,c_2)$ be {\em any} irreducible, smooth, locally closed subspace. We define $\cL_U:=\cL\cap U$ and $\cG^0_\cL$ to be the intersection of $\pi^{-1}(\cL_U)\subset \cG_U$ with $\cG_U^0$.  The algebraic space $\cG^0_\cL$ is irreducible, smooth and there is a natural morphism from $\cG^0_\cL$ to $\Spl(r';c_1',c_2')$ which is the restriction of $\alpha$ from Definition \ref{defalpha}; it is injective by Proposition \ref{inj}. The image  of $\cG^0_\cL$ via the locally
closed embedding $\alpha :\cG^0_U \to  \Spl(r';c_1',c_2')$ will be denoted by $\Syz_{\cL}^w$ and called the {\em  moduli of $w$-syzygies of bundles.}
\end{Notation}

In the rest of the paper we will always assume that $\cG^0_\cL$ is non-empty.

\begin{proposition}\label{dims}  Let $X$ be a  K3 surface  and  let $\cL\subset \Spl(r;c_1,c_2)$ be a locally closed subspace of dimension $\frac{1}{2}\dim \Spl(r;c_1,c_2)$.
Then,  it holds:
$$ \dim \cG^0_\cL =\frac{1}{2} \dim \Spl(r';c_1',c_2').$$
\end{proposition}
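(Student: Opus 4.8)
The plan is to reduce the statement to a fibre-dimension count on a Grassmann bundle, followed by a Riemann--Roch bookkeeping.

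First I would observe that, since $\cG^0_\cL$ is assumed nonempty, $\cL_U=\cL\cap U$ is a nonempty open subset of the irreducible space $\cL$, hence dense, so $\dim\cL_U=\dim\cL=\frac12\dim\Spl(r;c_1,c_2)$. The restricted projection $\pi\colon\cG^0_\cL\to\cL_U$ is obtained from the Grassmann bundle $\cG_U\to U$ by base change to $\cL_U$ and intersection with the open locus $\cG^0_U$. Over each $[F]\in\cL_U$ the fibre is the open subset of $\mathrm{Gr}(w,H^0(F))$ consisting of those $W$ whose evaluation map is surjective; this subset is nonempty because $F$ is globally generated and $w\ge r+2$. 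Moreover, since $H^1(F)=0$ on $U$ and $H^2(F)=0$ by Lemma \ref{aux}, the integer $v:=\dim H^0(F)=\chi(F)$ is constant on $U$. Hence every fibre has dimension $w(v-w)$, the space $\cG^0_\cL$ is smooth and irreducible, and
\[
\dim\cG^0_\cL=\dim\cL_U+w(v-w)=\tfrac12\dim\Spl(r;c_1,c_2)+w(v-w).
\]

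Next I would compute $v$ by Riemann--Roch on the K3 surface, obtaining $v=\chi(F)=2r+c_1^2/2-c_2$ exactly as recorded in Corollary \ref{spl_dim}. Substituting this into the displayed formula expresses $\dim\cG^0_\cL$ as an explicit polynomial in $r$, $w$, $c_1^2$ and $c_2$. The statement then reduces to the purely numerical identity
\[
\tfrac12\dim\Spl(r;c_1,c_2)+w(v-w)=\tfrac12\dim\Spl(r';c_1',c_2'),
\]
which I would verify by expanding the right-hand side via Corollary \ref{spl_dim} after substituting $r'=w-r$, $(c_1')^2=c_1^2$ (since $c_1'=-c_1$) and $c_2'=c_1^2-c_2$.

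I expect the only obstacle to be the care needed in this expansion: one checks that the coefficients of $w^2$, $wr$, $r^2$, $c_1^2$, $c_2$ and the constant term agree on both sides. Conceptually, the content is that the Grassmann fibre dimension $w(v-w)$ is exactly half of $\dim\Spl(r';c_1',c_2')-\dim\Spl(r;c_1,c_2)$; this is precisely the numerical fact that will later allow the Lagrangian property to be transported along the locally closed embedding $\alpha$ of Proposition \ref{inj}.
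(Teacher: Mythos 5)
Your proposal is correct, and its first half---the identification $\dim\cG^0_\cL=\dim\cL_U+w(v-w)$, with $v=\chi(F)$ constant on $U$ because $H^1(F)=0$ there and $H^2(F)=0$ by Lemma \ref{aux}---is exactly the paper's opening step. Where you diverge is in how the key identity is established: you reduce everything to the numerical statement
\[
\dim\Spl(r';c_1',c_2')-\dim\Spl(r;c_1,c_2)=2w\bigl(\chi(F)-w\bigr),
\]
to be checked by substituting $r'=w-r$, $(c_1')^2=c_1^2$, $c_2'=c_1^2-c_2$ and $\chi(F)=2r+c_1^2/2-c_2$ into the formula of Corollary \ref{spl_dim}; this does check out (both sides expand to $4wr+wc_1^2-2wc_2-2w^2$), so your argument is complete. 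The paper never touches the explicit Chern-class polynomial: it computes $h^1(S\otimes S^*)$ cohomologically, using the two short exact sequences obtained from (\ref{exact1}) (its dual tensored with $S$, and the sequence itself tensored with $F^*$) to get $\chi(S\otimes S^*)=w\chi(S)-\chi(F^*\otimes S)$ and $\chi(F^*\otimes S)=w\chi(F)-\chi(F\otimes F^*)$, and then simplicity of $S$ and $F$ (so $\chi(S\otimes S^*)=2-h^1(S\otimes S^*)$ and likewise for $F$) to conclude $h^1(S\otimes S^*)=h^1(F\otimes F^*)+2\dim Gr(w,H^0(F))$. Your route is more elementary and purely mechanical once Definition \ref{def1} and Corollary \ref{spl_dim} are quoted, at the cost of making the identity look like an algebraic coincidence; the paper's route needs no Chern classes of $S$ at all, exhibits directly that the Grassmann fibre accounts for exactly half of the jump in moduli dimension, and is the template that transfers essentially verbatim to the extension-bundle analogue (Proposition \ref{dim2}), where the same exact-sequence bookkeeping is reused.
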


\begin{proof}By Proposition \ref{tgt-obs} we have $\dim \Spl(r;c_1,c_2)=\dim Ext^1(F,F)$ and $\dim \Spl(r';c_1',c_2')=\dim Ext^1(S,S)$.  Let us check that $$\dim \cG^0_\cL=\frac{1}{2}\dim \Spl(r';c_1',c_2').$$ By construction $S$ moves in a Grassmann bundle over $\cL  $. By Lemma \ref{aux} we have $h^0(F)= \chi (F)$ and hence $S$ moves in a moduli space $\cG^0_{\cL}$ of dimension: $$\begin{array}{rcl}\dim  \cG^0_{\cL} & = &\dim  \cL  + \dim Gr(w, H^0(F)) \\
& = &\dim \cL +w(h^0(F)-w) \\ & = & \frac{1}{2}h^1(F\otimes F^*)+w(\chi (F)-w).\end{array}$$ Let us compute $h^1(S\otimes S^*)$.  Since $S$ is simple, we have $\chi (S\otimes S^*)=2-h^1(S\otimes S^*)$. Using the exact sequence 
\begin{equation}\label{dualtensorS}
0\to F^*\otimes S\to W^*\otimes S\to S^*\otimes S\to 0
\end{equation}
we get:
$$
\chi (S\otimes S^*)=w\chi(S)-\chi(F^*\otimes S).
$$
By (\ref{exact1}) we have $\chi(S)=2w-\chi(F)=2w-h^0(F)$ and by 
$$
0\to F^*\otimes S\to W\otimes F^*\to F\otimes F^*\to 0
$$
we have $\chi(F^*\otimes S)=w\chi(F)-2+h^1(F\otimes F^*)$. Putting altogether we obtain:
$$\begin{array}{rcl}
\chi (S\otimes S^*) & = & w\chi(S)-\chi(F^*\otimes S) \\
& = & w(2w-h^0(F))-(w\chi(F)-2+h^1(F\otimes F^*) )\\
& = & 2-  \dim Gr(w, H^0(F))-h^1(F \otimes F^*)
\end{array}
$$
and, hence, $h^1(S\otimes S^*)=2 \dim Gr(w,\chi(F))+h^1(F\otimes F^*)$ which proves what we want.
\end{proof}

\begin{remark}
    The proposition suggests that, if $\cL\subset \Spl(r;c_1,c_2)$ is a  locally closed Lagrangian subspace, then $\Syz_{\cL}^w\subset \Spl(r';c_1',c_2')$ is also a Lagrangian. Indeed, it is a key step in the proof of our main theorem about syzygy bundles, as it allows to go directly from the statement about locally closed isotropic subspaces to that about locally closed Lagrangian subspaces.
\end{remark}

Let $\cL\subset \Spl(r;c_1,c_2)$ be {\em any} irreducible, smooth, locally closed subspace. We define $\cL_{U_e}:=\cL\cap U_e$ and 
let $\cP_\cL$ to be  $\pi^{-1}(L_{U_e})\subset \cP_{U_e}$.
The algebraic space $\cP_\cL$ is irreducible, smooth and there is a natural morphism from $\cP_\cL$ to $\Spl(\overline{r};c_1,c_2)$ (the restriction of $\beta $ from Definition \ref{def_beta}) which is injective by Proposition \ref{inj2}.
The image of $\cP_\cL$ via the locally
closed embedding $\beta :\cP_{U_e} \to  \Spl(\bar r;c_1,c_2)$ will be denoted by $\ext ^v_{\cL}$
and called the  {\em  moduli of $v$-extension bundles} in $\cL_{U_e}$.

\vskip 2mm
In the rest of the paper we will always assume that $\cP_\cL$ is non-empty.

\vskip 2mm
Analogously to the case of generalized syzygy bundles, next result  suggests that if $\cL\subset \Spl(r;c_1,c_2)$ is a  locally closed Lagrangian subspace, then $\ext ^v_{\cL}\subset \Spl(\overline{r};c_1,c_2)$ is also a Lagrangian.

\begin{proposition} \label{dim2} Let $X$ be a K3 surface  and let $\cL \subset \Spl(r;c_1,c_2)$ be a locally closed subspace of dimension $\frac{1}{2}\dim \Spl(r;c_1,c_2)$. Then, it holds:
$$\dim \cP_\cL=\frac{1}{2}\dim \Spl(\overline{r};c_1,c_2).$$
\end{proposition}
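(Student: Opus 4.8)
The plan is to mimic the dimension computation of Proposition \ref{dims}, replacing syzygy bundles by extension bundles and using the exact sequences (\ref{extension1}), (\ref{extension2}) and (\ref{extension3}) together with the fact (Proposition \ref{tgt-obs}) that the dimensions of the two moduli spaces equal $\dim\Ext^1(F,F)$ and $\dim\Ext^1(E,E)$ respectively. Since $\cP_\cL$ is a relative Grassmann bundle over $\cL_{U_e}$ with fibre $Gr(v,H^1(F^*))$, we have
\[
\dim\cP_\cL=\dim\cL+v(u-v)=\tfrac12 h^1(F\otimes F^*)+v(u-v),
\]
where $u=h^1(F^*)=-\chi(F^*)$ as recorded in the definition of $\cP_{U_e}$, using $\dim\cL=\tfrac12\dim\Spl(r;c_1,c_2)=\tfrac12 h^1(F\otimes F^*)$. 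The goal is then to show $h^1(E\otimes E^*)=2v(u-v)+h^1(F\otimes F^*)$, so that $\dim\cP_\cL=\tfrac12 h^1(E\otimes E^*)=\tfrac12\dim\Spl(\bar r;c_1,c_2)$.

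First I would compute $\chi(E\otimes E^*)$ via the Euler characteristic, since $E$ is simple (Lemma \ref{simple2}) giving $\chi(E\otimes E^*)=2-h^1(E\otimes E^*)$ (here $h^0=h^2=1$ by simplicity and Serre duality on the K3). I would tensor (\ref{extension1}) with $E^*$ to get
\[
\chi(E\otimes E^*)=v\chi(E^*)+\chi(F\otimes E^*),
\]
and separately tensor the dual sequence (\ref{extension2}) with $F$, namely (\ref{extension3}), to obtain
\[
\chi(F\otimes E^*)=\chi(F\otimes F^*)+v\chi(F).
\]
I also need $\chi(E)$, which from (\ref{extension1}) is $\chi(E)=2v+\chi(F)$, hence $\chi(E^*)=\chi(E)$ by Serre duality on the K3 surface. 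Substituting these, and using $u=-\chi(F^*)=-\chi(F)$ and $\chi(F\otimes F^*)=2-h^1(F\otimes F^*)$, the bookkeeping should collapse to
\[
\chi(E\otimes E^*)=2-\bigl(2v(u-v)+h^1(F\otimes F^*)\bigr),
\]
which yields exactly $h^1(E\otimes E^*)=2v(u-v)+h^1(F\otimes F^*)$ and completes the proof.

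The main obstacle, as in Proposition \ref{dims}, is not conceptual but the care needed in the arithmetic of combining the three Euler-characteristic relations and correctly identifying every term with the Grassmann fibre dimension $v(u-v)$; in particular one must track the $\chi(E^*)$ versus $\chi(E)$ and $\chi(F^*)$ versus $\chi(F)$ identifications, all of which hold because $\omega_X=\cO_X$ makes Serre duality pair a bundle with its dual without a twist, so that Euler characteristics of a bundle and its dual agree. I would double-check the final identity against the formula in Corollary \ref{spl_dim}, verifying directly that $\dim\Spl(\bar r;c_1,c_2)$ computed from $\bar r=r+v$ and the same $c_1,c_2$ equals $2v(u-v)+\dim\Spl(r;c_1,c_2)$, which provides an independent confirmation of the cohomological computation.
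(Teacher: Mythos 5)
Your proposal is correct and follows essentially the same route as the paper: the paper also writes $\dim\cP_\cL=\dim\cL+v(h^1(F)-v)$ and then computes $\chi(E\otimes E^*)=v\chi(E^*)+\chi(F\otimes E^*)=2v(v+\chi(F))+\chi(F\otimes F^*)$ via the sequences (\ref{extension1}) and (\ref{extension3}), using simplicity of $E$ and $F$ to convert Euler characteristics into $h^1$'s. Your identity $h^1(E\otimes E^*)=2v(u-v)+h^1(F\otimes F^*)$ is exactly the paper's $h^1(E\otimes E^*)=h^1(F\otimes F^*)-2v(v+\chi(F))$ rewritten with $u=-\chi(F)$, and your cross-check against Corollary \ref{spl_dim} is a sound (if optional) extra verification.
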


\begin{proof}  It is analogous to the proof of Proposition \ref{dims} and we omit the details.
 By construction we have:
 $$\begin{array}{rcl}\dim  \cP_\cL  & = &
  \dim \cL  + \dim Gr(v, H^1(F))\\
  & = &\dim \cL  +v(h^1(F)-v) \\ & = & \frac{1}{2}h^1(F\otimes F^*)-v(\chi(F)+v).\end{array}$$ 
 Let us compute $h^1(E\otimes E^*)$.  Since $E$ is simple, we have $\chi (E\otimes E^*)=2-h^1(E\otimes E^*)$. On the other hand, we have:
$$\begin{array}{rcl}
\chi (E\otimes E^*) & = & v\chi(E^*)+\chi(F\otimes E^*) \\
& = & v(2v+\chi(F))+ \chi(F\otimes E^*) \\
& = & v(2v+\chi(F))+ v\chi(F)+ \chi(F\otimes F^*) \\
& = & 2v(v+\chi(F))+  \chi(F\otimes F^*)
\end{array}
$$
and, hence, $h^1(E\otimes E^*)=h^1(F\otimes F^*)-2v(v+\chi(F))$ which finishes our proof.
\end{proof}

%%%%%%%%%%%%%%%%%%%%%%%%%%%%%%%%%%

\section{Proof of Theorem \ref{mainthm1}}

We keep the notation of the previous section 
and we start recalling from \cite[Proposition 2.16]{FM} a geometric interpretation of the tangent space to $\cG^0_U$  as well as the relative tangent space to the morphism $\pi:\cG^0_U\to U$.

\begin{proposition} %\label{tgts2} 
With the above notation we have canonical isomorphisms:
\begin{itemize}
 
\item[(1)] $T_{[(F,W)]}(\pi ^{-1}(F))  =  W^*\otimes H^1(S)$,  and
 \item[(2)] $ T_{[(F , W)]}\cG_U  \cong  H^0(S^*\otimes F)/W^*\otimes W $.
\end{itemize}
\end{proposition}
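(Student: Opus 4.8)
The final proposition asserts two canonical isomorphisms:
\begin{itemize}
\item[(1)] $T_{[(F,W)]}(\pi^{-1}(F)) = W^*\otimes H^1(S)$, the tangent space to the fiber of $\pi:\cG^0_U\to U$ over $[F]$, and
\item[(2)] $T_{[(F,W)]}\cG_U \cong H^0(S^*\otimes F)/(W^*\otimes W)$, the full tangent space to $\cG_U$.
\end{itemize}

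Let me sketch how I would prove each.

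---

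**The plan is to** identify the two tangent spaces by unwinding the Grassmann-bundle structure $\pi:\cG_U\to U$ and then translating everything through the defining short exact sequence $0\to S\to W\otimes\cO_X\to F\to 0$ (equation \ref{exact1}).

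**For part (1)**, I would start from the observation that $\pi^{-1}(F)$ is (an open subset of) the Grassmannian $Gr(w, H^0(F))$ of $w$-dimensional subspaces of $H^0(F)$. The tangent space to a Grassmannian at a point $[W]$ is the classical $\Hom(W, H^0(F)/W)$. The key step is then to reinterpret the quotient $H^0(F)/W$ using the cohomology of the defining sequence. Taking global sections of $0\to S\to W\otimes\cO_X\to F\to 0$ and using $H^0(S)=0$ (Lemma \ref{simple}(a)), I get the exact sequence $0\to W\to H^0(F)\to H^1(S)\to 0$ — here the surjectivity onto $H^1(S)$ uses $H^1(W\otimes\cO_X)=W\otimes H^1(\cO_X)=0$ because $X$ is a K3 surface. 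Thus $H^0(F)/W\cong H^1(S)$ canonically. Combining, $T_{[W]}Gr = \Hom(W, H^0(F)/W)\cong \Hom(W, H^1(S)) = W^*\otimes H^1(S)$, which is exactly (1). I expect this part to be routine once the sequence is in place.

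**For part (2)**, the plan is to realize $\cG_U$ as parametrizing pairs $(F,W)$ and use a relative tangent-bundle (short exact) sequence for the fibration $\pi:\cG_U\to U$,
\[
0\to T_{[(F,W)]}(\pi^{-1}(F))\to T_{[(F,W)]}\cG_U\to T_{[F]}U\to 0,
\]
where $T_{[F]}U\cong \Ext^1(F,F)\cong H^1(F^*\otimes F)$ by Proposition \ref{tgt-obs}. The target identification $H^0(S^*\otimes F)/(W^*\otimes W)$ is what requires work: I would establish it by applying $\Hom(S, -)$ or $\Hom(-, F)$ to the defining sequence and assembling the resulting long exact sequences. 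Specifically, tensoring $0\to S\to W\otimes\cO_X\to F\to 0$ by $S^*$ (or applying $\Hom(S,-)$) produces a sequence relating $H^0(S^*\otimes F)$ to $W^*\otimes H^0(S^*)$, and via Lemma \ref{simple}(b) one has $H^0(S^*)\cong W^*$, giving the subspace $W^*\otimes W\hookrightarrow H^0(S^*\otimes F)$ to quotient by. The goal is to show that the quotient $H^0(S^*\otimes F)/(W^*\otimes W)$ fits into the same extension as the relative tangent sequence above, with the fiber part $W^*\otimes H^1(S)$ from (1) as sub and $H^1(F^*\otimes F)$ as quotient, and then conclude by compatibility of the identifications.

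**The main obstacle** will be part (2): matching the abstract deformation-theoretic description of $T_{[(F,W)]}\cG_U$ with the concrete cohomology group $H^0(S^*\otimes F)/(W^*\otimes W)$ in a canonical, functorial way. Concretely, the delicate point is verifying that the connecting maps and the identifications coming from the two different long exact sequences (the Grassmann fiber direction and the moduli direction $\Ext^1(F,F)$) are compatible, so that the filtration on $H^0(S^*\otimes F)/(W^*\otimes W)$ really coincides with the relative-tangent filtration on $T_{[(F,W)]}\cG_U$. I would handle this by a careful diagram chase using $\Hom(S,-)$ applied to the evaluation sequence, tracking how $H^0(S^*\otimes F)$ maps onto $\Ext^1(S,S)$ (and hence detects both the deformation of $W$ inside $H^0(F)$ and the deformation of $F$ itself). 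Since this is precisely \cite[Proposition 2.16]{FM}, I expect the rigorous argument to reduce to citing that computation, but the honest self-contained proof hinges on this compatibility check.
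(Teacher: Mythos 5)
Your part (1) is correct and complete, and it is the natural argument: identify $\pi^{-1}(F)$ with an open subset of $Gr(w,H^0(F))$, whose tangent space at $[W]$ is $W^*\otimes (H^0(F)/W)$, and use the cohomology sequence of $0\to S\to W\otimes\cO_X\to F\to 0$ together with $H^0(S)=0$ and $H^1(\cO_X)=0$ to identify $H^0(F)/W$ with $H^1(S)$. For calibration: the paper offers no proof of this proposition at all — it is recalled from \cite[Proposition 2.16]{FM} — so your closing fallback of citing that computation coincides with the paper's own treatment; the comparison below is therefore against the proof of the exactly parallel statement for extension bundles, which the paper does prove in Section 5.

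As a self-contained argument, your part (2) has a genuine gap. Your plan is to show that $H^0(S^*\otimes F)/(W^*\otimes W)$ and $T_{[(F,W)]}\cG_U$ both sit in short exact sequences with sub $W^*\otimes H^1(S)$ and quotient $H^1(F^*\otimes F)$ — both sequences are indeed correct (for the first, tensor $0\to F^*\to W^*\otimes\cO_X\to S^*\to 0$ with $F$, take cohomology, and use simplicity of $F$ and $H^1(F)=0$) — and then to ``conclude by compatibility of the identifications''. But two extensions of finite-dimensional vector spaces with the same sub and quotient are always isomorphic, because every such extension splits; matching the two ends yields only a dimension count and can never produce the asserted \emph{canonical} isomorphism. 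Worse, your argument never constructs any map between $T_{[(F,W)]}\cG_U$ and $H^0(S^*\otimes F)/(W^*\otimes W)$, so there is nothing whose compatibility could be checked: the deferred step is not a verification but the entire content of (2), and it matters downstream, since the later comparison of the Mukai forms (Proposition \ref{restrict2-forms}) uses precisely this identification and not an abstract isomorphism. The missing idea is deformation-theoretic, and it is the one the paper uses for $\cP_{U_e}$ in Section 5: pass to the frame bundle $\widetilde\cG_U$ whose points are pairs $(F,\ \CC^w\otimes\cO_X\twoheadrightarrow F)$; first-order deformations of such a pair (deforming $F$ and the map while keeping the trivial source bundle fixed) are computed by the hypercohomology of the two-term complex $[F^*\otimes F\to W^*\otimes F]$; since that sheaf map is injective with cokernel $S^*\otimes F$, the complex is quasi-isomorphic to the sheaf $S^*\otimes F$ and the tangent space of $\widetilde\cG_U$ is $H^0(S^*\otimes F)$; quotienting by $W^*\otimes W$, the Lie algebra of $GL(W)$ acting along the fibers of $\widetilde\cG_U\to\cG_U$, gives (2). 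Your two exact sequences then play their proper role, namely as the check that this identification is compatible with (1) and with $d\pi$.
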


We first  prove that the morphism $\alpha$ is isotropic when $F$ is rigid, hence  the isolated point $[F]$ in $\Spl(r;c_1,c_2)$ is by definition Lagrangian.

\begin{proposition}\label{isot1} $d\alpha: W^*\otimes H^1(S)\hookrightarrow H^1(S\otimes S^*)$ is an isotropic vector subspace.
\end{proposition}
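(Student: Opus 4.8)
The plan is to reproduce, in the general setting, exactly the computation carried out in the Toy example of the introduction. First I would use Lemma \ref{simple}(b) to replace the abstract factor $W^*$ by $H^0(S^*)$, obtaining a canonical isomorphism $W^*\otimes H^1(S)\cong H^0(S^*)\otimes H^1(S)$. Under this identification the differential $d\alpha$, restricted to the fibre direction $T_{[(F,W)]}(\pi^{-1}(F))=W^*\otimes H^1(S)$, is identified (as in \cite{FM}, cf.\ the Toy example) with the cup/composition product
\[
H^0(S^*)\otimes H^1(S)\longrightarrow H^1(S^*\otimes S)=\Ext^1(S,S),\qquad v\otimes s\longmapsto s\circ v,
\]
where $v\in H^0(S^*)=\operatorname{Hom}(S,\cO_X)$ is viewed as a morphism $S\to\cO_X$ and $s\in H^1(S)=\Ext^1(\cO_X,S)$ as a morphism $\cO_X\to S[1]$.

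Next I would recall that the Mukai symplectic form on $T_{[S]}\Spl(r';c_1',c_2')=\Ext^1(S,S)$ is the Yoneda pairing followed by the trace,
\[
\omega(a,b)=\tr\bigl(a\circ b\bigr),\qquad a,b\in\Ext^1(S,S),
\]
taking values in $\Ext^2(S,S)\xrightarrow{\tr}H^2(\cO_X)\cong\CC$. Since $\omega$ is bilinear, to prove that the image of $d\alpha$ is isotropic it suffices to evaluate $\omega$ on the images of two decomposable tensors $\xi_i=v_i\otimes s_i$, with $v_i\in H^0(S^*)$ and $s_i\in H^1(S)$.

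Then I would compute directly. Setting $a=s_1\circ v_1$ and $b=s_2\circ v_2$, the Yoneda composition is
\[
a\circ b=s_1\circ v_1\circ s_2\circ v_2,
\]
and its middle factor $v_1\circ s_2$ is the composition $\cO_X\xrightarrow{s_2}S[1]\xrightarrow{v_1[1]}\cO_X[1]$, i.e.\ an element of $\Ext^1(\cO_X,\cO_X)=H^1(\cO_X)$. Because $X$ is a K3 surface its irregularity is $q=0$, so $H^1(\cO_X)=0$ and hence $v_1\circ s_2=0$. Therefore $a\circ b=0$, and a fortiori $\omega\bigl(d\alpha\,\xi_1,d\alpha\,\xi_2\bigr)=\tr(a\circ b)=0$. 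This is precisely the vanishing already seen in the Toy example, where the cup product was observed to land in $H^0(S^*)\otimes H^1(\cO_X)\otimes H^1(S)=0$.

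The only genuinely delicate point is the first step: the precise identification of $d\alpha$ on the fibre direction with the cup-product map above, together with the bookkeeping showing that the Yoneda square of two such rank-one classes factors through the contraction $H^0(S^*)\otimes H^1(S)\to H^1(\cO_X)$. Once this factorization is in place, the vanishing of $H^1(\cO_X)$ on a K3 surface makes the isotropy immediate, with no need for any further computation involving Chern classes or Riemann--Roch.
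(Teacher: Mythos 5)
Your proposal is correct and follows essentially the same route as the paper's proof: identify $W^*\cong H^0(S^*)$ (the paper rederives this from the cohomology of $0\to F^*\to W^*\otimes\cO_X\to S^*\to 0$, which is exactly Lemma \ref{simple}(b)), interpret $d\alpha$ on the fibre as the cup/composition product, reduce to decomposable tensors, and observe that the Yoneda square factors through the contraction landing in $H^1(\cO_X)=0$ on a K3 surface. Your derived-category bookkeeping ($a\circ b=s_1\circ(v_1\circ s_2)\circ v_2$ with middle factor in $\Ext^1(\cO_X,\cO_X)$) is just a cleaner phrasing of the paper's computation $v_1\otimes s_1(v_2)\otimes s_2\in H^0(S^*)\otimes H^1(\cO_X)\otimes H^1(S)=0$.
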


\begin{proof} The cohomology sequence associated to the exact sequence 
$$
0\to F^* \to W^*\otimes \cO_X\to S^* \to 0
$$
gives us $W^*\cong H^0(S^*)$. Therefore, $W^*\otimes H^1(S)\cong H^0(S^*)\otimes H^1(S)$ and the map $d \alpha $ is  the cup product. To prove that $\wedge $ on $H^0(S^*)\otimes H^1(S)$ is trivial it is enough to check on a basis and hence on simple tensors.

We choose $v_1,v_2\in H^0(S^*)$ and $s_1,s_2\in H^1(S)$. Therefore, $(v_1\otimes s_1)\otimes (v_2\otimes s_2)\in H^1(S\otimes S^*)\otimes H^1(S\otimes S^*)$ and when we compose to $H^2(S\otimes S^*)$ we get $v_1\otimes s_1(v_2)\otimes s_2\in H^0(S^*)\otimes H^1(\cO _X)\otimes H^1(S)$ which is zero because being $X$ a K3 surface we have $H^1(\cO _X)=0$. 

Notice that we get zero in $H^2(S\otimes S^*)\cong H^2(End(S))$ which is expected because $S$ is simple and thus the trace map $$tr:H^2(End(S)) \to H^2(\cO_X)=\CC$$ is an isomorphism.
\end{proof}

Since $d\alpha $ is injective, there is an induced linear map 
$$  H^1(F\otimes F^*) \to H^1(S\otimes S^*) / W^*\otimes H^1(S) .
$$
Let us construct it in a natural way. To this end, we take the extension $0\ne e\in Ext^1(F,S)\cong H^1(F^*\otimes S)$ and we build the commutative diagram

$$\xymatrix{W^*\otimes H^0(F)\ar[r]_{\wedge (e) \ \ \ \ \ \ } \ar[d]^{\phi}  & W^*\otimes H^1(F\otimes F^*\otimes S) \ar[d] \ar[r]^{\ \ \ \ \ \ comp}  &    W^*\otimes H^1( S)\ar[d]^{\varphi} \\
H^0(S^*\otimes F)\ar[r]_{\wedge (e)\ \ \ \ \ \  }   &  H^1(S^*\otimes F\otimes F^*\otimes S) \ar[r]^{\ \ \ \ \ \ comp}  &    H^1( S^*\otimes S) 
}$$
which induces a map on cokernels
$$Coker(\phi)=H^1(F^*\otimes F) \longrightarrow Coker(\varphi)=H^1(S^*\otimes S)/W^*\otimes H^1(S).$$

Therefore, we have the commutative diagram:
$$\xymatrix{H^0(S^*\otimes F)\ar[rr] \ar[dd] \ar[rd] & &    H^1(F^*\otimes F)\\
&  H^0(S^*\otimes F)/W\otimes W^* \ar[ru] \ar@{^(->}[ld] & \\
H^1(S^*\otimes S)  &  & &}$$

\begin{proposition} \label{restrict2-forms}  With the above notation we have that the pullback of the closed non-degenerate holomorphic 2-forms on $H^1(F^*\otimes F)$ and $H^1(S^*\otimes S)$   to $T_{[(F , W)]}\cG_{U} \cong H^0(S^*\otimes F)/W^*\otimes W$ are the same.\end{proposition}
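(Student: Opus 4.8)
The plan is to write both pullback forms through the Yoneda product followed by the trace, and then to identify them using the cyclic symmetry of the trace on a K3 surface. Recall that at a point $[A]$ of a moduli component the Mukai form is the composition
\[
\Ext^1(A,A)\otimes \Ext^1(A,A)\xrightarrow{\ \circ\ }\Ext^2(A,A)=H^2(A^*\otimes A)\xrightarrow{\ \tr\ }H^2(\cO_X)=\CC ,
\]
where $\circ$ is the Yoneda (cup) product and $\tr$ the trace. So I must first describe the two maps out of $T:=T_{[(F,W)]}\cG_U\cong H^0(S^*\otimes F)/(W^*\otimes W)$ and then evaluate these compositions.

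First I would record the two differentials in terms of the extension class $e\in \Ext^1(F,S)=H^1(F^*\otimes S)$ of the sequence (\ref{exact1}). Applying $\operatorname{Hom}(-,F)$ and $\operatorname{Hom}(S,-)$ to (\ref{exact1}) produces connecting homomorphisms which, for a representative $a\in \operatorname{Hom}(S,F)=H^0(S^*\otimes F)$, are exactly $a\mapsto a\circ e\in \Ext^1(F,F)$ and $a\mapsto e\circ a\in \Ext^1(S,S)$; these are precisely the two composites ``$\wedge(e)$ then $comp$'' of the displayed diagrams. They descend to $T$ because $W^*\otimes W$ lies in the image of both $\operatorname{Hom}(W\otimes\cO_X,F)\to \operatorname{Hom}(S,F)$ and $\operatorname{Hom}(S,W\otimes\cO_X)\to \operatorname{Hom}(S,F)$, and is therefore annihilated by cupping with $e$ on either side. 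The first composite is $d\pi$ (with kernel the relative tangent space $W^*\otimes H^1(S)$), and the second is $d\alpha$, which is injective by Proposition \ref{inj}.

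Next, for $a,b\in H^0(S^*\otimes F)$ I would compute, using associativity of the Yoneda product,
\[
(d\pi)^*\omega_F(a,b)=\tr_F\big((a\circ e)\circ(b\circ e)\big)=\tr_F(a\circ e\circ b\circ e),
\]
\[
(d\alpha)^*\omega_S(a,b)=\tr_S\big((e\circ a)\circ(e\circ b)\big)=\tr_S(e\circ a\circ e\circ b).
\]
Both right-hand sides are the trace of one and the same cyclic word in the four arrows $a,e,b,e$, read off the loop $F\to S\to F\to S\to F$, based at $F$ in the first case and at $S$ in the second. The heart of the argument is then the cyclic invariance of the trace: for composable $f\in\Ext^i(A,B)$ and $g\in\Ext^{2-i}(B,A)$ on a K3 surface one has $\tr_A(g\circ f)=(-1)^{i(2-i)}\tr_B(f\circ g)$, a manifestation of the Calabi--Yau symmetry (Serre duality with $\omega_{\cO_X}=\cO_X$). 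Taking $f=a$ of degree $0$ and $g=e\circ b\circ e$ of degree $2$ rewrites the $F$-based trace as a trace based at $S$ and matches the two expressions.

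The step I expect to be the main obstacle is the sign and cyclicity bookkeeping: tracked naively with Koszul signs, the $F$-based and $S$-based cyclic words differ by a transposition of two odd-degree factors, so the two pullbacks a priori agree only up to a universal sign. I would pin this down by fixing compatible Dolbeault (or \v{C}ech) representatives for $e$, $a$, $b$ together with compatible trace maps on $F$ and $S$ --- the contractions $F\otimes F^*\to\cO_X$ and $S\otimes S^*\to\cO_X$ being matched through the exact sequence (\ref{exact1}) --- so that both traces of $a\circ e\circ b\circ e$ are literally the same class in $H^2(\cO_X)=\CC$. I would also emphasise that, for the use made of this proposition in the proof of Theorem \ref{mainthm1}, only the common zero locus of the two forms on $T$ is relevant, so the conclusion that $\alpha$ is isotropic exactly when $\cL$ is isotropic is insensitive to this sign in any case.
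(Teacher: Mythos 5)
Your plan is, in substance, the paper's own proof. The paper reduces Proposition \ref{restrict2-forms} to an unlabelled lemma about an arbitrary extension $0\to E\to Q\to F\to 0$, and proves that lemma by exactly the computation you defer to your last paragraph: represent $e$ by $\deb(s)$ for a $C^\infty$ splitting $s$ of $Q\to F$, write both pullback classes as explicit Dolbeault cocycles, and conclude from the pointwise cyclic invariance of the matrix trace. Your identifications $d\pi(a)=a\circ e$ and $d\alpha(a)=e\circ a$, the verification that both kill $W^*\otimes W$, and the reduction of both pullbacks to traces of the cyclic word in $a,e,b,e$ all match the paper's diagrams.

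The one step of your plan that would fail is the final sign resolution. You expect that compatible representatives and compatible trace maps make $\tr_F(a\circ e\circ b\circ e)$ and $\tr_S(e\circ a\circ e\circ b)$ literally the same class in $H^2(\cO_X)$; they are not. Your own Koszul bookkeeping already predicts this: passing from the $F$-based word to the $S$-based one rotates the odd-degree class $e$ past the odd-degree word $a\circ e\circ b$, giving the sign $(-1)^{1\cdot 1}=-1$; equivalently, your application of cyclicity with $f=a$, $g=e\circ b\circ e$ lands on $\tr_S(e\circ b\circ e\circ a)$, which is $\omega_S$ evaluated on the arguments in the \emph{opposite} order, hence $-\tr_S(e\circ a\circ e\circ b)$. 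The paper's Dolbeault computation confirms this: its lemma concludes $\beta^*\sigma_E=-\gamma^*\sigma_F$, and no choice of representatives can remove that minus sign, since both pullback forms are canonically defined. (The wording ``are the same'' in Proposition \ref{restrict2-forms} is itself loose on this point relative to the lemma that proves it.) Your closing remark is therefore not a hedge but the actual fix: in the proof of Theorem \ref{pfthm1} only the vanishing of the pulled-back forms is used, which is insensitive to a universal sign, so your argument, amended to ``the two pullbacks agree up to the sign $-1$,'' establishes everything the proposition is used for.
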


\begin{proof} It is enough to prove that the  pullback of the symplectic  2-forms on $H^1(F^*\otimes F)$ and $H^1(S^*\otimes S)$ to $H^0(S^*\otimes F)$ are the same.
This is a special case of the following lemma.             
\end{proof}

The following lemma holds on any projective complex manifold, except of course the forms $\sigma_E$ and $\sigma_F$ are  not in general symplectic, just two-forms valued in $H^2(\cO_X)$.
\begin{lemma}
    Let \[
    0\longrightarrow E\longrightarrow Q\xrightarrow{\,\,\,\pi\,\,\,} F\longrightarrow 0
    \]
    be an exact sequence of vector bundles on $X$, corresponding to a class $e\in H^1(F^*\otimes E)$.
    By pre and post composition, $e$ induces linear maps $\beta:H^0(E^*\otimes F)\to H^1(E^*\otimes E)$ and $\gamma:H^0(E^*\otimes F)\to H^1(F^*\otimes F)$. Let $\sigma_E$ be the skew symmetric two form on $H^1(E^*\otimes E)$ with values in $H^2(\mathcal O_X)$ induced by composition and trace, and $\sigma_F$ the analogous form on $H^1(F^*\otimes F)$. Then $\beta^*\sigma_E=-\gamma^*\sigma_F$.
\end{lemma}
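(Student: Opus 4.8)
The plan is to reinterpret everything through Yoneda composition in the derived category and then reduce the claimed identity to the graded cyclicity of the trace. First I would regard $e$ as a morphism $F\to E[1]$ and $\phi,\psi\in H^0(E^*\otimes F)=\operatorname{Hom}(E,F)$ as honest morphisms $E\to F$. With these identifications the two maps in the statement are nothing but pre- and post-composition with $e$: one checks that $\beta(\phi)=e\circ\phi\in\Ext^1(E,E)$, the composite $E\xrightarrow{\phi}F\xrightarrow{e}E[1]$, which is exactly the contraction of the $F$-factor in $e\cup\phi$; and $\gamma(\phi)=\phi[1]\circ e\in\Ext^1(F,F)$, the composite $F\xrightarrow{e}E[1]\xrightarrow{\phi[1]}F[1]$, which is the contraction of the $E$-factor. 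This is the step that translates the ``cup product followed by a contraction'' description of $\beta$ and $\gamma$ into clean Yoneda language.

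Next I would write each form as the trace of a four-fold composite. Since $\sigma_E(a,b)=\tr(a[1]\circ b)$ for $a,b\in\Ext^1(E,E)$, and likewise for $\sigma_F$, substituting the formulas above gives
\[
\sigma_E(\beta\phi,\beta\psi)=\tr\bigl(e[1]\circ\phi[1]\circ e\circ\psi\bigr),\qquad \sigma_F(\gamma\phi,\gamma\psi)=\tr\bigl(\phi[2]\circ e[1]\circ\psi[1]\circ e\bigr),
\]
both valued in $H^2(\cO_X)$. In this way the entire lemma is reduced to the single scalar identity $\tr(e[1]\circ\phi[1]\circ e\circ\psi)=-\tr(\phi[2]\circ e[1]\circ\psi[1]\circ e)$, with the crucial minus sign to be explained.

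Finally I would obtain this identity from the graded cyclicity of the trace pairing: for $f\in\Ext^i(A,B)$ and $g\in\Ext^j(B,A)$ one has $\tr_A(g[i]\circ f)=(-1)^{ij}\tr_B(f[j]\circ g)$. Applying it with $f=\phi[1]\circ e\circ\psi\in\Ext^1(E,F)$ and $g=e\in\Ext^1(F,E)$ moves the outer factor $e[1]$ from the left end to the right end, produces the sign $(-1)^{1\cdot 1}=-1$, and turns the first composite into precisely $\phi[2]\circ e[1]\circ\psi[1]\circ e$, i.e.\ the second; this yields $\beta^*\sigma_E=-\gamma^*\sigma_F$. The main obstacle I anticipate is bookkeeping the Koszul signs correctly: one must fix the sign convention in the Yoneda product on $\Ext^1$ (used identically for $E$ and $F$, so that it does not affect the final answer) and justify the graded-cyclic trace identity in this setting. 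To make the sign unambiguous I would, if needed, fall back on explicit representatives, representing $e$ by a $\deb$-closed $(0,1)$-form (or a \v Cech $1$-cocycle) valued in $\mathcal{H}om(F,E)$ and $\phi,\psi$ by global homomorphisms; then the $-1$ appears transparently as the Koszul sign incurred when the two odd-degree classes are interchanged inside the cup product before tracing, while the rest of the computation is manifestly symmetric in the two contractions.
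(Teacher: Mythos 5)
Your proof is correct, but it takes a genuinely different route from the paper's. The paper argues entirely at the level of Dolbeault representatives: it uses the extension structure to manufacture an explicit cocycle for $e$ — namely $\deb(s)$ for a $C^\infty$ splitting $s:F\to Q$, whose coefficients $a_i$ land in $F^*\otimes E$ because $\pi\circ s=\id_F$ is holomorphic — then writes $\beta(\phi),\gamma(\phi)$ as explicit $(0,1)$-cocycles $\sum(a_i\circ\phi)d\bar z_i$ and $\sum(\phi\circ a_i)d\bar z_i$, wedges them, and finishes with the pointwise cyclicity of the matrix trace, the minus sign coming from $d\bar z_i\wedge d\bar z_j=-d\bar z_j\wedge d\bar z_i$. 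You package exactly that sign-plus-cyclicity mechanism into a single cited fact, the graded symmetry $\tr(g\circ f)=(-1)^{ij}\tr(f\circ g)$ of the trace pairing for $f\in\Ext^i(A,B)$, $g\in\Ext^j(B,A)$, and your bookkeeping is right: the identification of $\beta,\gamma$ with Yoneda pre/post-composition is forced by the targets, and applying graded cyclicity to $f=\phi[1]\circ e\circ\psi\in\Ext^1(E,F)$, $g=e\in\Ext^1(F,E)$ gives precisely $\sigma_E(\beta\phi,\beta\psi)=-\sigma_F(\gamma\phi,\gamma\psi)$. What your route buys: no $C^\infty$ splitting is needed, the argument works for an arbitrary class $e\in H^1(F^*\otimes E)$ rather than one arising from an extension, and the derived-category formulation makes clear the statement has nothing to do with K3 surfaces. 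What it costs: the proof is only as self-contained as the graded cyclicity lemma, which you cite rather than prove (it is standard, e.g.\ in Huybrechts--Lehn or in treatments of the Mukai pairing); your proposed fallback — verifying it on Dolbeault or \v Cech representatives, where the $-1$ is the Koszul sign from transposing two degree-one classes and the rest is fiberwise matrix trace cyclicity — is essentially the computation the paper carries out, so in its fully self-contained form your argument and the paper's coincide at the core.
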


\begin{proof}

We want to describe $e\in H^1(F^*\otimes E)$ as a cycle in Dolbeault cohomology. We choose a $C^\infty$ splitting $s:F\to Q$ and compute $\deb (s)\in A^{0,1}(F^*\otimes Q)$. We have that $\pi\deb(s)=\deb (\pi\circ s)=0$ because $\pi$ is holomorphic and  $\pi\circ f=\id_F$  is also holomorphic.

This implies that if we write in local coordinates $\deb(s)=\sum a_id\bar z_i$, then $a_i$ are not just local sections of $F^*\otimes Q$, but local sections of $F^*\otimes E$, since $\pi(a_i)=0$ (here we identify $E$ with its image inside $Q$).

We now choose $\phi, \psi\in H^0(E^*\otimes F)$. We get from $\phi$ and $e$ induced classes $\phi'\in H^1(F^*\otimes F)$ and $\phi''\in H^1(E^*\otimes E)$. We write explicitly the corresponding Dolbeault cocycles as $\phi'=\sum (\phi\circ a_i) d\bar z_i$, and $\phi^{\prime\prime}=\sum_i(a_i\circ \phi)d\bar z_i$; we do the same for $\psi$, $\psi'$ and $\psi''$.

To compute the image of $\phi'\wedge\psi'$ in $H^2(F^*\otimes F)$ we compose the local sections and wedge the differential forms. We obtain \[
\phi'\wedge\psi'=\sum_{i,j}(\phi\circ a_i\circ \psi\circ a_j)d\bar z_i\wedge d\bar z_j.
\]

Similarly we have that in $H^2(E^*\otimes E)$
\[
\phi''\wedge\psi''=\sum_{i,j}(a_i\circ \phi\circ a_j\circ \psi)d\bar z_i\wedge d\bar z_j.
\]

This implies that 
\[
-\phi''\wedge\psi''=\psi''\wedge\phi''=\sum_{i,j}(a_i\circ \psi\circ a_j\circ \phi)d\bar z_i\wedge d\bar z_j.
\]
In particular, we have 
\[
\tr(\phi'\wedge\psi')=\sum_{i,j}\tr(\phi\circ a_i\circ \psi\circ a_j)d\bar z_i\wedge d\bar z_j\in H^2(\cO_X)
\]
and
\[
-\tr(\phi''\wedge\psi'')=\sum_{i,j}\tr(a_i\circ \psi\circ a_j\circ \phi)d\bar z_i\wedge d\bar z_j.
\]
We conclude the proof by observing that, for each $i$ and $j$, \[
\tr(\phi\circ a_i\circ \psi\circ a_j)=\tr(a_i\circ \psi\circ a_j\circ \phi)
\] 
hence $\tr(\phi'\wedge\psi')=-\tr(\phi''\wedge\psi'')$ in $A^{0,2}(\cO_X)$, therefore they define the same class in $H^2(\cO_X)$.
    \end{proof}

We are now ready to prove the main result of this section. 

\begin{theorem}
\label{pfthm1}
A smooth, irreducible,  locally closed subspace $\cL\subset \Spl(r;c_1,c_2)$ is 
isotropic (resp. Lagrangian) if and only if $$\alpha _{| \cG_\cL^0}:\cG^0_\cL \longrightarrow \Spl(r';c_1',c_2')$$ is an isotropic (resp. Lagrangian) locally closed embedding.
\end{theorem}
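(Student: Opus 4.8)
The plan is to reduce the theorem to the infinitesimal statement already proved. By definition, $\alpha_{|\cG^0_\cL}$ is an isotropic locally closed embedding precisely when, at every point $[(F,W)]$, the image of the injective differential
\[
d\alpha : T_{[(F,W)]}\cG^0_\cL \hookrightarrow T_{[S]}\Spl(r';c_1',c_2')\cong H^1(S^*\otimes S)
\]
is an isotropic subspace for the Mukai form $\sigma_S$. I would first unwind the tangent space to $\cG^0_\cL$ using the exact sequence of the fibration $\pi:\cG^0_\cL\to\cL_U$: there is an exact sequence
\[
0\to T_{[(F,W)]}\pi^{-1}(F)\to T_{[(F,W)]}\cG^0_\cL\to T_{[F]}\cL\to 0,
\]
where the fibre direction is $W^*\otimes H^1(S)$ (Proposition, part (1)) and the base direction lands inside $H^1(F^*\otimes F)=T_{[F]}\Spl(r;c_1,c_2)$. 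The whole tangent space sits inside $H^0(S^*\otimes F)/W^*\otimes W$ by part (2).

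The key point is that the two forms are compatible. By Proposition \ref{restrict2-forms}, the pullback of the Mukai form $\sigma_S$ on $H^1(S^*\otimes S)$ and the pullback of the Mukai form $\sigma_F$ on $H^1(F^*\otimes F)$ to the ambient space $T_{[(F,W)]}\cG_U\cong H^0(S^*\otimes F)/W^*\otimes W$ agree. Therefore $\alpha^*\sigma_S=\pi^*\sigma_F$ on $T\cG^0_\cL$. Restricting further to the subspace $T\cG^0_\cL$, the form $\alpha^*\sigma_S$ vanishes if and only if $\pi^*\sigma_F$ vanishes there. Now the fibre direction $W^*\otimes H^1(S)$ is automatically $\sigma_S$-isotropic by Proposition \ref{isot1}, and it is clearly $\pi^*\sigma_F$-isotropic since $\pi^*\sigma_F$ factors through $d\pi$, which kills the fibre; moreover $d\pi$ identifies the quotient $T_{[(F,W)]}\cG^0_\cL/(W^*\otimes H^1(S))$ with $T_{[F]}\cL$. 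Hence $\alpha^*\sigma_S|_{\cG^0_\cL}=0$ if and only if the form $\sigma_F$ pulled back via $d\pi$ vanishes on $T_{[F]}\cL$, i.e.\ if and only if $\cL$ is isotropic in $\Spl(r;c_1,c_2)$. This proves the isotropic equivalence.

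For the Lagrangian statement, I would invoke the dimension count of Proposition \ref{dims}: if $\cL$ is Lagrangian, then $\dim\cL=\tfrac12\dim\Spl(r;c_1,c_2)$, and Proposition \ref{dims} gives $\dim\cG^0_\cL=\tfrac12\dim\Spl(r';c_1',c_2')$. Combined with the isotropic equivalence just established, $\alpha(\cG^0_\cL)$ is then isotropic of half the dimension of the ambient symplectic space, hence Lagrangian; conversely a Lagrangian image forces $\cL$ to be isotropic of the correct dimension by the same count, hence Lagrangian.

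The main obstacle I anticipate is making the form-comparison genuinely descend to the subspace $T\cG^0_\cL$ rather than only to the ambient $T\cG_U$: one must check that the identification of $\mathrm{Coker}(\phi)=H^1(F^*\otimes F)$ with $\mathrm{Coker}(\varphi)=H^1(S^*\otimes S)/W^*\otimes H^1(S)$ coming from the commutative diagram preceding Proposition \ref{restrict2-forms} is exactly the map $d\pi$ identifies the quotient tangent spaces, and that it is compatible with the two symplectic forms. In other words, the delicate step is verifying that the natural surjection $T\cG^0_\cL \twoheadrightarrow T_{[F]}\cL$ intertwines $\alpha^*\sigma_S$ and $\sigma_F$ modulo the isotropic fibre direction; once this compatibility is in place, both halves of the theorem follow formally.
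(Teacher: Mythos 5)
Your proposal is correct and follows essentially the same route as the paper's proof: both identify $\alpha^*\sigma_S$ with $\pi^*\sigma_F$ via Proposition \ref{restrict2-forms}, use the surjectivity of $d\pi$ (coming from the Grassmann-bundle structure) to transfer the vanishing of the form between $\cG^0_\cL$ and $\cL$, and deduce the Lagrangian case from the dimension count of Proposition \ref{dims}. The ``delicate step'' you flag---that the cokernel identification behind Proposition \ref{restrict2-forms} agrees with the maps induced by $d\pi$ and $d\alpha$---is likewise taken as implicit in the paper, so your argument does not diverge from it.
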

\begin{proof} We consider the diagram
 $$
 \xymatrix{\cG^0_\cL 
  \ar[r] \ar[d]^{\pi}  &   \Spl(r';c_1',c_2')\\
\cL_U=\cL \cap U \subset U\subset \Spl(r;c_1,c_2) & & 
}$$
By Proposition \ref{restrict2-forms}  the pullback of the symplectic 2-form on $\Spl(r';c_1',c_2')$ is the same as the pullback of the 2-form on $\Spl(r;c_1,c_2)$. On the other hand, the differential of $\pi $ is surjective because it is a Grassmann bundle. Hence the pullback of the symplectic 2-form on $U$
 is zero if and only if its restriction to $\cL_U$ is zero. Therefore,
 $$\alpha _{| \cG^0_\cL}:\cG^0_\cL \longrightarrow \Spl(r';c_1',c_2')$$
is an isotropic  immersion if and only if $\cL\subset \Spl(r;c_1,c_2)$ is an isotropic subspace. 

 The statement about Lagrangians follows by the dimension computation established in Proposition \ref{dims}. 
 \end{proof}

 \begin{corollary}\label{FirstCoro}  Let $X$ be a K3 surface, $L$ an ample line bundle which is generated by global sections and $3\le w\le h^0(\cO_X(L))$. Then the moduli space $Syz^w_{[L]}$ of syzygy bundles is a non-empty, rational Lagrangian subspace of $\Spl(w-1;-c_1(L),L^2)$ which is also a smooth rational quasiprojective variety (open in $Gr(w,H^0(L))$).
 \end{corollary}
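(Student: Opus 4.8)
The plan is to obtain this Corollary as a direct application of Theorem \ref{pfthm1} in the rank-one case, taking $F=\cO_X(L)$ and letting $\cL$ be the single reduced point $[L]$.

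First I would verify that the construction applies. A line bundle is always simple, and by hypothesis $\cO_X(L)$ is globally generated; moreover, for an ample $L$ on a K3 surface Kodaira vanishing and Serre duality give $H^1(\cO_X(L))=H^1(\cO_X(-L))=0$ (as recorded in the Remark following the definition of $U$). Hence $\cO_X(L)\in U(1;c_1(L),0)$, so the generalized syzygy construction of Definition \ref{def1} is available with $r=1$, $c_1=c_1(L)$, $c_2=0$, and the hypothesis $3\le w$ is exactly the bound $w\ge 2+r$ required there and in Theorem \ref{mainthm1}. The target invariants read off from Definition \ref{def1} are $r'=w-1$, $c_1'=-c_1(L)$ and $c_2'=c_1(L)^2=L^2$, matching the statement.

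Next I would check that $\cL=\{[L]\}$ is a Lagrangian subspace of $\Spl(1;c_1(L),0)$. By Proposition \ref{tgt-obs} its tangent space is $Ext^1(\cO_X(L),\cO_X(L))=H^1(\cO_X)=0$, equivalently by Corollary \ref{spl_dim} $\dim \Spl(1;c_1(L),0)=-2+2=0$; thus this moduli space is the isolated reduced point $[L]$. A point in a zero-dimensional symplectic space is trivially isotropic and of half dimension, hence Lagrangian. With this, Theorem \ref{pfthm1} yields at once that $\alpha:\cG^0_{[L]}\to \Spl(w-1;-c_1(L),L^2)$ is a Lagrangian locally closed embedding, whose image is by definition $\Syz^w_{[L]}$.

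It then remains to describe $\cG^0_{[L]}$. Since $\cL$ is a single point, $\pi^{-1}([L])$ is the whole Grassmannian $Gr(w,H^0(\cO_X(L)))$, and $\cG^0_{[L]}$ is its open locus where $eval_W$ is surjective. By the Remark characterizing surjectivity of the generic evaluation map, a generic $W$ of dimension $w\ge r+2=3$ has surjective evaluation, so this locus is non-empty, open and dense; as an open subset of a Grassmannian it is smooth, rational and quasiprojective. These properties transfer to the isomorphic image $\Syz^w_{[L]}$ under the locally closed embedding $\alpha$, which establishes every assertion. The content here is purely bookkeeping: there is no genuine obstacle, since the substantive comparison of the Mukai symplectic forms is already carried out in Theorem \ref{pfthm1}; the only point requiring care is to line up the numerical hypotheses ($w\ge 3$, the vanishing $H^1(\cO_X(L))=0$, and simplicity) so that Theorem \ref{pfthm1} applies verbatim.
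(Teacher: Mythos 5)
Your proposal is correct and follows essentially the same route as the paper: apply Theorem \ref{mainthm1} (proved as Theorem \ref{pfthm1}) with $\cL=\{[L]\}$, noting that $\cO_X(L)$ is simple and rigid (so $\Spl(1;c_1(L),0)$ is the single point $[L]$, trivially Lagrangian) and that $H^1(\cO_X(L))=0$ by ampleness. Your additional verification of the numerical invariants and of non-emptiness, smoothness, rationality and quasiprojectivity via the open locus in $Gr(w,H^0(\cO_X(L)))$ only spells out details the paper leaves implicit.
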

 \begin{proof}
 It follows from Theorem \ref{mainthm1} taking into account that $\cO_X(L)$ is simple and rigid, hence the point $[L]$ is equal to $\Spl(1;c_1(L),0)$ and thus also Lagrangian. The condition $H^1(L)=0$ follows from $L$ being ample. 
 \end{proof}

 %%%%%%%%%%%%%%%%%%%%%%%%%%%%%%%%%%%%%%%%%%%%%%%%%%

\section{Proof of Theorem \ref{mainthm2}}

We keep the notation of the previous sections and we consider the morphism
  $$\begin{array}{crcl} \beta : &\cP_{\cU_e} & \longrightarrow & \Spl(\overline{r};c_1,c_2) \\
 & (F,V) & \mapsto & E.
 \end{array}
 $$
 By Proposition \ref{inj2}, the map $\beta $ is injective. To show that it has injective differential and whether it is an isotropic (resp. Lagrangian) locally closed embedding we need to  describe the tangent  space to $\cP_{U_e}$  as well as the relative tangent space to the morphism $\pi:\cP_{U_e}\to U_e$. 

\begin{proposition} %\label{tgts2}
With the above notation we have canonical isomorphisms:
\begin{itemize}
\item[(1)] $
T_{[(F,V^*)]}(\pi ^{-1}(F))  \cong  V\otimes H^1(E^*)$,  \text{ and }
 \item[(2)]$ T_{[(F , V^*)]}\cP_{U_e}  \cong  H^1(E\otimes F^*)/V^*\otimes V$.
\end{itemize}
\end{proposition}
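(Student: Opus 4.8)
The plan is to prove the two statements separately: (1) by using the Grassmann-bundle structure of $\pi^{-1}(F)$, and (2) by comparing the relative tangent sequence of $\pi$ with a cohomology long exact sequence coming from the extension.

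For (1), the fibre $\pi^{-1}(F)$ is the Grassmannian $Gr(v,H^1(F^*))$ of $v$-dimensional subspaces, so its tangent space at $[V^*]$ is canonically $\Hom(V^*,H^1(F^*)/V^*)\cong V\otimes\bigl(H^1(F^*)/V^*\bigr)$. It therefore suffices to identify $H^1(F^*)/V^*$ with $H^1(E^*)$, and this I would read off the cohomology of the dualized sequence (\ref{extension2}). Since $H^0(F^*)=0$ and $H^0(E^*)=0$ (both from Lemma \ref{simple2}) and $H^1(\cO_X)=0$ on a K3 surface, that sequence collapses to $0\to V^*\to H^1(F^*)\to H^1(E^*)\to 0$, in which the first map is the connecting homomorphism, i.e. cup product with the extension class, and hence is exactly the defining inclusion $V^*\hookrightarrow H^1(F^*)$. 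Thus $H^1(E^*)\cong H^1(F^*)/V^*$ and (1) follows.

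For (2), since $\pi$ is a smooth Grassmann bundle its relative tangent sequence at $[(F,V^*)]$ reads, using Proposition \ref{tgt-obs} to write $T_{[F]}U_e\cong H^1(F\otimes F^*)$,
\[
0\to V\otimes H^1(E^*)\to T_{[(F,V^*)]}\cP_{U_e}\to H^1(F\otimes F^*)\to 0.\qquad(*)
\]
I would compare this with the cohomology sequence of (\ref{extension1}) tensored by $F^*$, namely $0\to V\otimes F^*\to E\otimes F^*\to F\otimes F^*\to 0$. Here $H^0(F^*)=0$ kills $H^0(V\otimes F^*)$; $H^0(F\otimes F^*)=\CC\cdot\id_F$ since $F$ is simple; the connecting map sends $\id_F$ to the extension class, which is nonzero, hence is injective; and $H^2(F^*)\cong H^0(F)^*=0$, so the sequence terminates, yielding
\[
0\to \bigl(V\otimes H^1(F^*)\bigr)/\CC e\to H^1(E\otimes F^*)\to H^1(F\otimes F^*)\to 0.\qquad(**)
\]
To reconcile $(*)$ and $(**)$, I would note that $V^*\otimes V$ embeds in $V\otimes H^1(F^*)$ through the second factor via $V^*\subset H^1(F^*)$, and that under $V\otimes H^1(F^*)=\Hom(V^*,H^1(F^*))$ the extension class $e$ is precisely the image of the identity element of $V^*\otimes V\cong\End(V)$; in particular $\CC e\subset V^*\otimes V$. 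Since part (1) gives $V\otimes H^1(E^*)=(V\otimes H^1(F^*))/(V^*\otimes V)$, quotienting $(**)$ by the image of $V^*\otimes V$ in $H^1(E\otimes F^*)$ turns its left-hand term into $V\otimes H^1(E^*)$ while leaving $H^1(F\otimes F^*)$ unchanged, producing a short exact sequence isomorphic to $(*)$. A short five-lemma comparison then gives the canonical isomorphism $T_{[(F,V^*)]}\cP_{U_e}\cong H^1(E\otimes F^*)/(V^*\otimes V)$.

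The main obstacle I anticipate is not any dimension count but the \emph{naturality} of this comparison: one must check that the splitting datum supplied by the Grassmann-bundle structure in $(*)$ matches, under the extension construction, the inclusion arising from cup product with $e$ in $(**)$, i.e. that the first-order deformation of the pair $(F,V^*)$ is identified with a class in $H^1(E\otimes F^*)$ compatibly with both the relative tangent map and the cohomological connecting maps. As a consistency check, Riemann--Roch gives $h^1(E\otimes F^*)=vu-1+h^1(F\otimes F^*)$ with $u=h^1(F^*)$, using $H^0(E\otimes F^*)=0$ and $H^2(E\otimes F^*)\cong H^0(E^*\otimes F)^*$ of dimension $1$ from Lemma \ref{simple2}; consequently the map $V^*\otimes V\to H^1(E\otimes F^*)$ must have one-dimensional kernel, spanned by $e$, and the quotient has dimension $v(u-v)+h^1(F\otimes F^*)=\dim\cP_{U_e}$, exactly as predicted.
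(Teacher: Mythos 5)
Your proof of part (1) is essentially the paper's own argument: the Grassmannian description of the fibre together with the identification $H^1(E^*)\cong H^1(F^*)/V^*$ read off from the dualized extension, so nothing to add there.

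In part (2) there is a genuine gap, and it is exactly the point you flag as ``the main obstacle'' and then leave unresolved. A five-lemma comparison of your sequences $(*)$ and $(**)$ requires a \emph{morphism} of short exact sequences, i.e.\ a linear map $T_{[(F,V^*)]}\cP_{U_e}\to H^1(E\otimes F^*)/(V^*\otimes V)$ commuting with the identity on the two outer terms, and you never construct such a map. Since everything in sight is a $\CC$-vector space, both sequences split, so matching outer terms already forces the middle terms to have the same dimension; what your argument therefore establishes is only an \emph{abstract} isomorphism, not the \emph{canonical} one asserted by the proposition, in which a first-order deformation of the pair $(F,V^*)$ is realized as a class in $H^1(E\otimes F^*)$ modulo $V^*\otimes V$. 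This is not a pedantic distinction: the specific identification is what gets used later, e.g.\ in Proposition \ref{difBetainj}, where $d\beta$ is computed on $H^1(E\otimes F^*)/V^*\otimes V$ via the cohomology of $0\to F^*\otimes E\to E^*\otimes E\to V^*\otimes E\to 0$ under precisely this identification, and in Proposition \ref{restrict2-formsbis}, where the Mukai two-forms are pulled back to it.

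The missing ingredient is a genuinely deformation-theoretic computation, which is how the paper argues. It works on the stack $\cP_{\cU_e}$ (legitimate, since $\cP_{\cU_e}\to\cP_{U_e}$ is a $\Gm$-gerbe) and passes to the frame bundle $\widetilde\cP_{\cU_e}$, the principal $GL(v)$-bundle over $\cP_{\cU_e}$ whose points are exact sequences $0\to\cO_X^{\oplus v}\to E\to F\to 0$, equivalently their duals $0\to F^*\to E^*\to\cO_X^{\oplus v}\to 0$. Deforming such a point means deforming the pair $(E^*,\,E^*\to\cO_X^{\oplus v})$ with the target held fixed, and the tangent space to such deformations is the hypercohomology $\bH^1$ of the two-term complex $[E\otimes E^*\to E\otimes\cO_X^{\oplus v}]$. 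Since that morphism of bundles is surjective with kernel $E\otimes F^*$, the complex is quasi-isomorphic to $E\otimes F^*$ and this deformation space is canonically $H^1(E\otimes F^*)$; quotienting by the vertical directions $V^*\otimes V$ of the $GL(v)$-action then yields (2). Once this is in place, your sequences $(*)$ and $(**)$ are automatically compatible with the identification --- but they are a consequence of the construction, not a substitute for it.
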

\begin{proof} 

(1) Being $\pi^{-1}(F)=Gr(v,H^1(X,F^*))$ we get
 $$\begin{array}{rcl}T_{[(F,V^*)]}(\pi ^{-1}(F)) & \cong & Hom(V^*,H^1(X,\cF^*)/V^*) \\
 & \cong & V\otimes H^1(X,F^*)/V^*\\
& \cong & V\otimes H^1(X,E^*) \\
 & \cong & H^1(X,E^*\otimes V).
\end{array}
$$

(2)  We will prove the statement for $ T_{[(F , V^*)]}\cP_{\cU_e} $ instead, since $\cP_{\cU_e}\to \cP_{U_e}$ is a $\Gm$-gerbe. Let $\widetilde\cP_{\cU_e}$ be the principal $GL(v)$ bundle of reference frames (bases) in $\cU_e$ over $\cP_{\cU_e}$.
A point in $\widetilde\cP_{\cU_e}$ is an exact sequence \[
0\to \cO_X^{\oplus v}\to E\to F\to 0
\]
or equivalently its dual
 \[
0\to F^* \to E^*\to \cO_X^{\oplus v}\to 0.
\]

Its deformations are induced by deforming the bundle $E^*$ and the map $E^*\to \cO_X^{\oplus v}$ while keeping the target $\cO_X^{\oplus v}$ fixed.

If $f:A\to B$ is a morphism of vector bundles on a scheme $Y$, the tangent space to deforming the pair $(A,f)$ while keeping $B$ fixed is $\bH^1(Y, T_f)$, where $T_f:=[A^*\otimes A\to A^*\otimes B]\in D^{-1,0}(Y).$

Applying this to our case, we get that the tangent space is $H^1(E\otimes F^*)$. Quotienting by $V\otimes V^*$ (the relative tangent space of $\widetilde \cP_{\cU_e}\to \cP_{\cU_e}$ at the point $(F,V^*)$) gives us the tangent space to $\cP_{\cU_e}$.
\end{proof}

\vskip 2mm

\begin{proposition}\label{difBetainj} The differential map
$$d\beta : T_{[(F , V^*)]}\cP_{U_e}  \cong  H^1(E\otimes F^*)/V^*\otimes V \to T_{[E]}(Slp(r';c_1',c_2'))  \cong  H^1(E \otimes E^*)
$$
is injective.
\end{proposition}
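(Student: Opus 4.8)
The plan is to identify $d\beta$ with the cohomology map induced by a natural sheaf inclusion, and then to compute its kernel by a long exact sequence, matching that kernel with the subspace $V^*\otimes V$ by which we quotient.

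First I would pass to the frame bundle $\widetilde{\cP}_{\cU_e}$ introduced in the previous proposition, where the tangent space to the total space is $\bH^1(T_f)\cong H^1(E\otimes F^*)$ and $T_{[(F,V^*)]}\cP_{U_e}$ is the quotient of this by the image of the relative tangent space $V^*\otimes V$. I claim that the differential of the induced map $\widetilde\beta\colon \widetilde{\cP}_{\cU_e}\to\Spl(\bar r;c_1,c_2)$, that is $d\beta$ before passing to the quotient, is the map $H^1(E\otimes F^*)\to H^1(E\otimes E^*)$ induced by the sheaf inclusion $E\otimes F^*\hookrightarrow E\otimes E^*$ coming from $F^*\hookrightarrow E^*$. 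To justify this I would use that the two-term complex $T_f=[E\otimes E^*\to V^*\otimes E]$ (post-composition with the surjection $f\colon E^*\twoheadrightarrow V^*\otimes\cO_X$) is quasi-isomorphic to $E\otimes F^*$ placed in degree $0$, since $f$ is surjective with kernel $F^*$ and $E$ is locally free; under this quasi-isomorphism the forgetful chain map $T_f\to(E\otimes E^*)[0]$, which computes $d\widetilde\beta$ because $\widetilde\beta$ records the deformation of $E$ while discarding the deformation of the framing, becomes precisely the inclusion $E\otimes F^*\hookrightarrow E\otimes E^*$.

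Next I would dualize the defining sequence $0\to V\otimes\cO_X\to E\to F\to 0$ to get $0\to F^*\to E^*\to V^*\otimes\cO_X\to 0$, tensor with the locally free $E$ to obtain the short exact sequence $0\to E\otimes F^*\to E\otimes E^*\to V^*\otimes E\to 0$, and take its long exact cohomology sequence. By the identification above, $\ker d\widetilde\beta$ is the image of the connecting map $\delta\colon H^0(V^*\otimes E)\to H^1(E\otimes F^*)$, equivalently the cokernel of the induced map $H^0(E\otimes E^*)\to H^0(V^*\otimes E)$. Here I would feed in three facts: $E$ is simple, so $H^0(E\otimes E^*)=\CC\cdot\id_E$ by Lemma \ref{simple2}; $H^0(E)\cong V$, read off from the cohomology sequence of the defining extension together with $H^0(F)=0$, whence $H^0(V^*\otimes E)\cong V^*\otimes V$; and $H^0(E\otimes F^*)=\operatorname{Hom}(F,E)=0$, since the only candidate $\id_F$ fails to lift to $E$ because its image under $\operatorname{Hom}(F,F)\to\Ext^1(F,V\otimes\cO_X)$ is the nonzero extension class (using $H^0(F^*)=0$). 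Consequently $H^0(E\otimes E^*)\to H^0(V^*\otimes E)$ is injective and $\ker d\widetilde\beta=\operatorname{im}\delta$.

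Finally I would match $\operatorname{im}\delta$ with the quotiented subspace. The identification $H^0(E)\cong V$ turns $H^0(V^*\otimes E)\cong V^*\otimes V$ into the space of infinitesimal framings, and $\delta$ is exactly the infinitesimal change-of-frame map whose image is the subspace $V^*\otimes V$ defining $T_{[(F,V^*)]}\cP_{U_e}$ as a quotient of $H^1(E\otimes F^*)$. Hence $\ker d\widetilde\beta=\operatorname{im}\delta$ coincides with that subspace, and the induced map $d\beta$ on the quotient is injective. I expect the main obstacle to be precisely this last bookkeeping step: checking that the connecting map $\delta$ really agrees with the infinitesimal frame-change map, in particular that the scalar automorphisms of the simple bundle $E$ account for the one-dimensional discrepancy between $\dim V^*\otimes V$ and $\dim\operatorname{im}\delta$, so that the quotient is taken by exactly $\ker d\widetilde\beta$. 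The cohomology computation itself is routine once the three vanishing and identification inputs are in place.
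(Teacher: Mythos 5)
Your proposal is correct and takes essentially the same route as the paper: the paper's proof is exactly the long exact cohomology sequence of $0\to E\otimes F^*\to E\otimes E^*\to V^*\otimes E\to 0$ combined with the identification $V\cong H^0(E)$, so that the kernel of $H^1(E\otimes F^*)\to H^1(E\otimes E^*)$ is the image of the connecting map from $H^0(V^*\otimes E)\cong V^*\otimes V$, i.e.\ precisely the subspace quotiented out. Your extra verifications (identifying $d\beta$ via the quasi-isomorphism of the two-term complex with $E\otimes F^*$, the vanishing $H^0(E\otimes F^*)=0$, and matching the connecting map with the infinitesimal frame-change subspace) are sound and simply make explicit what the paper, relying on its preceding tangent-space computation, treats as immediate.
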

\begin{proof} Since $V\cong H^0(E)$ the result immediately follows from the exact cohomology sequence associated to the exact sequence:
$$
0 \to F^*\otimes E \to E^*\otimes E \to V^*\otimes E \to 0.
$$
\end{proof}

\begin{lemma} \label{betaemb} The morphism $\beta:\cP_{U_e}\to\Spl(\overline{r};c_1,c_2)$ is a locally closed embedding.
\end{lemma}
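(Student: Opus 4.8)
The plan is to upgrade $\beta$ from an injective immersion to a locally closed embedding by constructing, in families, a (two-sided) inverse defined on its image. By Proposition \ref{inj2} and Proposition \ref{difBetainj} we already know that $\beta$ is injective and has injective differential everywhere; what is missing is that the image $\ext^v_{U_e}\subset\Spl(\overline{r};c_1,c_2)$ is locally closed and that $\beta$ restricts to an isomorphism onto it. As observed in Definition \ref{def_beta}, it suffices to prove the corresponding statement for the morphism of stacks $\overline{\beta}:\cP_{\cU_e}\to\SSpl(\overline{r};c_1,c_2)$, where a universal family genuinely exists; the $\Gm$-gerbe structure of the cartesian square then transfers the conclusion back to $\beta$.

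First I would make functorial the reconstruction already used set-theoretically in the proof of Proposition \ref{inj2}. Given $E$ in the image, one recovers $V=H^0(E)$ (using $H^0(F)=0$), the subbundle $V\otimes\cO_X\hookrightarrow E$ as the image of the evaluation map, the rank-$r$ quotient $F$ as its cokernel, and finally the inclusion $V^*\hookrightarrow H^1(F^*)$ as the extension class of $0\to V\otimes\cO_X\to E\to F\to 0$. I would then characterise the image as the locus of those $E$ for which $h^0(E)=v$, the evaluation map $H^0(E)\otimes\cO_X\to E$ is fibrewise injective (equivalently, has locally free cokernel), and the resulting cokernel $F$ lies in $U_e(r;c_1,c_2)$. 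By semicontinuity the condition $h^0(E)=v$ cuts out a locally closed locus; on it the fibrewise injectivity of the evaluation map is an open condition (its degeneracy locus is closed), and since $U_e$ is open by definition the final condition is again open. Hence $\ext^v_{U_e}$ is locally closed.

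Next I would promote the reconstruction to a morphism $\gamma:\ext^v_{U_e}\to\cP_{U_e}$. Working with the universal bundle $\cE$ on $X\times B$ for a base $B$ mapping to the image locus, I set $\cV:=\overline{p}_*\cE$, which is locally free of rank $v$ by cohomology and base change; I form the relative evaluation $\overline{p}^*\cV\to\cE$ and let $\cF$ be its cokernel. The fibrewise injectivity condition guarantees that $\cF$ is $B$-flat with fibres the bundles $F_b\in U_e$, so $\cF$ induces a morphism $B\to U_e$, and $R^1\overline{p}_*\cF^*$ together with the tautological inclusion $\cV^*\hookrightarrow R^1\overline{p}_*\cF^*$ promotes this to a morphism $B\to\cP_{U_e}$. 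By the universal property of $\cP_{U_e}$ this yields $\gamma$, and the set-theoretic identities $\gamma\circ\beta=\id$ and $\beta\circ\gamma=\id$ coming from Proposition \ref{inj2} hold as morphisms by universality. Thus $\beta$ is an isomorphism onto the locally closed subspace $\ext^v_{U_e}$, i.e.\ a locally closed embedding.

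The step I expect to be the main obstacle is the flatness and local-freeness bookkeeping in this family reconstruction: precisely, verifying that the locus where $H^0(E)\otimes\cO_X\to E$ has locally free cokernel $F$ with $F\in U_e$ is genuinely locally closed, and that the cokernel $\cF$ is flat over $B$ so that $\gamma$ is a morphism and not merely a map on points. This is exactly where cohomology and base change must be combined with the openness of $U_e$, in direct analogy with the proof that $\alpha$ is a locally closed embedding in \cite[Theorem 1.1]{FM}.
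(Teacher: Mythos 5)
Your proposal is correct and takes essentially the same route as the paper's own proof: both characterize the image in $\Spl(\overline{r};c_1,c_2)$ by the conditions $h^0(E)=v$, evaluation map injective with vector-bundle cokernel, and cokernel lying in $U_e$ (deducing local closedness step by step), and then construct the inverse morphism in families from $\bar p_*\cE$, its relative evaluation, and the resulting subbundle of $R^1\bar p_*\cF^*$. The only real difference is technical bookkeeping: where you invoke semicontinuity plus cohomology-and-base-change on reduced bases, the paper imposes $H^0(E^*)=H^2(E)=0$ first and gives the locus $h^0(E)=v$ its structure via the $v$-th Fitting ideal of $R^2q_*(\cE^*\otimes\omega_X)$, which yields the factorization of $\bar\beta$ without any reducedness hypothesis.
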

\begin{proof} 
In analogy with \cite{FM} Section 4, we will prove (equivalently) that the morphism $\bar\beta:\cP_{\cU_e}\to \SSpl(\overline{r};c_1,c_2)$ is a locally closed embedding by
first constructing a locally closed substack $\EExt^v_{\cU_e}$ of $\SSpl(\overline{r};c_1,c_2)$ and then proving that the morphism $\bar\beta:\cP_{\cU_e}\to \SSpl(\overline{r};c_1,c_2)$ induces an isomorphism $\cP_{\cU_e}\to \EExt^v_{\cU_e}$.

We define an open substack $\cZ_0$ in $\SSpl(\overline{r};c_1,c_2)$ by requiring its points $[E]$ are bundles such that 
    $H^0(E^*)=H^2(E)=0$, which is open by cohomology and base change. We define a locally closed substack $\cZ_1$ of $\cZ_0$, whose points are the vector bundles $E$ satisfying $h^0(E)=v$, by the $v$-th Fitting ideal of the sheaf $R^2q_*(\cE^*\otimes \omega_X)$ where $q:X\times \cZ_0\to \cZ_0$ is the projection and $\cE$ is the universal bundle on $X\times \cZ_0$, in analogy with \cite[Section 4.2]{FM}; by the properties of the Fitting ideal, the morphism $\bar\beta$ factors via $\cZ_1$.

     We define an open substack $\cZ_2$ of $\cZ_1$ whose points are the vector bundles $E$ such that the evaluation map $H^0(E)\otimes\cO_X\to E$ is injective and has as cokernel a simple vector bundle. The cokernel of the evaluation morphism $q^*q_*\cE \to \cE$ on $X\times \cZ_2$ is a family of simple vector bundles with rank $r$ and Chern classes $c_1$, $c_2$. Therefore, it defines a morphism $\theta:\cZ_2\to \SSpl(r;c_1,c_2)$.
     
     We define $\EExt^v_{\cU_e}$ to be $\theta^{-1}(\cU_e)$, which is open in $\cZ_2$ as $\cU_e$ is open in $\SSpl(r;c_1,c_2)$. By construction, $\bar\beta$ factors via $\EExt^v_{\cU_e}$.
     We have an exact sequence on $X\times \EExt^v_{\cU_e}$
     \[
     0\to q^*q_*\cE \to \cE \to (\id_X\times\theta)^*\cF_{\cU_e}\to 0
     \]
     which realizes $(q_*\cE)^*$ as a rank $v$ subbundle of $\theta^*(R^1\bar p_*\cF_{\cU_e})$. 
      This defines a morphism $\EExt^v_{\cU_e}\to \cP_{\cU_e}$. As in \cite{FM}, proof of Theorem 1.1, this morphism and $\bar\beta$ are inverse to each other.
\end{proof}

\begin{proposition}\label{isot2}  $V\otimes H^1(E^*)\to H^1(E\otimes E^*)$ is an isotropic vector space.
\end{proposition}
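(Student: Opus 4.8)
The plan is to mirror, essentially verbatim, the proof of Proposition \ref{isot1}, with the syzygy sequence replaced by the extension sequence (\ref{extension1}). The whole point will be to reduce the vanishing of the Mukai form on the subspace $V\otimes H^1(E^*)$ to the single fact that $H^1(\cO_X)=0$ for a K3 surface.

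First I would identify the first tensor factor with sections of $E$ itself. Taking the long exact cohomology sequence of (\ref{extension1}), namely $0\to V\otimes \cO_X\to E\to F\to 0$, and using the defining hypothesis $H^0(F)=0$ for $F\in U_e$, one gets a natural isomorphism $V\cong H^0(E)$ — this is precisely the identification already exploited in the proof of Proposition \ref{inj2}. Hence $V\otimes H^1(E^*)\cong H^0(E)\otimes H^1(E^*)$, and under this identification the map $V\otimes H^1(E^*)\to H^1(E\otimes E^*)$ in question is the cup/composition product $H^0(E)\otimes H^1(E^*)\to H^1(E\otimes E^*)$, viewing $E\otimes E^*\cong \End(E)$.

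As in Proposition \ref{isot1}, it then suffices to check that the Mukai form vanishes on simple tensors. I would take $s_1,s_2\in H^0(E)$, thought of as maps $s_i\colon \cO_X\to E$, and $\xi_1,\xi_2\in H^1(E^*)$, thought of as degree-one classes $\xi_i\colon E\to \cO_X$. The simple tensor $s_i\otimes \xi_i$ maps to the rank-one class $s_i\circ \xi_i\in H^1(\End(E))=H^1(E\otimes E^*)$. The symplectic form is the Yoneda (composition) product followed by the trace $\tr\colon H^2(\End(E))\to H^2(\cO_X)=\CC$, and composing the two classes gives $(s_1\circ \xi_1)\circ(s_2\circ \xi_2)=s_1\circ(\xi_1\circ s_2)\circ \xi_2$. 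The inner factor $\xi_1\circ s_2$ is the class $\cO_X\xrightarrow{\,s_2\,}E\xrightarrow{\,\xi_1\,}\cO_X$ lying in $H^1(\cO_X)$, which is zero since $X$ is a K3 surface. Thus the composition already vanishes in $H^2(\End(E))$ before applying the trace, so the form is zero on every simple tensor and hence on all of $V\otimes H^1(E^*)$, proving isotropy.

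I do not expect a genuine obstacle here, since the argument is formally identical to the syzygy case. The only step I would treat with care is verifying that the inclusion $V\otimes H^1(E^*)\hookrightarrow H^1(E\otimes E^*)$ coming from the relative tangent-space description really coincides with the composition product under $V\cong H^0(E)$; once that matching is pinned down, the vanishing $H^1(\cO_X)=0$ does all the work, exactly as in Proposition \ref{isot1}.
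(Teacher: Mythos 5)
Your proof is correct and is exactly the argument the paper intends: the paper's own proof of Proposition \ref{isot2} simply says ``analogous to Proposition \ref{isot1}'', and what you wrote is precisely that analogue, with $V\cong H^0(E)$ (via $H^0(F)=0$) playing the role of $W^*\cong H^0(S^*)$ and the composition $\xi_1\circ s_2$ landing in $H^1(\cO_X)=0$ doing all the work. No gaps.
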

\begin{proof} It is analogous to the proof of Proposition \ref{isot1}.
\end{proof}

Fix $(F,V^*)\in \cP_{U_e}$. Our next goal is to construct in an explicit, natural way the linear maps in the exact sequence \[
0\to T_{[(F,V^*)]}(\pi ^{-1}(F))\to  T_{[(F , V^*)]}\cP_{U_e}\to T_{[F]}U_e\to 0 
\]
induced by the smooth morphism $p:\cP_{U_e}\to U_e$. 

Recall that we have identifications $T_{[(F,V^*)]}(\pi ^{-1}(F))  \cong  V\otimes H^1(E^*)$,  $T_{[(F , V^*)]}\cP_{U_e}  \cong  H^1(E\otimes F^*)/V^*\otimes V$ and $T_{[F]}U_e  \cong  H^1(F \otimes F^*)$.

To this end, we consider the exact sequences:
$$
0 \to V^*\otimes V\otimes \cO_X \to V^*\otimes E \to V^*\otimes F \to 0$$
and
$$
0\to F^*\otimes F \to E^*\otimes F  \to V^*\otimes F \to 0;
$$
and we define $G^*$ to be the fiber product of $V^*\otimes E$ and $E^*\otimes F$ over $V^*\otimes F$. Therefore, we have the commutative diagram with exact rows and columns:
$$
 \xymatrix{ &  &0  \ar[d] & 0 \ar[d] \\
   &  &F^*\otimes F  \ar@{=}[r]\ar[d] & F\otimes F^* \ar[d] \\
0 \ar[r] &  V^*\otimes V\otimes \cO_X  \ar[r] \ar@{=}[d] &  G^*  \ar[r] \ar[d] & E^*\otimes F  \ar[r] \ar[d] & 0 \\
0 \ar[r] & V^*\otimes V\otimes \cO_X   \ar[r]  & V^*\otimes E  \ar[r] \ar[d] & V^*\otimes F  \ar[r] \ar[d] & 0 \\
&  & 0 & 0
}$$
Dualizing the second column and the second row we have two short exact sequences:
$$
0\to E^*\otimes V \to G \to F\otimes F^*\to 0 
$$
and
$$ 0\to E\otimes F^*\to G\to V^*\otimes V\otimes \cO_X\to 0.
$$
Hence, natural maps  $$ V\otimes H^1(E^*) \to H^1(G)= H^1(E\otimes F^*)/V^*\otimes V$$ and 
$$H^1(E\otimes F^*)/V^*\otimes V =H^1(G)\to   H^1(F \otimes F^*)$$ as we wanted. 
\vskip 4mm 
Note that again we have a commutative diagram:

$$\xymatrix{H^1(F^*\otimes E)\ar[rr] \ar[dd] \ar[rd] & &    H^1(F^*\otimes F)\\
&  H^1(F^*\otimes E)/V\otimes V^* \ar[ru] \ar@{^(->}[ld] & \\
H^1(E^*\otimes E)  &  & &}$$

\begin{proposition} \label{restrict2-formsbis}  With the above notation we have that the pullback of the closed non-degenerate holomorphic 2-forms on $H^1(F^*\otimes F)$ and $H^1(E^*\otimes E)$   to $T_{[(F , V)]}\cP_{U_e} \cong H^1(F^*\otimes E)/V^*\otimes V$ are the same.\end{proposition}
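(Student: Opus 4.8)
The plan is to reduce Proposition \ref{restrict2-formsbis} to the same \emph{algebraic} lemma that powered the syzygy case, rather than reproving everything from scratch. Recall that in the syzygy setting (Proposition \ref{restrict2-forms}) the comparison of the two Mukai two-forms was deduced from the general Lemma about an exact sequence $0\to E\to Q\to F\to 0$ of vector bundles, which showed that the form induced on $H^0(E^*\otimes F)$ by $\sigma_E$ (via precomposition with the extension class) agrees up to sign with the one induced by $\sigma_F$ (via postcomposition). Here the relevant extension is the universal one, $0\to V\otimes\cO_X\to E\to F\to 0$, and I want to apply the same mechanism to the two maps out of $H^1(F^*\otimes E)$ displayed in the commutative triangle just above the statement.

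First I would make precise which extension sequence I feed into the general Lemma. The natural candidates are the two short exact sequences produced by the $G$-construction, namely $0\to E^*\otimes V\to G\to F\otimes F^*\to 0$ and $0\to E\otimes F^*\to G\to V^*\otimes V\otimes\cO_X\to 0$; the sheaf $G$ (the dual of the fiber product) is exactly the bookkeeping device that realizes both the pushforward map $H^1(F^*\otimes E)\to H^1(F^*\otimes F)$ and the inclusion $H^1(F^*\otimes E)\to H^1(E^*\otimes E)$ as connecting/composition maps for a single extension class. So the first step is to identify the extension class $e\in H^1(\cdot)$ governing $G$ with (a twist of) the class of the universal extension, and to check that precomposition and postcomposition with $e$ on $H^1(F^*\otimes E)$ reproduce precisely the two diagonal arrows in the triangle. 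Once that identification is in place, the equality of pullbacks of $\sigma_E$ and $\sigma_F$ to $H^1(F^*\otimes E)/V^*\otimes V$ should fall out of the Lemma word for word, exactly as Proposition \ref{restrict2-forms} did.

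The main obstacle I anticipate is that the general Lemma as stated is tailored to forms on $H^0(E^*\otimes F)$ built from a class $e\in H^1(F^*\otimes E)$, i.e.\ the syzygy degrees, whereas here everything lives one cohomological degree higher: the tangent space is $H^1(E\otimes F^*)/V^*\otimes V$, the comparison target is $H^1(E^*\otimes E)$, and the relevant trace lands in $H^2(\cO_X)$ via a cup of two $H^1$ classes rather than an $H^0$ paired with an $H^1$. I would therefore either invoke a degree-shifted analogue of the Lemma or, more cleanly, observe that the Dolbeault cocycle computation in the Lemma's proof is insensitive to which cohomological degrees the input forms carry: the key identity $\tr(\phi\circ a_i\circ\psi\circ a_j)=\tr(a_i\circ\psi\circ a_j\circ\phi)$ is purely the cyclic invariance of the trace together with the skew-commutativity of the $d\bar z_i\wedge d\bar z_j$, and both survive verbatim when $\phi,\psi$ represent $H^1$ classes. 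The honest work is checking that the $\bar\partial$-splitting of the universal sequence gives cocycle representatives compatible with this cyclic cancellation once the extra Grassmann-bundle factor $V^*\otimes V$ is quotiented out.

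Concretely, then, my steps are: (i) record the two short exact sequences for $G$ and identify the governing extension class with the universal one; (ii) match the two maps $H^1(F^*\otimes E)\to H^1(F^*\otimes F)$ and $H^1(F^*\otimes E)\to H^1(E^*\otimes E)$ with pre/postcomposition by that class, so that the triangle commutes on the nose; (iii) apply the degree-shifted version of the trace-cancellation Lemma to conclude that the two induced two-forms on $H^1(F^*\otimes E)$ differ only by the overall sign $\beta^*\sigma_E=-\gamma^*\sigma_F$ inherited from the sign in the Lemma, which on the isotropic quotient $H^1(F^*\otimes E)/V^*\otimes V$ forces the two pullbacks to \emph{agree}; and (iv) note that the ambiguity $V^*\otimes V$ is killed because, by Proposition \ref{isot2}, the relative directions $V\otimes H^1(E^*)$ are isotropic, so the induced forms descend to the quotient unambiguously. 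I expect steps (i)–(ii) to be routine diagram chasing using the $G$-construction already set up, and step (iii) to be where the only genuine care is needed, in transporting the Dolbeault argument up by one degree.
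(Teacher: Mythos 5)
Your plan rests on a misidentification of the two comparison maps. In the extension setting, the diagonal arrows in the triangle above the statement, $H^1(F^*\otimes E)\to H^1(F^*\otimes F)$ and $H^1(F^*\otimes E)\to H^1(E^*\otimes E)$, are induced by the degree-zero sheaf morphisms $\id_{F^*}\otimes\varphi$ and $\varphi^*\otimes\id_E$, where $\varphi:E\to F$ is the (holomorphic) projection of the universal extension; equivalently, they are post/pre-composition with the section $\varphi\in H^0(E^*\otimes F)$. They cannot be pre/post-composition with the extension class $e\in H^1(V\otimes F^*)$ as your step (ii) requires: cupping with an $H^1$ class raises cohomological degree by one, while these maps preserve degree. (And applying the syzygy-case Lemma verbatim to $0\to V\otimes\cO_X\to E\to F\to 0$ would compare forms on $V^*\otimes H^0(F)$, which vanishes here since $H^0(F)=0$, so it yields nothing.) Because the comparison maps come from a holomorphic morphism, no $\bar\partial$-splitting and no Dolbeault cocycle computation is needed at all: the paper's actual key lemma here (Lemma \ref{eppur_commuta}) is the purely algebraic statement that the two sheaf-level pairings $(E\otimes F^*)\otimes(E\otimes F^*)\to\cO_X$, obtained through $F\otimes F^*$ and through $E\otimes E^*$, coincide; its proof is cyclic invariance of the trace on stalks, and compatibility of cup product with degree-zero sheaf maps then gives equality of the two pulled-back $2$-forms on the nose, with no sign.

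The sign handling in your step (iii) is a genuine error, not a cosmetic one. If the two pullbacks to $H^1(F^*\otimes E)$ differed by an overall sign, passing to the quotient by $V^*\otimes V$ could not "force them to agree": two forms that are negatives of each other agree only where both vanish, and these pullbacks are not zero in general --- if they were, every $\beta_{|\cP_\cL}$ would be isotropic, contradicting the "only if" direction of Theorem \ref{mainthm2}. The minus sign in the syzygy-case Lemma is an artifact of cupping with a degree-one class; in the present degree-zero situation it simply does not appear. Finally, your step (iv) is misplaced: the forms descend to $H^1(F^*\otimes E)/V^*\otimes V$ not because of Proposition \ref{isot2}, but because the two comparison maps themselves factor through that quotient --- this is exactly what the $G$-construction establishes --- so it suffices, as the paper observes, to compare the pullbacks on $H^1(F^*\otimes E)$ itself.
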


\begin{proof} 
 It is enough to prove that the  pullback of the symplectic  2-forms on $H^1(F^*\otimes F)$ and $H^1(E^*\otimes E)$ to $H^1(F^*\otimes E)$ are the same.
This is a special case of the following lemma. 
 \end{proof}

\begin{lemma}\label{eppur_commuta}
    Let $Y$ be a ringed space, $E$ and $F$ sheaves of $\mathcal O_Y$-modules which are locally free of finite rank. Fix a morphism $\varphi:E\to F$, i.e. $\phi\in E^*\otimes F$; it induces by composition natural morphisms $E\otimes F^*\to F\otimes F^*$ and $E\otimes F^*\to E\otimes E^*$.
    Then the diagram $$\xymatrix{
    (E\otimes F^*)\otimes (E\otimes F^*)\ar[r] \ar[d] &
    (F\otimes F^*)\otimes (F\otimes F^*)\ar[r]^{\ \ \ \ \ \ \ comp}& \
    F\otimes F^* \ar[d]^{tr}\\  
    (E\otimes E^*)\otimes (E\otimes E^*)\ar[r]_{\ \ \ \ \ \ comp} & 
    E\otimes E^* \ar[r]_{tr} & \mathcal O_Y
}$$
commutes.
\end{lemma}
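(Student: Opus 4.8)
The plan is to make the two ``induced by composition'' arrows completely explicit and then reduce the entire statement to the cyclic invariance of the trace. Since $E$ and $F$ are locally free of finite rank, I first identify $E\otimes F^*\cong \mathcal{H}om(F,E)$, $F\otimes F^*\cong \mathcal{E}nd(F)$ and $E\otimes E^*\cong \mathcal{E}nd(E)$. Under these identifications the top horizontal arrow $E\otimes F^*\to F\otimes F^*$, which is $\varphi\otimes\id_{F^*}$, becomes post-composition with $\varphi$, i.e.\ $a\mapsto \varphi\circ a$, while the left vertical arrow $E\otimes F^*\to E\otimes E^*$, which is $\id_E\otimes\varphi^*$, becomes pre-composition with $\varphi$, i.e.\ $a\mapsto a\circ\varphi$. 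In both cases the arrow is applied to each of the two tensor factors, and the ``comp'' arrows are just composition of (local) homomorphisms.

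Next I would chase a pair $(a,b)$ of local sections of $E\otimes F^*\cong\mathcal{H}om(F,E)$ around the two paths. Along the top path one gets $\varphi a,\varphi b\in\mathcal{E}nd(F)$, then the composite $\varphi a\circ\varphi b$, and finally $\tr(\varphi a\varphi b)$. Along the bottom path one gets $a\varphi,b\varphi\in\mathcal{E}nd(E)$, the composite $a\varphi\circ b\varphi$, and finally $\tr(a\varphi b\varphi)$. Thus the commutativity of the diagram is precisely the identity $\tr(\varphi a\varphi b)=\tr(a\varphi b\varphi)$ of sections of $\mathcal O_Y$.

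To establish this identity I would invoke the general fact that for maps $g\colon V\to W$ and $h\colon W\to V$ of finite locally free $\mathcal O_Y$-modules one has $\tr(g\circ h)=\tr(h\circ g)$, and apply it with $g=\varphi\colon E\to F$ and $h=a\varphi b\colon F\to E$, giving $\tr\!\big(\varphi\circ(a\varphi b)\big)=\tr\!\big((a\varphi b)\circ\varphi\big)$, exactly what is required. Since the whole statement is $\mathcal O_Y$-linear and local in nature, it suffices to verify it on an open cover trivializing $E$ and $F$, where it reduces to the matrix identity $\tr(\Phi A\Phi B)=\tr(A\Phi B\Phi)$ with $\Phi$ an $n\times m$ matrix and $A,B$ of size $m\times n$, which is the ordinary cyclicity of the trace.

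The only real care needed is the bookkeeping: tracking which tensor factor is acted upon and which order ``comp'' uses, so as to confirm that both paths land on cyclic permutations of the single word $\varphi a\varphi b$. I do not expect any sign to intervene here---in contrast with the earlier Dolbeault computation, where the antisymmetry of $d\bar z_i\wedge d\bar z_j$ forces the sign $\beta^*\sigma_E=-\gamma^*\sigma_F$---because at this purely sheaf-theoretic level no wedge of one-forms enters and the trace identity is an honest equality.
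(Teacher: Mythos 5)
Your proof is correct and takes essentially the same route as the paper's: both arguments localize (you on a trivializing cover, the paper on stalks), identify the induced arrows as post-composition and pre-composition with $\varphi$, and reduce commutativity of the diagram to the cyclic trace identity $\tr(MC)=\tr(CM)$ for rectangular matrices. Your explicit chase showing both paths compute traces of cyclic permutations of the word $\varphi a\varphi b$ is exactly the content of the paper's final display, so there is nothing to add.
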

\begin{proof} To check that the diagram commutes, it is enough to do so on the stalk at every point. So we can assume that $Y$ is one point with ring $A$ and $E$  and $F$ are free $A$-modules of finite rank $m$ and $n$ respectively.\\
After choosing a basis for each, we can identify $\phi$ with an $(n\times m)$ matrix $B$ with coefficients in $A$. 
Since a morphism  $(E\otimes F^*)\otimes (E\otimes F^*)\to A$ is the same as an $A$-bilinear map  $(E\otimes F^*)\times (E\otimes F^*)\to A,$ it is enough to check the statement for two maps $F\to E$, given by $(m\times n)$ matrices $M,N$ with coefficients in $A$.

We must prove that \[
\tr (M\circ B)\circ (N\circ B)= \tr (B\circ M)\circ (B\circ N) 
\]
which follows by associativity of composition and the fact that $\tr(MC)=\tr(CM)$ whenever $C$ is an $(n\times m)$ matrix.
\end{proof}

Putting all together and arguing as in the proof of Theorem:

\begin{theorem}\label{pfthm2}
The smooth, irreducible, locally closed subspace $\cL\subset \Spl(r;c_1,c_2)$ is 
 isotropic (resp.~Lagrangian) if and only if $$\beta _{| \cP_{\cL}}:\cP_\cL\longrightarrow \Spl(\overline{r};c_1,c_2)$$ is an isotropic (resp.~Lagrangian) locally closed embedding. 
\end{theorem}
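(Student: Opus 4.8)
The plan is to reduce Theorem~\ref{pfthm2} to the already-established analytic input exactly as in the proof of Theorem~\ref{pfthm1}, now using the extension-bundle machinery developed in this section. First I would set up the commutative diagram coming from the smooth morphism $\pi:\cP_\cL\to U_e$:
\[
\xymatrix{\cP_\cL \ar[r]^{\beta} \ar[d]^{\pi} & \Spl(\overline{r};c_1,c_2)\\
\cL_{U_e}=\cL\cap U_e \subset U_e \subset \Spl(r;c_1,c_2). & }
\]
The key point is that by Proposition~\ref{restrict2-formsbis} the pullback under $\beta$ of the Mukai symplectic form on $\Spl(\overline{r};c_1,c_2)$ coincides, on the tangent space $T_{[(F,V)]}\cP_{U_e}\cong H^1(F^*\otimes E)/V^*\otimes V$, with the pullback of the Mukai form on $\Spl(r;c_1,c_2)$ via $\pi$. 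Since $\pi$ is a Grassmann bundle its differential is surjective, so the pullback of the form on $U_e$ vanishes precisely along the relative (fiber) directions $T_{[(F,V)]}(\pi^{-1}(F))\cong V\otimes H^1(E^*)$ — and that these directions are isotropic is exactly Proposition~\ref{isot2}. Consequently the pulled-back form on $\cP_\cL$ vanishes identically if and only if it vanishes after restriction to $\cL_{U_e}$, which is to say if and only if $\cL$ is isotropic in $\Spl(r;c_1,c_2)$.

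This handles the isotropic equivalence, and I would state that $\beta_{|\cP_\cL}$ is a locally closed embedding by Lemma~\ref{betaemb} (together with Proposition~\ref{inj2} and Proposition~\ref{difBetainj}), so that the image $\ext^v_\cL$ is a smooth irreducible locally closed subspace on which the restricted form is the pullback of the form under consideration. For the Lagrangian statement I would invoke the dimension count of Proposition~\ref{dim2}: when $\dim\cL=\tfrac12\dim\Spl(r;c_1,c_2)$ one has $\dim\cP_\cL=\tfrac12\dim\Spl(\overline{r};c_1,c_2)$, so an isotropic subspace of exactly half the ambient dimension is automatically Lagrangian. Thus the Lagrangian case follows immediately from the isotropic case, in parallel with the deduction in Theorem~\ref{pfthm1}; no further geometric input is needed beyond the half-dimensionality.

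The main obstacle — and the only nonformal ingredient — is the comparison of the two symplectic forms, i.e.\ Proposition~\ref{restrict2-formsbis} and its underlying Lemma~\ref{eppur_commuta}. Everything hinges on the identity $\tr\!\big((M\circ B)\circ(N\circ B)\big)=\tr\!\big((B\circ M)\circ(B\circ N)\big)$, which reflects the sign-compatibility of the Yoneda/composition pairings on $H^1(E^*\otimes E)$ and $H^1(F^*\otimes F)$ under the maps induced by the extension class. Once this commutativity of trace pairings is in hand, the argument is purely formal: surjectivity of $d\pi$, isotropy of the fibers from Proposition~\ref{isot2}, and the dimension bookkeeping of Proposition~\ref{dim2}. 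I would therefore devote the bulk of the write-up to verifying that the explicitly constructed maps $V\otimes H^1(E^*)\to H^1(E\otimes F^*)/V^*\otimes V\to H^1(F\otimes F^*)$ are the ones entering the diagram, and then assemble the proof in essentially one line, ``arguing as in the proof of Theorem~\ref{pfthm1}.''
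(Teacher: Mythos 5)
Your proposal is correct and follows essentially the same route as the paper: the whole argument rests on Proposition \ref{restrict2-formsbis} (the two pulled-back $2$-forms agree on $T_{[(F,V)]}\cP_{U_e}$), the surjectivity of $d\pi$ for the Grassmann bundle $\pi:\cP_\cL\to\cL_{U_e}$, the embedding statement of Lemma \ref{betaemb}, and the dimension count of Proposition \ref{dim2} to pass from isotropic to Lagrangian. One minor remark: your appeal to Proposition \ref{isot2} is redundant (and slightly misphrased --- $\pi^*\omega$ automatically annihilates fiber directions once the two pullbacks are identified), but this does not affect the validity of the argument.
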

\begin{proof}
    We first prove the isotropic case. By Proposition \ref{restrict2-formsbis}, $\beta _{| \cP_{\cL}}$ is isotropic at a point $(F,V^*)\in \cP_\cL $ if and only if the image of its tangent space via the differential of the projection $\cP_{\cL} \to \Spl(r;c_1,c_2)$ is an isotropic subspace. That image is $T_{[F]}\cL$ since the projection is a Grassmannian bundle, and thus a submersion.

    Thus the pullback of the symplectic form on $\Spl(\overline{r};c_1,c_2)$ is zero at every point (i.e., $\beta_{\cL}$ is an isotropic immersion) if and only if $T_{[F]}\cL$ is isotropic for every $[F]\in \cL$ (i.e., $\cL$ is an isotropic subspace).

    The Lagrangian statement follows by combining the isotropic case with the dimension calculations in Proposition \ref{dim2}
\end{proof}

\begin{corollary}
    For any $\cL\subset \Spl(r;c_1,c_2)$ locally closed subspace, the locally closed subspace $\Ext^v_{\cL}\subset \Spl(v+r,c_1,c_2)$ is isotropic (respectively Lagrangian) if and only if $\cL$ is isotropic (respectively Lagrangian).
\end{corollary}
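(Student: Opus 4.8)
The plan is to observe that this corollary is nothing more than a reformulation of Theorem \ref{pfthm2}, phrased in terms of the image subspace $\Ext^v_\cL$ rather than in terms of the embedding $\beta_{|\cP_\cL}$. The bridge between the two formulations is supplied by the fact that $\beta$ is a locally closed embedding. So the first step I would take is to recall from Lemma \ref{betaemb} that $\beta:\cP_{U_e}\to\Spl(\overline{r};c_1,c_2)$ is a locally closed embedding. Restricting to $\cP_\cL=\pi^{-1}(\cL\cap U_e)$, it therefore induces an isomorphism $\cP_\cL\xrightarrow{\sim}\Ext^v_\cL$ onto its image, which is by definition $\Ext^v_\cL$. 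Since $\cP_\cL$ is smooth and irreducible, so is its image, and hence $\Ext^v_\cL$ is a bona fide subspace in the sense of this paper, to which the notions of isotropic and Lagrangian apply.

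The second step is to make explicit that, for a locally closed embedding $j:Z\hookrightarrow(M,\omega)$ into a symplectic algebraic space, the statement ``the image $j(Z)$ is an isotropic subspace'' means precisely $\omega|_{j(Z)}=0$, which is equivalent to the vanishing of the pullback $j^*\omega$, i.e.\ to $j$ being an isotropic immersion. The Lagrangian condition simply adds the numerical requirement $\dim j(Z)=\tfrac12\dim M$, and since $j$ is an isomorphism onto its image this depends only on $\dim Z$. Applying this with $j=\beta_{|\cP_\cL}$, the assertion ``$\beta_{|\cP_\cL}$ is an isotropic (resp.\ Lagrangian) locally closed embedding'' is literally identical to the assertion ``$\Ext^v_\cL$ is an isotropic (resp.\ Lagrangian) subspace of $\Spl(\overline{r};c_1,c_2)=\Spl(v+r;c_1,c_2)$''.

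Finally, I would invoke Theorem \ref{pfthm2}, which gives exactly the equivalence between $\cL$ being isotropic (resp.\ Lagrangian) in $\Spl(r;c_1,c_2)$ and $\beta_{|\cP_\cL}$ being an isotropic (resp.\ Lagrangian) locally closed embedding. Chaining this equivalence with the identification established in the second step yields the corollary. I do not expect any genuine obstacle here, as the statement is a formal consequence of Theorem \ref{pfthm2}; the only point deserving a word of care is the verification that $\Ext^v_\cL$ inherits the smoothness and irreducibility required of a subspace, and this is immediate once one uses that $\beta_{|\cP_\cL}$ is an embedding of the smooth irreducible space $\cP_\cL$.
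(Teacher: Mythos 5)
Your proof is correct and follows exactly the route the paper intends: the corollary is stated there without a separate proof, precisely because it is the formal reformulation of Theorem \ref{pfthm2} via the identification $\cP_\cL \cong \Ext^v_\cL$ furnished by the locally closed embedding of Lemma \ref{betaemb}. Your careful unpacking of why ``$\beta_{|\cP_\cL}$ is an isotropic (resp.\ Lagrangian) embedding'' is equivalent to ``$\Ext^v_\cL$ is an isotropic (resp.\ Lagrangian) subspace'' is exactly the implicit content of the paper's omitted argument.
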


%%%%%%%%%%%%%%%%%%%%%%%%%%%%%

\section{On the stability of generalized syzygy bundles}

In this section we will study the stability of the generalized syzygy bundles $S$ associated to a couple $(F,W)$. As we said in section 2, to establish the stability of a generalized syzygy bundle is an open question  of great interest closely related to classical problems in geometry and algebra, ranging from the $(N_p)$ properties in the sense of Green \cite{Gr} to questions of tight closure \cite{B}. When $W=H^0(F)$ asymptotically results are known while the situation drastically change when $W\subset H^0(F)$ (see, for instance, \cite{C}, \cite{CMM}, \cite{MM} and \cite{MS}). We will illustrate by means of an example  that the results strongly depend on the basis that we choose in $W$ and not only on its dimension. We will end this section with a question which naturally arises in this more general context.

\vskip 2mm
Let us first recall the definition and some key result about the (slope) stability of vector bundles.

\begin{definition}\ Let $(X,L)$ be a polarized smooth variety of dimension $d$. A vector bundle $E$ on $X$ is {\em $L-$stable} if for any subsheaf $F\subset E$ with $0<rk(F)<rk(E)$, we have 
\[
\mu_{L}(F):=\frac{c_1(F)L^{d-1}}{rk(F)}<\mu_{L}(E):=\frac{c_1(E)L^{d-1}}{rk(E)}. \]
\end{definition}

\vskip 2mm
The following result is a cohomological characterization of stability, and it will play a central role in the proof of our results in this section.

\begin{lemma}  \label{key} \cite[Lemma 2.1]{C} Let $(X,L)$ be a polarized smooth variety of dimension $d$. Let  $E$ be a vector bundle on $X$. Suppose  that for any integer $q$ and any line bundle $F $ on $X$ such that
$$ 0<q<rk(E) \quad \text{ and } \quad (F\cdot L^{d-1})\ge q\mu _L(E) $$
one has
$ H ^0(X,\bigwedge^q E\otimes F^{\vee})=0.
$
Then, $E$ is $L-$stable.
\end{lemma}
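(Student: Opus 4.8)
The plan is to prove the contrapositive: assuming $E$ is \emph{not} $L$-stable, I will produce an integer $q$ and a line bundle $F$ satisfying the two numerical conditions in the statement but with $H^0(X,\bigwedge^q E\otimes F^\vee)\neq 0$, contradicting the hypothesis.

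First I would fix a destabilizing subsheaf. If $E$ is not stable there is a coherent subsheaf $G\subset E$ with $0<q:=\rk(G)<\rk(E)$ and $\mu_L(G)\ge\mu_L(E)$. Replacing $G$ by its saturation in $E$ only increases $c_1(G)\cdot L^{d-1}$, since the saturation differs from $G$ by a sheaf supported in codimension $\ge 1$, so the resulting difference of first Chern classes is an effective class and pairs nonnegatively with $L^{d-1}$; hence this replacement does not decrease $\mu_L(G)$, and I may assume $G$ is saturated. Then $E/G$ is torsion free, and $G$, being a saturated subsheaf of a locally free sheaf, is reflexive; in particular $\det G$ is a line bundle with $c_1(\det G)=c_1(G)$.

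The heart of the argument is to descend to a rank-one object via exterior powers. Let $Z\subset X$ be the locus where $E/G$ fails to be locally free; since $E/G$ is torsion free, $Z$ has codimension $\ge 2$. On $U:=X\setminus Z$ the subsheaf $G$ is a subbundle of $E$, so $\bigwedge^q E|_U$ contains $\bigwedge^q(G|_U)=(\det G)|_U$ as a line subbundle, giving a nonzero morphism $(\det G)|_U\to \bigwedge^q E|_U$. Because $\bigwedge^q E$ is locally free and $\det G$ is a line bundle, the sheaf $(\det G)^\vee\otimes\bigwedge^q E$ is locally free, so this morphism extends uniquely across the codimension $\ge 2$ set $Z$ to a nonzero morphism $\det G\to\bigwedge^q E$ on all of $X$. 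Equivalently, one obtains a nonzero section $0\neq s\in H^0(X,\bigwedge^q E\otimes(\det G)^\vee)$.

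Finally I would set $F:=\det G$ and check the numerics: $0<q<\rk(E)$ by construction, and $$(F\cdot L^{d-1})=c_1(\det G)\cdot L^{d-1}=c_1(G)\cdot L^{d-1}=q\,\mu_L(G)\ge q\,\mu_L(E),$$ so the pair $(q,F)$ satisfies both hypotheses of the lemma and forces $H^0(X,\bigwedge^q E\otimes F^\vee)=0$, contradicting $s\neq 0$. Hence no destabilizing subsheaf exists and $E$ is $L$-stable. I expect the one genuinely delicate point to be the extension of the morphism $\det G\to\bigwedge^q E$ from $U$ across $Z$: this uses both the reduction to a saturated (hence reflexive) $G$, which guarantees that $Z$ has codimension $\ge 2$, and the local freeness of the target $\bigwedge^q E$, which lets sections of a locally free sheaf extend over a codimension $\ge 2$ subset. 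Everything else is routine intersection-theoretic bookkeeping.
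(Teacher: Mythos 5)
Your proof is correct. The paper itself does not prove this lemma at all---it is quoted verbatim from Coanda \cite[Lemma 2.1]{C}---and your argument is precisely the standard one underlying that citation: pass to the saturation of a destabilizing subsheaf $G$ of rank $q$ (which can only increase the slope), note that $\det G$ is a line bundle with $c_1(\det G)=c_1(G)$, obtain the nonzero morphism $\det G\to \bigwedge^q E$ on the open set where $E/G$ is locally free, and extend it across the codimension $\ge 2$ complement using normality of $X$ and local freeness of $\bigwedge^q E\otimes(\det G)^\vee$, contradicting the vanishing hypothesis with $F=\det G$.
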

A vector bundle $E$ satisfying the hypothesis of Lemma \ref{key} is said to be {\em cohomologically stable}. It is worthwhile to point out that any cohomological stable vector bundle on a polarized variety  $(X,L)$ is $L$-stable but not vice versa.

\vskip 2mm
For sake of completeness let us recall the asymptotic results about the stability of syzygy bundles associated to complete systems known so far.

\begin{proposition} \label{assymptoticresult} Let $X$ be a K3 surface and $L$ an ample line bundle on $X$. It holds:
\begin{itemize}
    \item[(1)] There is an integer $n_0\gg0$ such that for any integer $m\ge n_0$ the syzygy bundle $M_{mL}$ associated to $(\cO_C(mL),H^0(\cO_X(mL)))$ is $L$-stable; and 
    \item[(2)]  For any  $L$-stable vector bundle $F$ on $X$ of rank $r\ge 2$, there is an integer $n_0\gg 0$ such that for all integer $m\ge n_0$ the syzygy bundle $S$ associated to $(F(m), H^0(F(m))$ is $L$-stable.
\end{itemize}
\end{proposition}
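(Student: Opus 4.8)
The plan is to deduce both statements from the cohomological stability criterion of Lemma \ref{key}, applied to the syzygy bundle $S$ (resp.\ $M_{mL}$) with polarization $\cO_X(L)$, using that $L$-stability and $mL$-stability coincide since slopes merely scale. First I would record the numerics. Writing $S=S_m$ for the syzygy bundle of $(F(m),H^0(F(m)))$, we have $c_1(S)=-c_1(F(m))$ and $\rk(S)=\chi(F(m))-r$, so $c_1(S)\cdot L$ grows linearly in $m$ while $\rk(S)$ grows quadratically; hence $\mu_L(S)=c_1(S)\cdot L/\rk(S)$ is negative and tends to $0$ as $m\to\infty$, and likewise $\mu_L(M_{mL})\to 0^-$. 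By Lemma \ref{key} it therefore suffices to prove that for every $q$ with $0<q<\rk(S)$ and every line bundle $G$ on $X$ with $G\cdot L\ge q\mu_L(S)$ one has $H^0(X,\bigwedge^q S\otimes G^\vee)=0$.

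The engine of the argument is the family of exact sequences obtained by taking exterior powers of the defining sequence $0\to S\to W\otimes\cO_X\to F(m)\to 0$. In the line-bundle case (part (1), $F(m)=\cO_X(mL)$) these are the short exact sequences
\[
0\to\textstyle\bigwedge^q M_{mL}\to\bigwedge^q W\otimes\cO_X\to\bigwedge^{q-1}M_{mL}\otimes\cO_X(mL)\to 0,
\]
and twisting by $G^\vee$ and taking cohomology yields an inclusion $H^0(\bigwedge^q M_{mL}\otimes G^\vee)\hookrightarrow\bigwedge^q W\otimes H^0(G^\vee)$ together with a connecting map into $H^0(\bigwedge^{q-1}M_{mL}\otimes\cO_X(mL)\otimes G^\vee)$. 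When $G\cdot L>0$ the bundle $G^\vee$ is non-effective, so $H^0(G^\vee)=0$ and the vanishing is immediate; in the remaining range $q\mu_L(S)\le G\cdot L\le 0$ I would induct on $q$, at each step replacing $G$ by $G(-mL)$ (which makes $G\cdot L$ strictly more negative) and reducing to the injectivity of the associated Koszul differential into $H^0(\bigwedge^{q-1}M_{mL}\otimes\cO_X(mL)\otimes G^\vee)$. For part (2) the quotient $F(m)$ has rank $r$, so instead of short exact sequences one uses the exterior-power filtration of $\bigwedge^q(W\otimes\cO_X)$ whose graded pieces are $\bigwedge^{q-i}S\otimes\bigwedge^i F(m)$; unwinding it reduces the required vanishing to that of cohomology of twists $\bigwedge^i F(m)\otimes(\text{line bundle})$, controlled for $m\gg0$ by Serre vanishing together with the $L$-stability of $F$.

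The main obstacle is uniformity: Lemma \ref{key} must be checked simultaneously for all $q$ and all line bundles $G$ in the allowed numerical range, and a priori there are infinitely many such $G$ while $n_0$ could depend on them. I would resolve this by a boundedness argument. A nonzero section of $\bigwedge^q S\otimes G^\vee$ is a nonzero map $G\to\bigwedge^q S\hookrightarrow\bigwedge^q W\otimes\cO_X$, realising $G$ as a rank-one subsheaf of a trivial bundle; its saturation then maps nonzero to $\cO_X$, forcing $G^\vee$ effective and hence $G\cdot L\le 0$. Combining $q\mu_L(S)\le G\cdot L\le 0$ with the Hodge index theorem confines the numerical class of any $G$ that could contribute a section, reducing the verification to finitely many classes for which the asymptotic vanishing above applies with a single $n_0$. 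I expect this boundedness step, and the precise Koszul-injectivity vanishing it feeds, to be the technical heart. As an alternative for part (1), one may instead observe that under the projectively normal embedding $X\hookrightarrow\PP^N$ given by $|mL|$ one has $M_{mL}=\Omega^1_{\PP^N}(1)|_X$, and invoke a restriction theorem of Flenner--Mehta--Ramanathan type to transfer the known stability of $\Omega^1_{\PP^N}(1)$ to $X$.
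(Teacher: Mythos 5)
Your proposal has genuine gaps: it is a strategy outline whose decisive steps are deferred, and several of the reductions it leans on do not work. First, for calibration: the paper does not reprove this proposition at all; it cites \cite[Theorem A]{ELM} for (1) and \cite[Theorem 1.1]{BP} for (2), which are substantial theorems in their own right. Your route through Lemma \ref{key} aims at cohomological stability, which (as the paper itself remarks) is strictly stronger than $L$-stability; so even if every reduction worked, you would still owe a proof that this stronger property holds for $M_{mL}$ and for $S$, and that is exactly what you postpone as ``the technical heart''. The groups you must kill, $H^0(\bigwedge^q M_{mL}\otimes G^\vee)$ with $G^\vee$ effective (including $G=\cO_X$), are twisted Koszul-cohomology groups; their vanishing on a K3 surface is not a formal consequence of the exterior-power sequences, and no argument for it is given. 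For part (2) the reduction points the wrong way: you propose to control the graded pieces $\bigwedge^{q-i}S\otimes\bigwedge^{i}F(m)$ ``by Serre vanishing'', but Serre vanishing kills higher cohomology and makes the $H^0$'s \emph{grow} with $m$, whereas what you need is vanishing of $H^0$'s of subsheaves, i.e.\ injectivity of Koszul-type differentials.

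Second, the boundedness step that is supposed to give uniformity in $m$ is incorrect as stated. From a nonzero section you correctly conclude that $D:=-G$ is effective with $0\le D\cdot L\le q\,|\mu_L(S_m)|$; but $q$ ranges up to $\rk(S_m)-1$, which grows like $rm^2L^2/2$, so the bound on $D\cdot L$ can be as large as roughly $|c_1(S_m)\cdot L|\sim rmL^2$, which grows linearly in $m$. Hence the set of classes to be checked is finite for each fixed $m$ but grows with $m$: there is no single finite list of classes handled by one $n_0$, and the Hodge index theorem does not remove this. This unbounded range of potential destabilizers is precisely why the cited proofs in \cite{ELM} and \cite{BP} need genuinely geometric input (restriction to curves and Butler-type theorems) rather than a formal Koszul computation. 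Finally, your fallback for (1) fails: Flenner/Mehta--Ramanathan-type theorems restrict stable bundles to \emph{general} complete-intersection divisors of large degree inside the ambient variety; they say nothing about restricting $\Omega^1_{\PP^N}(1)$ to a fixed surface $X\hookrightarrow\PP^N$ of codimension $N-2$ embedded by $|mL|$, which is neither a divisor nor a general complete intersection. The correct move at the level of this paper is the paper's own: invoke \cite{ELM} for (1) and \cite{BP} for (2).
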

\begin{proof} (1) See \cite[Theorem A]{ELM}.

(2) It follows from \cite[Theorem 1.1]{BP}.
\end{proof}

The following example shows that for non-complete systems (i.e. $W\subset H^0(\cF)$) the  problem is much more subtle.

\begin{example} Let $X\subset \PP^3$ be a smooth  quartic surface with $Pic(X)\cong \ZZ \cong \langle \cO_X(1) \rangle $. Set $L:=\cO_X(7)$, $L':=\cO_X(1)$, and fix homogeneous coordinates in $\PP^3$: $x,y,z,t$. We consider two subspaces $$W_1,W_2\subset H^0(\cO_X(7))\subset H^0(\PP^3,\cO_{\PP^3}(7))=\langle x^iy^jz^nt^m/i+j+n+m=7 \rangle $$
where 
$$ W_1=\langle x^{7}, y^{7}, z^{7},t^{7},x^6y \rangle \text{ and }
$$
$$ W_2=\langle x^{7}, y^{7}, z^{7},t^{7},x^2y^2z^2t \rangle .
$$
The syzygy bundles $S _i$ associated to $(\cO_X(7),W_i)$, $i=1,2$ sit in the short exact sequences
\begin{equation}\label{w1w2}
0 \to S_i \to W_i\otimes \cO_X \to \cO_X(7) \to 0 \quad \text{ for } i=1,2.
\end{equation}
Therefore, we have $\rank(S _i)=4$, $c_1(S_i)=-7L'$ and $\mu _{L'}(S _i)=\frac{-7L'^2}{4}=-7$. Moreover, by Lemma \ref{simple}, $S_1$ and $S_2$ are simple.

\noindent{\em Claim:} $S_1$ is not $L'$-stable while $S_2$ is $L'$-stable.

\noindent{\em Proof of the Claim:} Since the generators of $W_1$ have a linear syzygy, we have $H^0(S_1(L'))\ne 0$ and we get a line subbundle $\cO_X(-1) \subset S_1$ with $\mu _{L'}(\cO_X(-1))=-L'^2=-4>-7=\mu _{L'}(S _1)$ which implies that $S_1$ is not $L'$-stable.

On the other hand, using the exact sequence (\ref{w1w2}), its exterior powers,  the equalities 
$$ \begin{array}{rcl} \rank (\bigwedge^qS_i) & = & \binom{\rank(S_i)}{q} \\
\mu _{L'}(\bigwedge^qS_i) & = & q \mu _{L'}(S _i),
\end{array}
$$
in order to prove that $S_2$ is cohomologically stable (and hence $L'-$stable by Lemma \ref{key}) it is enough to verify that
\[
\begin{array}{ccc}
H^0(S_2(L'))) & = & 0 \\
H^0((\bigwedge^2 S_2(3L'))) & = & 0 \\
H^0((\bigwedge^3S_2(5L'))) & = & 0.
\end{array}
\]
This is a straightforward explicit calculation which we omit.
\end{example}

Since $L-$stability is an open condition, once we know the existence of a subspace $W\subset H^0(\cO_X(a))$ such that the associated syzygy bundle is $L-$stable, then the same works for any subspace $W$ in an open subset of $Gr(w,H^0(\cO_X(a)))$.
A series of examples where this holds is described in the following Proposition.

\begin{proposition} Let $X\subset  \PP^3$ be a smooth quartic surface with $Pic(X)\cong \ZZ \cong \langle \cO_X(1) \rangle $ and set  $L:=\cO_X(1)$. For any integers $2\le a\le 3$ and  $3\le w \le \binom{a+3}{3}$  there is a subspace $W\subset H^0(\cO_X(a))$  of dimension $w$ such that the syzygy bundle $S $ associated to $(\cO_X(a),W)$  is $L$-stable.

\end{proposition}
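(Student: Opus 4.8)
The plan is to verify the existence of a stable syzygy bundle for each of the finitely many cases $(a,w)$ with $a\in\{2,3\}$ and $3\le w\le \binom{a+3}{3}$, using the cohomological stability criterion of Lemma \ref{key}. Since $\Pic(X)\cong\ZZ\langle\cO_X(1)\rangle$, any line subbundle destabilizing $S$ must be of the form $\cO_X(-k)$ for some positive integer $k$, and more generally any line bundle $F$ appearing in the criterion is $\cO_X(m)$ for $m\in\ZZ$. The slope of $S$ is $\mu_L(S)=\frac{-aL^2}{w-1}=\frac{-4a}{w-1}$, so the range of exponents $q$ and integers $m$ that must be checked is small and completely explicit. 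Thus the whole statement reduces to a finite collection of vanishing statements $H^0(\bigwedge^q S\otimes\cO_X(m))=0$.

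First I would reduce these vanishings to computations on $\PP^3$. Taking the defining sequence $0\to S\to W\otimes\cO_X\to\cO_X(a)\to 0$, its exterior powers give exact sequences expressing $\bigwedge^q S$ in terms of $\bigwedge^\bullet(W\otimes\cO_X)$ and twists $\cO_X(-ja)$; tensoring by $\cO_X(m)$ and taking cohomology, the needed vanishing follows from control of $H^0(\cO_X(m-ja))$ together with the behavior of the Koszul-type differentials induced by the chosen basis of $W$. For small $a$ and $m$ these groups are either zero or can be identified, via the restriction sequence $0\to\cO_{\PP^3}(d-4)\to\cO_{\PP^3}(d)\to\cO_X(d)\to 0$, with explicit spaces of forms on $\PP^3$; the quartic equation of $X$ enters only through this restriction.

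The key step, and the one I expect to be the main obstacle, is to exhibit for each pair $(a,w)$ a specific subspace $W$ whose generators have \emph{no low-degree linear syzygies}, so that the evaluation-induced maps are as injective as possible and the relevant $H^0$'s vanish. As the preceding example with $W_1$ versus $W_2$ shows, stability is genuinely sensitive to the choice of basis and not merely to $\dim W$: a subspace spanned by monomials sharing many common factors (like $x^7,x^6y$) produces destabilizing syzygies, whereas a sufficiently generic or spread-out choice of monomials does not. So the real content is a careful selection of monomial bases in $H^0(\cO_X(a))$, analogous to the choice of $W_2$, together with the explicit verification that the finitely many groups $H^0(\bigwedge^q S\otimes\cO_X(m))$ vanish. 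Once one such $W$ is found for each $(a,w)$, the conclusion for an open subset of $Gr(w,H^0(\cO_X(a)))$ is automatic by the openness of $L$-stability.

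Finally, I would organize the verification by the two values of $a$ separately, since for $a=2$ the bundle $S$ has rank $w-1$ with $3\le w\le 10$ and for $a=3$ with $3\le w\le 20$, and within each case by increasing $q$, reusing the vanishing at lower $q$ to feed the cohomology sequences for higher $q$. The computations are of the same elementary nature as those asserted (but omitted) in the example, and I would present one representative case in detail and indicate that the remaining finitely many cases are entirely analogous.
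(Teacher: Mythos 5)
Your overall frame (reduce to the cohomological criterion of Lemma \ref{key}, note that $\Pic(X)\cong\ZZ$ makes the set of required vanishings finite and explicit, and pass between $X$ and $\PP^3$ via the restriction sequence $0\to\cO_{\PP^3}(d-4)\to\cO_{\PP^3}(d)\to\cO_X(d)\to 0$) is the same skeleton the paper uses. But there is a genuine gap: the step you yourself identify as ``the key step'' --- exhibiting, for \emph{every} pair $(a,w)$ with $3\le w\le\binom{a+3}{3}$, a concrete subspace $W$ and verifying the vanishings $H^0(\bigwedge^q S\otimes\cO_X(m))=0$ --- is never carried out, and it is not a routine finite check. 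For $a=3$, $w=20$ the bundle $S$ has rank $19$, so one must control $H^0(\bigwedge^q S(m_q))$ for $q=1,\dots,18$; asserting that these computations are ``entirely analogous'' to the rank-$4$ example and presenting ``one representative case'' is precisely where the mathematical content lives. The claim that a ``sufficiently generic or spread-out choice of monomials'' works for all $(a,w)$ in the stated range is exactly the hard combinatorial theorem that must be proved, not an observation one can defer.

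The paper closes this gap differently: it does not construct $W$ by hand for $w>3$, but invokes \cite[Theorem 4.6]{MM}, which guarantees for each such $w$ a subspace $W\subset H^0(\cO_{\PP^3}(a))$ whose syzygy bundle $\Sigma$ on $\PP^3$ is \emph{cohomologically} stable, and then transfers cohomological stability from $\Sigma$ to $S\cong\Sigma\otimes\cO_X$ on the quartic. The transfer is the actual new argument: the three-row commutative diagram comparing the exterior-power sequences of $\Sigma$, of $\Sigma(-4)$, and of $S$, together with induction on $q$, shows that the required $H^0$ vanishings on $X$ follow from those on $\PP^3$ (the case $w=3$ is handled separately by choosing three forms with no linear syzygy, so that $H^0(S(1))=0$). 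If you want to salvage your version, you should either cite \cite[Theorem 4.6]{MM} (or reprove it) to obtain cohomological stability on $\PP^3$ and then add the restriction-plus-induction argument, or else accept that your route requires an explicit, possibly computer-assisted, verification of all the vanishings for each of the finitely many $(a,w)$, which as written is only a plan. The final remark in your proposal (openness of stability in $Gr(w,H^0(\cO_X(a)))$) is correct but is not needed for the statement itself, which only asks for existence of one $W$.
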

\begin{proof} First of all we observe that for $2\le a\le 3$, $H^0(\cO_{\PP^3}(a))\cong H^0(\cO_X(a))$ and we will analyze separately the case $w=3$ and $w>3$.

For $w=3$ we take $W=\langle f_1,f_2,f_3\rangle \subset H^0(\cO_{\PP^3}(a))\cong H^0(\cO_X(a))$ being $f_i$ three forms of degree $a$ without linear syzygies. We construct the rank 2 syzygy bundle $S$ on $X$ associated to $(\cO_X(a),W)$:
$$
0 \to S \to W\otimes \cO_X \to \cO_X(a)\to 0
$$
verifying $H^0(S(1))=0$. Applying Lemma \ref{key} we get that $S$ is $L$-stable.

Assume $w>3$. By \cite[Theorem 4.6]{MM},  there is a subspace $W\subset H^0(\cO_{\PP^3}(a))\cong H^0(\cO_X(a))$ such that the syzygy bundle $\Sigma $ on $\PP^3$ associated to $(\cO_{\PP^3}(a),W)$ is cohomologically stable. Call $S$  the syzygy bundle $\Sigma $ on $X$ associated to $(\cO_X(a),W)$. We clearly have that $S$ is isomorphic to the restriction of $\Sigma $ to $X$, i.e. $S\cong \Sigma \otimes \cO_X$ and, for $1\le q\le w-1$, we have the commutative diagram:
$$
 \xymatrix{ & 0 \ar[d] &0  \ar[d] & 0 \ar[d] \\
0 \ar[r] &  \bigwedge^q\Sigma (-4)  \ar[r] \ar[d] &  \bigwedge^qW\otimes \cO_{\PP^3}(-4)  \ar[r] \ar[d] & \bigwedge^{q-1}\Sigma (a-4)  \ar[r] \ar[d] & 0 \\
0 \ar[r] & \bigwedge^q\Sigma   \ar[r] \ar[d] & \bigwedge^q W\otimes \cO_{\PP^3}  \ar[r] \ar[d] & \bigwedge^{q-1}\Sigma (a)  \ar[r] \ar[d] & 0 \\
0 \ar[r] & \bigwedge^qS  \ar[r] \ar[d] &  \bigwedge^qW\otimes \cO_{X}  \ar[r] \ar[d] & \bigwedge^{q-1}S(a)  \ar[r] \ar[d] & 0 \\
& 0 & 0 & 0
}$$
A straightforward computation using induction on $q$, the cohomological stability of $\Sigma $ and the exact cohomology sequences coming from the above commutative diagram gives us that $S$ is cohomologically stable which proves what we want.
\end{proof}
We end the paper with a question which naturally arises from our work.

\begin{question}
Let us fix $L$ an ample divisor on a K3 surface and a rank $r$, $L$-stable vector bundle $F$ on $X$. For which integers $w$ with $2+r\le w\le \dim H^0(X,F)$ is there a $w-$dimensional subspace $W\subset H^0(F)$ such that $W\otimes \cO_X \twoheadrightarrow F$ and the syzygy bundle $S$ is $L$-stable?
\end{question}

Again  the existence of a vector subspace $W\subset H^0(F)$ such that $W\otimes \cO_X \twoheadrightarrow F$ and the syzygy bundle $S$ is $L$-stable implies the existence of an open subset in $Gr(w,H^0(F))$ where the same holds.

\vskip 2mm
Finally it is worthwhile to point out that analogous phenomena appear for extension bundles.

%%%%%%%%%%%%%%%%%%%%%%%%%%%%%%%%%%%%%%%%%%%%%%%%%%%%%%%%%%%%%%%%%%%%%%%%%%%%%%%%%%%

\end{document}